\documentclass[reqno]{amsart}
\usepackage{amssymb,amsmath,amsthm,mathtools,mathrsfs,xcolor}
\usepackage[english]{babel}
\usepackage{cite}
\RequirePackage[colorlinks,citecolor=blue,urlcolor=blue]{hyperref}

\DeclareMathOperator{\Var}{Var}

\DeclareMathOperator{\diam}{diam}

\newcommand{\R}{\mathbb R}
\newcommand{\N}{\mathbb N}
\newcommand{\E}{\mathbb E}
\renewcommand{\P}{\mathbb P}
\renewcommand{\d}{\mathrm d}
\newcommand{\e}{\mathrm e}
\newcommand{\1}{\mathbf 1}

\newcommand{\inner}[2]{\langle #1,#2\rangle}

\numberwithin{equation}{section}
\newtheorem{theorem}{Theorem}[section]
\newtheorem{proposition}[theorem]{Proposition}
\newtheorem{lemma}[theorem]{Lemma}
\newtheorem{corollary}[theorem]{Corollary}
\theoremstyle{definition}

\title[Stochastic heat equation]{Sharp regularity and small ball probabilities for the stochastic heat equation on bounded domains}

\author[J. Hu and C. Y. Lee]{Jingwu Hu \and Cheuk Yin Lee}
\address{Department of Statistics and Probability, Michigan State University, East Lansing, MI 48824, United States}
\email{hujingwu@msu.edu}
\address{School of Science and Engineering, The Chinese University of Hong Kong (Shenzhen), Longgang, Shenzhen, Guangdong 518172, China}
\email{leecheukyin@cuhk.edu.cn}

\subjclass[2020]{60H15; 60G15; 60G17; 60G60}
\keywords{Stochastic heat equation; Gaussian noise; H\"older regularity; strong local nondeterminism; modulus of continuity; law of the iterated logarithm; small ball probability}

\begin{document}

\begin{abstract}
We consider the stochastic heat equation
\[
	\partial_t u(t,x) = \Delta u(t,x) + \dot{W}_\alpha(t,x)
\]
on a bounded Lipschitz domain with zero Dirichlet boundary condition and zero initial condition, where $\dot{W}_\alpha$ is a Gaussian noise that is white in time and whose spatial covariance is the kernel of $(-\Delta)^{-\alpha}$ with $\alpha>0$. We prove that a unique pointwise defined mild solution exists if and only if $\alpha>d/2-1$. In this case, if in addition the domain is $C^2$, we also establish spatial and temporal H\"older regularity of the solution.
When $d/2-1<\alpha<d/2$, we show that the H\"older exponents are optimal and obtain exact local and uniform moduli of continuity, a Chung-type law of the iterated logarithm, and sharp small ball probability estimates for the solution.
\end{abstract}

\maketitle

\section{Introduction}

Let \(D\subset\R^d\) be a bounded domain. 
As usual, a domain is understood to be a nonempty open connected set. 
We fix a parameter $\alpha > 0$ and consider the stochastic heat equation with Dirichlet boundary condition:
\begin{equation}\label{eq:SHE}
	\begin{cases}
		\partial_t u(t,x)=\Delta u(t,x)+\dot W_\alpha(t,x),
    	& t>0,\ x\in D,\\
		u(t,x)=0, & t>0,\ x\in\partial D,\\
		u(0,x)=0, & x\in D.
	\end{cases}
\end{equation}
Formally, the noise $\dot W_\alpha$ is given by $\dot W_\alpha = (-\Delta)^{-\alpha/2} \dot W$, where $\dot W$ is a space-time white noise on $[0,\infty) \times D$ and $-\Delta$ denotes the positive Dirichlet Laplacian on $D$.
More precisely, $\dot W_\alpha$ is defined as a centered generalized Gaussian field on a complete probability space $(\Omega, \mathcal{F}, \P)$, indexed by $(t,x) \in [0,\infty) \times D$, with covariance
\begin{align}\label{W:corr:G}
    \E[\dot W_\alpha(t,x) \dot W_\alpha(s,y)] = \delta_0(t-s) G_{\alpha}(x,y),
\end{align}
where $G_{\alpha}$ is the kernel of the operator $(-\Delta)^{-\alpha}$.
As in standard theory of stochastic partial differential equations (SPDEs), the solution to \eqref{eq:SHE} is interpreted as mild solution \cite{Walsh,Dalang,DPZ}; see \eqref{eq:mild} below.



If $\dot{W}_\alpha$ is space-time white noise (i.e., $\alpha=0$), then
\eqref{eq:SHE} has a pointwise solution only when $d=1$.
In this case, sample path properties of the linear and nonlinear stochastic heat equation on a bounded interval have been studied in \cite{HL26}.
In this paper, we will focus on the linear case for $\alpha>0$.
%
%
%
%
%
When $D=\R^d$, it is common to consider spatially-homogeneous Gaussian noise which is white in space or has spatial covariance given by the Riesz kernel $|x-y|^{-\beta}$, where $0<\beta<d$ (see \cite{Dalang}).
Although the Riesz kernel may also be used on bounded domains \cite{CCL24}, it is often more natural to use spatial covariances that are similar to the one in \eqref{W:corr:G} on general bounded domains, such as $d$-dimensional torus \cite{CCV}, Riemannian manifolds \cite{CO25}, and metric measure spaces \cite{BCHOTW}.
In fact, when $0<\alpha<d/2$, the covariance $G_\alpha(x,y)$ behaves like the Riesz kernel with $\beta = d-2\alpha$ (see Lemma \ref{lem:G:bd} below).
Since the noise $\dot{W}_\alpha(t,x)$ is by definition a generalized Gaussian field and may not be a pointwise function, an important question is to determine when there exists a pointwise solution and determine regularity of the solution.

There is a large literature on H\"older regularity of linear and nonlinear stochastic heat equations and related SPDEs on $\R_+ \times \R^d$; see, e.g., \cite{Walsh,CD14,SS00,SS02,BC18,HL19,BJQ16,BQS19,KS23,C17,L17,DS26}.
In particular, \cite{KS23} studied optimal H\"older regularity of SPDEs on $\R_+ \times \R^d$ with additive Gaussian noise, including the stochastic heat equation.
H\"older and Sobolev regularity of stochastic heat equations and parabolic SPDEs on bounded domains have also been studied in \cite{CCV,DKZ,SV,NV09,DPZ,DS26}, but there are not many explicit results about optimality of H\"older regularity on general domains.

The goal of this paper is to study sharp H\"older regularity of the solution to \eqref{eq:SHE} on bounded domains and establish further sample path properties including exact moduli of continuity, Chung's law of the iterated logarithm, and sharp small ball probability estimates.

Sample path properties of Gaussian random fields and SPDEs have been studied extensively. Exact local and uniform moduli of continuity and Chung's laws of the iterated logarithm for Gaussian processes and Gaussian random fields have been established in \cite{MWX13,LX23,LZ26,LX10}, while similar sample path results for stochastic heat equations and other related SPDEs can be found in \cite{FKM15,HSWX20,Chen23,GSWX25,WX24,HL26,LX19,CLX26}.
In many cases, the local modulus functions are given by the law of the iterated logarithm (LIL) and the uniform modulus functions are similar to L\'evy's modulus of continuity for Brownian motion. 
These moduli of continuity specify the upper envelope ($\limsup$) for the increments at small scales.
The lower envelope ($\liminf$) for the increments is usually characterized by Chung's LIL, originally due to Chung \cite{Chung} for Brownian motion.
It is a well-known fact that Chung's LIL is closely related to small ball probabilities \cite{LS01}.
We refer to \cite{AJM21,C24,C25,C26,KKM24,H24,FJK23,HL26} for small ball probability results for stochastic heat equations and related SPDEs.

\subsection{Main results}

We first present a necessary and sufficient condition for the existence of a unique random-field solution to \eqref{eq:SHE}. 
Such condition is also known as Dalang's condition \cite{Dalang}.
According to standard SPDE theory \cite{Walsh, Dalang, DPZ}, \eqref{eq:SHE} has a unique pointwise mild solution given by
\begin{equation}\label{eq:mild}
    u(t,x)=\int_0^t\int_D P_{t-r}(x,y)\,W_\alpha(\d r,\d y)
\end{equation}
if and only if the above Wiener integral process is well defined pointwise, where $P$ denotes the Dirichlet heat kernel.

\begin{theorem}[Dalang condition]
\label{thm:exist}
Suppose $D \subset \R^d$ is a bounded Lipschitz domain.
Then \eqref{eq:mild} is pointwise defined for every $(t,x) \in (0,\infty)\times D$ if and only if
\begin{align}\label{Dalang}
	\alpha>d/2-1.
\end{align}
\end{theorem}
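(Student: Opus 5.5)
The plan is to compute the variance of the Wiener integral explicitly through the Dirichlet eigenfunction expansion. Let $\{\lambda_n,\varphi_n\}$ be the eigenpairs of $-\Delta$ on $D$, with $\varphi_n$ orthonormal in $L^2(D)$. Then
\[
P_t(x,y)=\sum_n e^{-\lambda_n t}\varphi_n(x)\varphi_n(y), \qquad G_\alpha(x,y)=\sum_n \lambda_n^{-\alpha}\varphi_n(x)\varphi_n(y).
\]
The integral \eqref{eq:mild} is well defined iff
\[
\int_0^t \langle P_{r}(x,\cdot),(-\Delta)^{-\alpha}P_r(x,\cdot)\rangle_{L^2(D)}\,\d r<\infty .
\]
By orthonormality this quantity equals
\[
\sum_n \lambda_n^{-\alpha}\varphi_n(x)^2\int_0^t e^{-2\lambda_n r}\,\d r=\sum_n \frac{1-e^{-2\lambda_n t}}{2\lambda_n^{1+\alpha}}\,\varphi_n(x)^2 .
\]
Since $1-e^{-2\lambda_n t}\in[1-e^{-2\lambda_1 t},1]$, the variance is finite iff
\[
S(x):=\sum_n \lambda_n^{-1-\alpha}\varphi_n(x)^2<\infty .
\]
Equivalently, $S(x)=G_{1+\alpha}(x,x)=\int_0^\infty \frac{s^{\alpha}}{\Gamma(1+\alpha)}P_s(x,x)\,\d s$ by the subordination formula $\lambda^{-\beta}=\Gamma(\beta)^{-1}\int_0^\infty s^{\beta-1}e^{-\lambda s}\,\d s$ and Tonelli, since all terms are nonnegative.

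Next I will reduce finiteness of this integral to the small-time behaviour of the on-diagonal heat kernel. For large $s$, $P_s(x,x)\le C e^{-\lambda_1 s}$, for instance by the semigroup property and $P_1(x,x)\le C$. So the tail $\int_1^\infty$ is always finite. For small $s$, I use domination by the free heat kernel: $P_s(x,x)\le (4\pi s)^{-d/2}$. For a lower bound, fix $x\in D$ and a ball $B(x,2r)\subset D$. Domain monotonicity gives $P_s(x,x)\ge P^{B}_s(x,x)\ge c\,s^{-d/2}$ for $s\le r^2$; this follows from the Dirichlet heat kernel on a ball being comparable to the Gaussian near the center, equivalently from $\P_x(\text{BM exits }B\text{ before }s)\to0$. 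Hence, for each fixed $x$,
\[
S(x)<\infty \iff \int_0^1 s^{\alpha-d/2}\,\d s<\infty \iff \alpha>d/2-1.
\]

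The main technical point is the rigorous identification of the Wiener integral's variance with the spectral sum, including the case where it is infinite. I will handle this by approximating $P_r(x,\cdot)$ in the reproducing space of the noise, whose norm is $\|f\|^2=\int_0^t\langle f(r),(-\Delta)^{-\alpha}f(r)\rangle\,\d r$ on the completion. I will show that $(r,y)\mapsto P_{r}(x,y)$ lies in this space iff the norm is finite, using truncation of the eigenexpansion and monotone convergence. Since all quantities are nonnegative sums, the divergence case follows from monotone convergence as well.

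Lipschitz regularity of $D$ is used only to guarantee the discrete spectrum, $\varphi_n\in L^2$, and well-posedness of $(-\Delta)^{-\alpha}$; boundedness of $D$ alone essentially suffices here. The conclusion holds for every $x\in D$ and every $t>0$, since $t$ enters only through the harmless factor $1-e^{-2\lambda_n t}$.
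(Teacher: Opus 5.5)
Your argument is correct, and it shares the paper's skeleton. In both, membership of $p_{t,x}$ in $\mathcal{H}_\alpha(D)$ reduces to finiteness of an integral of $P_s(x,x)$, and that is decided by $P_s(x,x)\asymp s^{-d/2}$ as $s\to 0$. The two main steps are carried out differently, though.

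The paper keeps the double integral $\Gamma(\alpha)^{-1}\int_0^t\int_0^\infty r^{\alpha-1}P_{2s+r}(x,x)\,\d r\,\d s$, which comes from $G_\alpha$ and the semigroup property. It bounds this from above with Riahi's two-sided estimate for Lipschitz domains (Lemma~\ref{lem:P}) and Lemma~\ref{lem:r-integral}. It bounds it from below by restricting to $r<2s$, which brings in the factor $f_1(x)^2$.

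You instead integrate out the time variable exactly in the eigenfunction expansion. This traps the squared norm between $\frac{1-e^{-2\lambda_1 t}}{2}\,G_{1+\alpha}(x,x)$ and $\frac12\, G_{1+\alpha}(x,x)$, where $G_{1+\alpha}(x,x)=\Gamma(1+\alpha)^{-1}\int_0^\infty s^{\alpha}P_s(x,x)\,\d s$. The paper uses this reduction only as an upper bound, in the proof of Lemma~\ref{lem:var:u:2}. After it, you need only elementary facts: domination by the free kernel $(4\pi s)^{-d/2}$, domain monotonicity on an interior ball, and $P_s(x,x)\le e^{-\lambda_1(s-1)}P_1(x,x)$ for $s\ge 1$.

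What your route buys is simplicity and generality. No boundary heat kernel estimate is needed, so the criterion holds for essentially any bounded domain, and it is transparent that $t$ plays no role. What the paper's route buys is uniform quantitative information that is reused later. For example, the same computation gives the bound $\lesssim t^{\theta}$ for small $t$, uniformly in $x$ (Lemma~\ref{lem:var:u:1}). Your replacement of $1-e^{-2\lambda_n t}$ by $1$ discards this.

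One small precision concerns the ball lower bound. Let $X$ be the Brownian motion generated by $\Delta$, $p_u$ the free heat kernel, and $\tau$ the exit time from $B=B(x,2r)$. The clean justification is Hunt's formula $P^B_s(x,x)=(4\pi s)^{-d/2}-\E_x\bigl[p_{s-\tau}(X_\tau,x);\,\tau<s\bigr]$. Since $|X_\tau-x|=2r$, the subtracted term is $O(r^{-d})=o(s^{-d/2})$. The fact that the exit probability tends to $0$ does not, on its own, control a density.
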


We obtain H\"older regularity for the solution when in addition $D$ is $C^2$.

\begin{theorem}[H\"older regularity]\label{thm:reg}
Suppose $D \subset \R^d$ is a bounded $C^2$ domain and \eqref{Dalang} holds.
Then $\{u(t,x)\}_{t \ge 0, x \in D}$ has a version that is locally H\"older continuous of order $\beta_0$ in $t$ and order $\beta_1$ in $x$ for any $\beta_0 \in (0, \frac{\theta\wedge 1}{2})$
and $\beta_1 \in (0,\theta\wedge 1)$, where
\begin{align}\label{theta}
	\theta=\alpha-d/2+1.
\end{align}
\end{theorem}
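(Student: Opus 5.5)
Since $u$ is a centered Gaussian field, the plan is to bound the second moments of its increments and then apply Kolmogorov's continuity theorem. By Gaussian hypercontractivity, second-moment bounds give all moments. It therefore suffices to show that for every compact $K\subset D$ and $T>0$ there is $C$ with
\begin{align*}
\E|u(t,x)-u(s,y)|^2 \le C\big(|t-s|^{\theta\wedge 1-\epsilon}+|x-y|^{2(\theta\wedge 1)-\epsilon}\big),
\end{align*}
uniformly over $s,t\in[0,T]$ and $x,y\in K$, for any small $\epsilon>0$. (Near the boundary one could work on all of $D$, but local continuity is what is claimed.)

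\textbf{Spectral representation.} Let $(\lambda_k,\phi_k)$ be the Dirichlet eigenpairs, so that $P_t(x,y)=\sum_k e^{-\lambda_k t}\phi_k(x)\phi_k(y)$ and $G_\alpha(x,y)=\sum_k\lambda_k^{-\alpha}\phi_k(x)\phi_k(y)$. By \eqref{W:corr:G} and the Wiener isometry,
\begin{align*}
\E[u(t,x)u(s,y)]=\int_0^{s\wedge t}\!\int_D\!\int_D P_{t-r}(x,z)G_\alpha(z,w)P_{s-r}(y,w)\,\d z\,\d w\,\d r
=\sum_k \lambda_k^{-\alpha}\frac{e^{-\lambda_k|t-s|}-e^{-\lambda_k(t+s)}}{2\lambda_k}\phi_k(x)\phi_k(y).
\end{align*}
Equivalently, $u(t,x)=\sum_k \lambda_k^{-\alpha/2}\phi_k(x)\int_0^t e^{-\lambda_k(t-r)}\,\d B_k(r)$ with independent Brownian motions $B_k$.

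\textbf{Temporal increment.} For $s<t$, the variance of the increment is
\begin{align*}
\sum_k \lambda_k^{-\alpha}\phi_k(x)^2\Big[\int_s^t e^{-2\lambda_k(t-r)}\d r+\int_0^s (e^{-\lambda_k(t-r)}-e^{-\lambda_k(s-r)})^2\d r\Big]
\le C\sum_k \lambda_k^{-\alpha-1}\big(1\wedge \lambda_k|t-s|\big)\phi_k(x)^2 .
\end{align*}
This is bounded by working directly with the kernel rather than summing pointwise eigenfunction bounds. Write $\sum_k f(\lambda_k)\phi_k(x)^2=\int_0^\infty(\cdots)P_{\tau}(x,x)\,\d\tau$-type expressions, using $\lambda^{-\alpha-1}(1\wedge\lambda h)\lesssim\int_0^\infty \tau^{\alpha}(1-e^{-\tau/h})e^{-\lambda\tau}\,\d\tau$ up to constants. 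Then apply the on-diagonal bound $P_\tau(x,x)\le C\tau^{-d/2}\wedge Ce^{-\lambda_1\tau}$. This gives
\[
\int_0^\infty \tau^{\alpha}(1\wedge \tau/h)\,\tau^{-d/2}e^{-c\tau}\,\d\tau\lesssim h^{\theta\wedge1}\,(\text{with a log at }\theta=1).
\]
Integrability at $0$ is exactly $\alpha-d/2>-1$, i.e.\ \eqref{Dalang}. This step actually works on any bounded domain.

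\textbf{Spatial increment and main obstacle.} The spatial variance is
\[
\sum_k\lambda_k^{-\alpha}\frac{1-e^{-2\lambda_k t}}{2\lambda_k}(\phi_k(x)-\phi_k(y))^2\le\int_0^\infty \frac{\tau^{\alpha}}{\Gamma(\alpha+1)}\big[P_\tau(x,x)+P_\tau(y,y)-2P_\tau(x,y)\big]\d\tau,
\]
using $\lambda^{-\alpha-1}=\Gamma(\alpha+1)^{-1}\int_0^\infty\tau^\alpha e^{-\lambda\tau}\d\tau$. The key ingredient is a gradient estimate for the Dirichlet heat kernel. This is where the $C^2$ hypothesis enters: $|\nabla_x P_\tau(x,z)|\le C\tau^{-(d+1)/2}e^{-c|x-z|^2/\tau}$ for $\tau\le1$, together with exponential decay for $\tau\ge1$. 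From it, the bracket is at most $C\min(\tau^{-d/2},\,|x-y|^2\tau^{-d/2-1})$, since the bracket equals $\|P_{\tau/2}(x,\cdot)-P_{\tau/2}(y,\cdot)\|_{L^2}^2$ by the semigroup property. Splitting the $\tau$-integral at $|x-y|^2$ gives $|x-y|^{2\theta}$ when $\theta<1$, $|x-y|^2\log(1/|x-y|)$ when $\theta=1$, and $|x-y|^2$ when $\theta>1$ (the large-$\tau$ part converges thanks to $e^{-\lambda_1\tau}$). Establishing or citing this uniform gradient bound on $C^2$ domains is the step I expect to require the most care. Once it is in place, Kolmogorov's theorem on $[0,T]\times K$ with exponents $\beta_0<(\theta\wedge1)/2$ and $\beta_1<\theta\wedge1$ finishes the proof.
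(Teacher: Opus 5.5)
\textbf{The temporal step fails as written: the weight in your subordination formula is inverted.} For $\lambda h\ge1$,
\[
\int_0^\infty\tau^{\alpha}(1-e^{-\tau/h})e^{-\lambda\tau}\,\d\tau=\Gamma(\alpha+1)\bigl[\lambda^{-\alpha-1}-(\lambda+h^{-1})^{-\alpha-1}\bigr]\le\frac{\Gamma(\alpha+2)}{\lambda h}\,\lambda^{-\alpha-1}.
\]
So the claimed inequality $\lambda^{-\alpha-1}(1\wedge\lambda h)\lesssim\int_0^\infty\tau^\alpha(1-e^{-\tau/h})e^{-\lambda\tau}\,\d\tau$ is false for the high modes. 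The integral you then bound also does not decay: by monotone convergence, $\int_0^\infty\tau^{\alpha-d/2}(1\wedge\tau/h)e^{-c\tau}\,\d\tau\to\Gamma(\theta)c^{-\theta}>0$ as $h\to0$.

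A weight that vanishes near $\tau=0$ throws away exactly the region $\tau\lesssim h$, where $P_\tau(x,x)\approx\tau^{-d/2}$ produces the $h^\theta$. The weight must be $\approx1$ there and decay like $h/\tau$ for $\tau\gg h$. Your remark that integrability at $0$ is exactly \eqref{Dalang} is true only for such a weight. The correct identity is
\[
\lambda^{-\alpha-1}(1-e^{-\lambda h})=\frac{1}{\Gamma(\alpha)}\int_0^\infty w_h(\tau)e^{-\lambda\tau}\,\d\tau,\qquad w_h(\tau)=\int_0^{h\wedge\tau}(\tau-s)^{\alpha-1}\,\d s\asymp\tau^\alpha\wedge h\tau^{\alpha-1},
\]
together with $1\wedge\lambda h\le\frac{e}{e-1}(1-e^{-\lambda h})$. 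Combined with $P_\tau(x,x)\lesssim\tau^{-d/2}e^{-c\tau}$, this gives $\int_0^h\tau^{\theta-1}\,\d\tau+h\int_h^\infty\tau^{\theta-2}e^{-c\tau}\,\d\tau\lesssim h^{\theta\wedge1}$, with a logarithm at $\theta=1$. This is the same Fubini trick the paper uses for $I_1=\Var(u(t'-t,x))$ in Proposition~\ref{prop:u-u:t} and in Lemma~\ref{lem:var:u:2}. The paper handles the other term $I_2$ separately via the dyadic spectral bounds of Lemmas~\ref{lem:spectral:1}--\ref{lem:spectral:2}. Your corrected version puts both terms into a single integral against $P_\tau(x,x)$, which is slightly more economical.

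The spatial part and the Kolmogorov step are essentially the paper's argument: the semigroup identity for the bracket, the gradient bound of Lemma~\ref{lem:grad:P}, and the split at $|x-y|^2$. The one difference is that you work on a compact $K\subset D$. There you can integrate the gradient bound along the straight segment when $|x-y|<\mathrm{dist}(K,\partial D)$, and use the uniform variance bound otherwise. This lets you skip the paper's Lemma~\ref{lem:curve}, but you should state the path argument explicitly. The cost is that your increment bounds are uniform only on compacts, while the paper's hold uniformly on $[0,T]\times D$, which it later needs for Theorem~\ref{thm:optimal}. For the local statement of Theorem~\ref{thm:reg}, your version suffices once the temporal weight is fixed.
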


Next, we establish the optimality of the H\"older exponents and sharp moduli of continuity under the condition
\begin{align}\label{alpha:Holder:range}
	d/2-1<\alpha<d/2,
\end{align}
which is equivalent to $0<\theta<1$.
In this case, we define the metric $\rho$ on $[0,\infty) \times D$ by
\begin{equation}\label{eq:rho}
    \rho((t,x),(s,y))
    =|t-s|^{\theta/2}+\sum_{i=1}^d |x_i-y_i|^{\theta},
\end{equation}
for any $(t,x) = (t,x_1,\dots, x_d), (s,y) = (s,y_1,\dots, y_d) \in [0,\infty)\times D$.
Note that $\rho$ is equivalent to the metric $|t-s|^{\theta/2} + |x-y|^\theta$.
The next theorem gives sharp upper and lower bounds for the variance of increments including exact dependence near the boundary and at small $t$, and shows that the solution satisfies the strong local nondeterminism property under condition \eqref{alpha:Holder:range}.
This implies that the H\"older exponents in Theorem \ref{thm:reg} are optimal under this condition.
We say that a centered Gaussian random field $\{ X(s) \}_{s \in I \subset \R^N}$ satisfies strong local nondeterminism (SLND) if there exists $C>0$ such that
\begin{align}\label{SLND}
	\Var(X(s)\mid X(s_1),\dots, X(s_n)) \ge C \min_{1\le i\le n} \Var(X(s)-X(s_i)),
\end{align}
uniformly for all $n \in \N_+$ and $s,s_1,\dots, s_n \in I$ \cite{Berman, Pitt, CD82, MP87, Xiao08}.
SLND has been a useful tool for studying various sample properties of Gaussian processes and Gaussian random fields; see \cite{X09,Xiao08} and the references therein.
One of the main contributions of this paper is showing the SLND property for the solution.

\begin{theorem}\label{thm:optimal}
Suppose $D \subset \R^d$ is a bounded $C^2$ domain and \eqref{alpha:Holder:range} holds.
Then for any $T>0$,
\begin{align}\label{var:u-u}
	\Var(u(t,x)-u(s,y)) \asymp \left[\rho((t,x),(s,y)) \wedge \left((\sqrt{t} \wedge d(x))^\theta \vee (\sqrt{s} \wedge d(y))^\theta\right)\right]^2
\end{align}
uniformly for all $t,s \in [0,T]$ and $x,y \in D$, where $\theta$ is given by \eqref{theta}, $\rho$ is given by \eqref{eq:rho}, and $d(x)$ is the distance-to-boundary defined by
\begin{align}\label{d(x)}
	d(x):= \mathrm{dist}(x,\partial D).
\end{align}
In particular, setting $s=0,y=x$ yields $\Var(u(t,x)) \asymp t^{\theta} \wedge (d(x))^{2\theta}$ on $[0,T]\times D$.
Moreover, for any $T>\delta>0$ and compact subset $D' \subset D$, $\{u(t,x)\}_{(t,x) \in [\delta,T]\times D'}$ satisfies SLND in the sense of \eqref{SLND} with $C$ depending on $\delta,T,D'$.
\end{theorem}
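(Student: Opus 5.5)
We compute everything from the explicit covariance. By the Wiener isometry and the spectral decomposition of the Dirichlet Laplacian ($-\Delta\phi_k=\lambda_k\phi_k$, $\{\phi_k\}$ orthonormal in $L^2(D)$), we have $P_t(x,y)=\sum_k e^{-\lambda_k t}\phi_k(x)\phi_k(y)$ and $G_\alpha=\sum_k\lambda_k^{-\alpha}\phi_k\otimes\phi_k$. This gives
\[
\E[u(t,x)u(s,y)]=\int_0^{t\wedge s}\!\!\int_D\!\int_D P_{t-r}(x,z)G_\alpha(z,w)P_{s-r}(y,w)\,\d z\,\d w\,\d r .
\]
Using $G_\alpha=\Gamma(\alpha)^{-1}\int_0^\infty \tau^{\alpha-1}P_\tau\,\d\tau$ and the semigroup property, this becomes
\[
\E[u(t,x)u(s,y)]=\Gamma(\alpha)^{-1}\int_0^{t\wedge s}\!\int_0^\infty \tau^{\alpha-1}P_{t+s-2r+\tau}(x,y)\,\d\tau\,\d r .
\]
So every variance we need is an explicit integral of the heat kernel. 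The tool is the two-sided Dirichlet heat kernel estimate on $C^2$ domains (Zhang/Davies):
\[
P_t(x,y)\asymp\Bigl(1\wedge\tfrac{d(x)}{\sqrt t}\Bigr)\Bigl(1\wedge\tfrac{d(y)}{\sqrt t}\Bigr)t^{-d/2}e^{-c|x-y|^2/t}
\]
for small $t$, exponential decay $e^{-\lambda_1 t}$ for large $t$, and the analogous gradient and time-derivative bounds. In particular, $\Var(u(t,x))=\Gamma(\alpha)^{-1}\int_0^t\int_0^\infty\tau^{\alpha-1}P_{2r+\tau}(x,x)\,\d\tau\,\d r$. Inserting the on-diagonal bound $P_\sigma(x,x)\asymp \sigma^{-d/2}(1\wedge d(x)^2/\sigma)$, and using $\alpha-d/2>-1$ for the convergence at $0$, this reduces to $\int_0^t r^{\alpha-d/2}(1\wedge d(x)^2/r)\,\d r$ up to harmless pieces. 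That quantity is $\asymp t^\theta\wedge d(x)^{2\theta}$, which proves the final claim.

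\textbf{Upper bound in \eqref{var:u-u}.} First, $\Var(u(t,x)-u(s,y))\le 2\Var u(t,x)+2\Var u(s,y)$, which by the diagonal estimate is at most a constant times $\max$ of the two $(\sqrt{\cdot}\wedge d)^{2\theta}$ terms. Second, we need the bound $\lesssim\rho^2$. We split into a time increment and a space increment. For the time increment we write $u(t,x)-u(s,x)$ as the sum of $\int_s^t$ and $\int_0^s (P_{t-r}-P_{s-r})$, and bound the resulting kernels via $|\partial_t P|$. For the space increment we use the Taylor/gradient bound $|\nabla_x P_t|\lesssim t^{-(d+1)/2}e^{-c|x-y|^2/t}$ when $|x-y|^2<\sigma$, and the trivial bound otherwise. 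This gives $|t-s|^\theta+|x-y|^{2\theta}$, and it is here that $\theta<1$ is needed. Since both estimates hold, we may take the minimum.

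\textbf{Lower bound and SLND.} We obtain the lower bound in \eqref{var:u-u} as a consequence of an SLND-type inequality with explicit boundary dependence:
\[
\Var\bigl(u(t,x)\mid u(s_i,y_i),\,i\le n\bigr)\gtrsim \min_i\bigl[\rho^2\wedge(\sqrt t\wedge d(x))^{2\theta}\bigr].
\]
With $n=1$ this yields the lower bound; the symmetric case where $(s,y)$ has the larger boundary term follows by swapping the roles of the two points. To prove the inequality, set $r=\min_i\rho((t,x),(s_i,y_i))\wedge(\sqrt t\wedge d(x))^{\theta}$ and use the spectral/chaos representation $u(t,x)=\int_0^t\sum_k\lambda_k^{-\alpha/2}e^{-\lambda_k(t-r')}\phi_k(x)\,\d B_k(r')$. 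The conditional variance equals $\inf\E\bigl|u(t,x)-\sum a_iu(s_i,y_i)\bigr|^2$. By orthogonality of the time integrals on disjoint intervals, this is at least the variance restricted to the window $r'\in[t-\varepsilon,t]$ with $\varepsilon=r^{2/\theta}$. For points with $|s_i-t|\ge\varepsilon$, only those with $s_i>t-\varepsilon$ contribute there. For the remaining points, which have $|x-y_i|\gtrsim\varepsilon^{1/2}$, we test against a bump: choose a smooth function $\psi$ concentrated in space at $x$ at scale $\sqrt\varepsilon$, with $\psi$ and its heat flow nearly vanishing at the $y_i$. This is the Pitt/Xiao Fourier argument transplanted to eigenfunctions. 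Cauchy–Schwarz against $\int_{t-\varepsilon}^t\int \psi\, \d W_\alpha$ then gives
\[
\E\Bigl|u(t,x)-\sum a_iu(s_i,y_i)\Bigr|^2\ge \frac{\bigl|\langle u(t,x),\cdot\rangle\bigr|^2}{\|\cdot\|^2}\gtrsim \varepsilon^{\theta}=r^2 .
\]
The requirement that the window stay inside $D$ and inside $[0,t]$ is exactly what produces the cap $(\sqrt t\wedge d(x))^\theta$. On $[\delta,T]\times D'$ this cap is bounded below, which yields \eqref{SLND}.

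\textbf{Main obstacle.} The hard step is constructing the test function on a bounded domain. The Fourier-analytic localization used on $\R^d$ is not available, and $(-\Delta)^{-\alpha/2}$ is nonlocal, so we must show that $(-\Delta)^{\alpha/2}$ applied to a compactly supported bump at scale $\sqrt\varepsilon$ near $x$ has controlled $L^2$ norm of order $\varepsilon^{-\alpha/2-d/4}$ and negligible tails. We would first try to compare with the whole-space fractional Laplacian via subordination and the Dirichlet heat kernel bounds, using $d(x)\gtrsim\sqrt\varepsilon$.
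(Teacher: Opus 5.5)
Your diagonal estimate and the upper bound in \eqref{var:u-u} are fine. The direct two-sided heat-kernel computation of $\Var(u(t,x))$ is a legitimate alternative to reading it off from the increment estimate, and your time and space increment bounds are essentially the paper's. For the spatial increment, note that in a nonconvex $D$ the segment from $x$ to $y$ may leave $D$; the paper uses a curve lemma, though the Lipschitz property of the zero extension of $P_\sigma(\cdot,w)$ would also do.

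The gap is in the lower bound and SLND. Your final Cauchy--Schwarz step replaces $\E[XY]$, with $X=u(t,x)-\sum_i a_iu(s_i,y_i)$, by $\E[u(t,x)Y]$. That is only legitimate if $\E[u(s_i,y_i)Y]=0$ \emph{exactly} for every $i$, because the $a_i$ are arbitrary. Even if you take $\psi=(-\Delta)^{\alpha}\beta$ for a bump $\beta\ge0$ to remove the nonlocality, you get $\E[u(s_i,y_i)Y]=\int_{t-\varepsilon}^{t\wedge s_i}(P_{s_i-r'}\beta)(y_i)\,\d r'$. This is strictly positive by positivity of the Dirichlet heat kernel. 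For $|y_i-x|\asymp\sqrt\varepsilon$ it is only a constant factor smaller than $\E[u(t,x)Y]$. So already with $n=1$ one can choose $a_1$ with $\E[XY]=0$, and your bound degenerates to $0$; ``nearly vanishing'' is useless without control of the $a_i$. In addition, points with $s_i\ge t+\varepsilon$ survive your time window and are never handled. For $y_i=x$ and $s_i=t+\varepsilon$, $\E[u(s_i,x)Y]=\int_\varepsilon^{2\varepsilon}(P_\sigma\beta)(x)\,\d\sigma$ is of the same order as $\E[u(t,x)Y]=\int_0^{\varepsilon}(P_\sigma\beta)(x)\,\d\sigma$, and no purely spatial bump can separate such a point from $(t,x)$.

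The missing idea is to build the heat operator into the test functional, so that its pairing with every $p_{s,y}$ reproduces a space-time bump exactly. This is the Pitt/Xiao argument done in space-time rather than in space only. The paper uses Plancherel in time,
\[
\E[X^2]=\frac1{2\pi}\sum_k\lambda_k^{-\alpha}\int_\R\frac{\bigl|(e^{-i\tau t}-e^{-\lambda_kt})f_k(x)-\sum_ja_j(e^{-i\tau t_j}-e^{-\lambda_kt_j})f_k(x_j)\bigr|^2}{\lambda_k^2+\tau^2}\,\d\tau .
\]
It then pairs the numerator with $\langle\psi_{x,r},f_k\rangle_{L^2(D)}e^{i\tau t}\widehat\varphi(r^2\tau)$. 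Here $\varphi_{r^2}=r^{-2}\varphi(\cdot/r^2)$ and $\psi_{x,r}=r^{-d}\psi((\cdot-x)/r)$ are bumps supported in $(-r^2/2,r^2/2)$ and $B(x,r/2)$. Relative to $q_{k,t}(\tau)=(e^{-i\tau t}-e^{-\lambda_kt})/(\lambda_k-i\tau)$, this test function carries the factor $\lambda_k-i\tau$, i.e.\ the heat operator. By Fourier inversion and pointwise convergence of the eigenfunction expansion of $\psi_{x,r}$, the pairing equals exactly
\[
2\pi\Bigl[\varphi_{r^2}(0)\psi_{x,r}(x)-\sum_ja_j\varphi_{r^2}(t-t_j)\psi_{x,r}(x_j)\Bigr].
\]
The initial-data term $e^{-\lambda_kt}\varphi_{r^2}(t)$ vanishes because $r\le\sqrt t$. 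Every $j$-term is zero because either $|t-t_j|>r^2/2$ (past or future alike) or $|x-x_j|\ge r$. No time window is needed.

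Your ``main obstacle'' then disappears too. Since $r\le d(x)$, $\psi_{x,r}\in C_c^\infty(D)$, so $\sum_k\lambda_k^m|\langle\psi_{x,r},f_k\rangle|^2=\langle\psi_{x,r},(-\Delta)^m\psi_{x,r}\rangle\lesssim r^{-d-2m}$ by integration by parts with no boundary terms. Non-integer powers follow by H\"older interpolation, and Cauchy--Schwarz gives $\E[X^2]\gtrsim r^{-2d-4}\cdot r^{d+2\alpha+6}=r^{2\theta}$. No comparison with the whole-space fractional Laplacian is required.
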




As a consequence of the optimal regularity and the SLND property, we obtain sharp local and uniform moduli of continuity, Chung's law of the iterated logarithm, and sharp small ball probability estimates for the solution.
To present these results, let us define
\begin{align}
	&B_\rho((t,x),r) := \{ (s,y) \in [0,\infty) \times D : \rho((t,x),(s,y)) \le r \},\\
	&B^*_\rho((t,x),r) := \{ (s,y) \in [0,\infty) \times D : 0< \rho((t,x),(s,y)) \le r\}
\end{align}
for any $(t,x) \in [0,\infty) \times D$ and $r>0$.

\begin{theorem}[Exact local modulus of continuity]\label{thm:moc}
Suppose \(D\) is a bounded \(C^2\) domain and
\eqref{alpha:Holder:range} holds. 
Then, for every
\(z_0=(t_0,x_0)\in(0,\infty)\times D\), there exists a nonrandom number 
\(K_0 \in(0,\infty)\) depending on $z_0$ such that
\[
    \lim_{\varepsilon\to0^+}
    \sup_{z\in B_\rho^*(z_0,\varepsilon)}
    \frac{|u(z)-u(z_0)|}
    {\rho(z,z_0)\sqrt{\log\log(1/\rho(z,z_0))}}
    =
    K_0
    \qquad\text{a.s.}
\]
\end{theorem}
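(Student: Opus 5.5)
The plan is to follow the standard route for exact local moduli of Gaussian fields with SLND: an upper bound from the variance estimates, a lower bound from SLND plus a Borel--Cantelli argument, and then a zero--one law showing the limit is a nonrandom constant.

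\textbf{Upper bound.} Fix $z_0=(t_0,x_0)$ and choose $\delta<t_0$ and a compact $D'\ni x_0$ in the interior. By Theorem~\ref{thm:optimal}, for $z$ near $z_0$ the minimum in \eqref{var:u-u} is attained by the $\rho$ term, so
\[
\Var(u(z)-u(z_0))\asymp\rho(z,z_0)^2 .
\]
Consider the annuli $A_n=\{z: 2^{-n-1}<\rho(z,z_0)\le 2^{-n}\}$. On $A_n$ the canonical metric of $u$ is dominated by $\rho$, and the covering number of $A_n$ by $\rho$-balls of radius $\eta$ is at most $C(2^{-n}/\eta)^{Q}$ with $Q=2/\theta+d/\theta$. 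Dudley's entropy bound and Borell--TIS then give
\[
\P\Big(\sup_{A_n}|u(z)-u(z_0)|>c\,2^{-n}\sqrt{\log n}\Big)\le n^{-c'c^2}.
\]
Borel--Cantelli shows that the $\limsup$ is finite almost surely.

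\textbf{Lower bound and existence of the limit.} Choose points $z_n$ with $\rho(z_n,z_0)=r_n$ along a rapidly decreasing sequence, for instance $r_n=e^{-n^{1+\epsilon}}$ or $n^{-n}$. SLND from Theorem~\ref{thm:optimal}, applied to the increments $u(z_n)-u(z_0)$, gives
\[
\Var\big(u(z_n)-u(z_0)\mid u(z_0),u(z_1),\dots,u(z_{n-1})\big)\ge c\,r_n^2,
\]
using that $r_{n-1}\gg r_n$. The increments can then be handled as nearly independent Gaussians, either by Gaussian conditioning or by a Borel--Cantelli lemma for the conditional probabilities. This yields
\[
\limsup\ \frac{|u(z_n)-u(z_0)|}{r_n\sqrt{\log\log(1/r_n)}}\ge c>0 \quad\text{a.s.}
\]
So the $\lim_{\varepsilon\to0}\sup$ quantity, which is monotone in $\varepsilon$ and therefore has a limit, lies in $(0,\infty)$ almost surely.

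\textbf{Nonrandomness.} This is the main obstacle. I would show that the limit $L$ is measurable with respect to a tail $\sigma$-field that is trivial. One option is to write the noise $W_\alpha$ in the eigenbasis,
\[
u(t,x)=\sum_k \lambda_k^{-\alpha/2}\int_0^t e^{-\lambda_k(t-r)}\,dB_k(r)\,\phi_k(x),
\]
with $B_k$ independent Brownian motions, $\phi_k$ the Dirichlet eigenfunctions and $\lambda_k$ the eigenvalues. Truncating to finitely many modes gives a smooth ($C^1$ in $x$, and more regular than $C^{\theta/2}$ in $t$) process. Its increments are $o\big(\rho\sqrt{\log\log(1/\rho)}\big)$, because its canonical metric is $O(|t-s|^{1/2}+|x-y|)$, which is $o(\rho)$ since $\theta<1$. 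So $L$ does not depend on any finite collection of the independent Gaussian variables. Here one must be slightly careful with the time dependence: the $B_k$ are processes rather than single variables. One can either decompose each $B_k$ further into an i.i.d. Gaussian sequence, or apply a Kolmogorov $0$--$1$ law to the independent processes $(B_k)_k$. Hewitt--Savage or Kolmogorov then makes $L$ almost surely constant, and we set $K_0$ equal to this constant. An alternative is the Karhunen--Lo\`eve expansion of $u$ on a neighborhood, with the same finite-rank argument; this gives a 0--1 law for Gaussian measures.
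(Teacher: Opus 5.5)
Your proposal is correct, but it takes a genuinely different route from the paper's.

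\textbf{The paper's route.} The paper gives no hands-on argument. It builds the harmonizable representation \eqref{eq:harm} and verifies in Proposition~\ref{prop:LX} three things: the independently scattered structure, the frequency-band estimates of item (2), and the two-sided comparison $d_u\asymp\rho$ of item (4). It then quotes Theorem~5.2 of \cite{LX23}. In that framework both the lower bound and the zero--one law come from independence of disjoint frequency bands. Assumption~2.2 (SLND) is not invoked for this theorem; only the two-sided variance bound is. The cost is item (2), which needs Lemmas~\ref{lem:spectral:1}--\ref{lem:spectral:3}, including the gradient bounds for eigenfunctions. In exchange, the same verified hypotheses also yield Theorems~\ref{thm:umoc} and \ref{thm:lil}.

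\textbf{Your route.} You give a classical, self-contained argument.
\begin{itemize}
\item Upper bound: Dudley plus Borell--TIS on $\rho$-annuli, with $N(A_n,\eta)\lesssim(2^{-n}/\eta)^Q$, then Borel--Cantelli.
\item Lower bound: SLND from Proposition~\ref{prop:SLND} along a sequence with $r_{n-1}\ge 2r_n$. This uses that $\rho$ satisfies the triangle inequality, which holds because $\theta<1$. Your phrase ``handled as nearly independent Gaussians'' is made precise as follows. The conditional law of $u(z_n)-u(z_0)$ given $u(z_0),\dots,u(z_{n-1})$ is Gaussian with variance $\ge c r_n^2$. Shifting the mean only increases $\P(|\cdot|\ge a)$, by Anderson's inequality. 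L\'evy's conditional Borel--Cantelli lemma then applies with deterministic lower bounds $p_n\approx n^{-c'^2/(2c)}$, whose sum diverges.
\item Nonrandomness: a Kolmogorov zero--one law for the independent processes $B_k$, after discarding finitely many modes. This needs only qualitative facts: each $f_k$ is $C^1$ in $D$, and each Ornstein--Uhlenbeck factor has a Brownian modulus. Both are $o(\rho)$ since $\theta<1$.
\end{itemize}

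\textbf{Comparison.} Your route avoids the quantitative frequency-localized estimates entirely but relies on SLND. The paper's route is the reverse for this particular theorem.

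\textbf{One detail to tidy.} Proposition~\ref{prop:spec:rep} gives the series representation only in law. For the zero--one law you should do one of two things. Either define $B_k(t)=\lambda_k^{\alpha/2}W_\alpha(\mathbf 1_{[0,t]}\otimes f_k)$ directly from the noise, so the series holds almost surely. Or note that $L$ is a functional of the continuous version, determined by its law via a countable dense set. You should also observe that the tail part $u-u^{(N)}$ has a continuous version, measurable with respect to the completed $\sigma(B_k:k>N)$. This holds because its increment variances are dominated by those of $u$, since the two parts are independent.
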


\begin{theorem}[Exact uniform modulus of continuity]\label{thm:umoc}
Suppose \(D\) is a bounded \(C^2\) domain and
\eqref{alpha:Holder:range} holds. 
Then, for any $T>0$, there exists a nonrandom number \(K_1 \in(0,\infty)\) depending on $T$ such that
\begin{align}\label{umoc}
    \lim_{\varepsilon\to0^+}
    \sup_{\substack{z,z'\in [0,T]\times D\\0<\rho(z,z')\le\varepsilon}}
    \frac{|u(z)-u(z')|}
    {\rho(z,z')\sqrt{\log(1/\rho(z,z'))}}
    =
    K_1
    \qquad\text{a.s.}
\end{align}
\end{theorem}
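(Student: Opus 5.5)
Two bounds are needed: an upper bound $\limsup\le K_1$ and a lower bound $\ge K_1$. From these we extract existence of a deterministic limit, using a zero--one argument.

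\emph{Upper bound.} The sharp variance bound \eqref{var:u-u} of Theorem \ref{thm:optimal} gives $\Var(u(z)-u(z'))\le C\rho(z,z')^2$ on $[0,T]\times D$. The $\rho$-covering number of $[0,T]\times D$ at scale $\varepsilon$ is of order $\varepsilon^{-Q}$ with $Q=2/\theta+d/\theta$. Dudley's entropy bound combined with Borell--TIS concentration along dyadic scales $\varepsilon_n=2^{-n}$ then yields
\[
\limsup_{\varepsilon\to0}\sup_{0<\rho(z,z')\le\varepsilon}\frac{|u(z)-u(z')|}{\rho(z,z')\sqrt{\log(1/\rho(z,z'))}}<\infty
\]
almost surely, via the standard chaining argument for L\'evy-type moduli (as in \cite{MWX13,LX23}). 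Near the boundary and near $t=0$ the variance is only smaller, so no new difficulty arises there.

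\emph{Lower bound.} Fix a compact $D'\subset D$ and $\delta>0$. On $[\delta,T]\times D'$, Theorem \ref{thm:optimal} gives $\Var(u(z)-u(z'))\asymp\rho(z,z')^2$, and SLND holds there. For a grid of about $\varepsilon^{-Q}$ points $z_i$ spaced at $\rho$-distance $\varepsilon$, pair each $z_i$ with a neighbor $z_i'$ at distance $\varepsilon$. SLND (applied through conditional variances, in the usual way as in \cite{MWX13}) makes the increments $u(z_i')-u(z_i)$ sufficiently ``independent'' that
\[
\P\Bigl(\max_i|u(z_i')-u(z_i)|\le c\,\varepsilon\sqrt{\log(1/\varepsilon)}\Bigr)
\]
is summable along $\varepsilon=2^{-n}$ for small $c>0$. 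Borel--Cantelli then gives a strictly positive liminf.

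\emph{Zero--one step.} The resulting random variable $L$ (the $\limsup$ as $\varepsilon\to0$) is measurable with respect to the tail of the noise in small scales. I would write $W$ through an orthonormal expansion (eigenfunctions of $-\Delta$ times a Brownian basis) and show that changing finitely many coordinates alters $u$ by a smooth, hence locally Lipschitz (in $\rho$ with $\theta<1$, a $C^1$ function increment is $o(\rho)$), function. Thus $L$ is unchanged, and the Kolmogorov/Hewitt--Savage 0--1 law gives $L=K_1$ a.s. Combined with the two bounds, this shows $K_1\in(0,\infty)$ and the $\limsup$ is in fact a limit, since the expression is monotone in $\varepsilon$.

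\emph{Main obstacle.} The main obstacle is the lower bound. SLND only gives conditional variance bounds, and the increments must be decorrelated at a single scale over a growing number of points. I would handle this with the SLND-based conditioning argument: sequentially condition on previous grid values and bound each conditional probability by $1-\varepsilon^{c'}$. The point is to check that the minimal distances in the conditional variance are comparable to $\varepsilon$ at every step.
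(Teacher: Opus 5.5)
Your proposal is correct. The upper bound matches the paper's: chaining with an entropy bound $N\lesssim r^{-Q}$. Your count, which covers by $\rho$-balls and uses only $d_u\lesssim\rho$, is actually simpler than the packing argument of Lemma \ref{lem:N}. The other two steps take a different route.

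\emph{Constancy of the limit.} The paper cites Lemma 7.1.1 of \cite{MR} after checking $d_u(z,z')=o(\rho(z,z')\sqrt{\log(1/\rho(z,z'))})$. Your expansion/Kolmogorov argument is essentially a proof of that lemma. It is cleaner to note that each expansion term $g$ lies in the reproducing kernel Hilbert space of $u$, so $|g(z)-g(z')|\le\|g\|\,d_u(z,z')\lesssim\rho(z,z')$ uniformly on $[0,T]\times D$. This avoids your appeal to smoothness, where ``locally Lipschitz'' would not suffice because the supremum runs up to $\partial D$.

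\emph{Positivity, $K_1>0$.} The paper restricts to a compact rectangle and invokes Theorem 6.1 of \cite{LX23}, which rests on the harmonizable representation and the frequency-splitting estimates of Proposition \ref{prop:LX}. You instead run the classical SLND argument directly:
sequential conditioning;
Anderson's inequality to remove conditional means;
conditional variances $\gtrsim\varepsilon^2$ from Proposition \ref{prop:SLND} on $[\delta,T]\times D'$;
a product bound $\exp(-\varepsilon^{c'-Q})$ with $c'<Q$;
and Borel--Cantelli along $\varepsilon=2^{-n}$.
Your route is more self-contained and needs only SLND plus the upper variance bound. The paper's is shorter because it reuses the \cite{LX23} framework already needed for Theorems \ref{thm:moc} and \ref{thm:lil}.

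One detail to enforce: the partners $z_i'$ must be disjoint from the grid (e.g., spacing $2\varepsilon$, offset $\varepsilon$). Otherwise some conditional variances vanish.
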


\begin{theorem}[Chung's law of the iterated logarithm]\label{thm:lil}
Suppose \(D\) is a bounded \(C^2\) domain and
\eqref{alpha:Holder:range} holds. 
Then, for every \(z_0=(t_0,x_0)\in(0,\infty)\times D\), there exists a nonrandom number \(K_2 \in(0,\infty)\) depending on $z_0$ such that
\[
    \liminf_{\varepsilon\to0^+}
    \frac{
        \sup_{z\in B_\rho(z_0,\varepsilon)}
        |u(z)-u(z_0)|
    }{
        \varepsilon(\log\log(1/\varepsilon))^{-1/Q}
    }
    =
    K_2
    \qquad\text{a.s.,}
\]
where
\begin{equation}\label{eq:Q}
    Q=\frac{2}{\theta}+ \sum_{i=1}^d\frac{1}{\theta}
    =
    \frac{2+d}{\alpha-d/2+1}.
\end{equation}
\end{theorem}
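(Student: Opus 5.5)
We prove Chung's LIL by the standard route of small ball estimates, the Borel--Cantelli lemmas, and a zero-one law, using Theorem \ref{thm:optimal} as the main input. Fix $z_0=(t_0,x_0)$ with $t_0>0$ and choose $\delta>0$ and a compact $D'\subset D$ so that $B_\rho(z_0,\varepsilon)\subset[\delta,T]\times D'$ for all small $\varepsilon$. On this set \eqref{var:u-u} gives $\Var(u(z)-u(z'))\asymp\rho(z,z')^2$, since the truncation term is bounded below there. SLND also holds there with a constant $C$.

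\textbf{Step 1: small ball estimate.} We want
\[
-\log\P\Big(\sup_{z\in B_\rho(z_0,r)}|u(z)-u(z_0)|\le \varepsilon\Big)\asymp (r/\varepsilon)^{Q}
\]
for $0<\varepsilon\le r$ small.

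For the upper bound on the probability (the lower bound on $-\log$), the argument uses SLND. Pick about $(r/\varepsilon)^Q$ points $z_1,\dots,z_n$ in $B_\rho(z_0,r)$ that are pairwise $\rho$-separated at scale $c\varepsilon$. The count is $(r/\varepsilon)^Q$ because a $\rho$-ball of radius $\varepsilon$ has Lebesgue volume about $\varepsilon^{2/\theta}\prod_i\varepsilon^{1/\theta}=\varepsilon^Q$. Conditioning successively and applying \eqref{SLND}, each conditional variance is $\ge C'\varepsilon^2$. Anderson's inequality then bounds each conditional probability by a constant $q<1$, which gives $\P\le q^n$.

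For the lower bound on the probability, we use Talagrand's small ball lemma. This requires the metric entropy bound $N(B_\rho(z_0,r),\varepsilon)\lesssim (r/\varepsilon)^Q$, which follows from the variance upper bound in \eqref{var:u-u}.

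\textbf{Step 2: the $\liminf$ is positive and finite.} For the lower bound, take $r_k=e^{-k}$ and $\varepsilon_k=\gamma r_k(\log\log(1/r_k))^{-1/Q}$. Step 1 gives $\P(\cdot)\le \exp(-c\gamma^{-Q}\log k)$, which is summable once $\gamma$ is small. Borel--Cantelli and monotonicity in $r$ between consecutive $r_k$ then give $\liminf\ge c>0$.

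For the upper bound we need independence, and this is the main obstacle. Take $r_k=e^{-k^{2}}$ (or $k^{-k}$) and decompose $u$ into pieces driven by disjoint frequency and time ranges of the noise. One way is to write $u(t,x)=\int_0^t\sum_j e^{-\lambda_j(t-r)}\lambda_j^{-\alpha/2}\phi_j(x)\phi_j(y)\,W(\d r,\d y)$ in the Dirichlet eigenbasis. Restricting, say, to eigenvalues $\lambda_j\in[r_{k-1}^{-2/\theta},r_{k+1}^{-2/\theta}]$ and times $r\in[t_0-r_{k-1}^{2/\theta}\cdot(\text{large}),t]$ gives independent pieces $X_k$. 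We then show that the remainder $u-X_k$ has sup-increment over $B_\rho(z_0,r_k)$ of size $o(r_k(\log\log)^{-1/Q})$ almost surely. This is done by bounding the variance of the remainder's increments through eigenvalue sums, using Weyl asymptotics $\lambda_j\asymp j^{2/d}$ and the spectral sum bounds behind Lemma \ref{lem:G:bd}, and then applying Dudley's inequality together with Borel--Cantelli.

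If this spectral decomposition fails, the fallback is the conditional approach of Talagrand/Li--Shao. There we condition on the past $\sigma$-field and use a generalized (conditional) Borel--Cantelli lemma, with the small ball lower bound from Step 1 holding uniformly for the conditioned field, which follows because conditioning reduces variances while keeping the entropy bound. Since $\P(\sup|X_k|\le\gamma\varepsilon_k)\ge \exp(-C\gamma^{-Q}\log k^2)$ is non-summable for large $\gamma$, the second Borel--Cantelli lemma gives $\liminf<\infty$.

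\textbf{Step 3: the limit is a constant.} The $\liminf$ depends only on the germ of the noise near $z_0$ at arbitrarily fine scales. By the decomposition in Step 2 it is measurable with respect to a tail $\sigma$-field of independent pieces, so a Kolmogorov-type zero-one law (as in \cite{LX23,HL26}) makes it a.s. equal to a constant $K_2$. By Step 2, $K_2\in(0,\infty)$.
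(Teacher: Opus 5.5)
Your architecture (small-ball bounds from SLND and Talagrand's lemma, Borel--Cantelli in both directions along sparse scales, a zero-one law) is what Theorem~4.4 of \cite{LX23} packages. The paper does not redo it; it only checks that theorem's hypotheses in Proposition~\ref{prop:LX} and cites it. The substantive difference is the independent decomposition.

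The paper cuts the noise in the harmonizable representation \eqref{eq:harm} according to $\lambda_n^{\theta/2}\vee|\tau|^{\theta/2}$, with $\tau$ the temporal Fourier variable. This makes the low-frequency part Lipschitz in $t$ and in $x$ with constants $a^{\gamma_i}$, which is the form Assumption~2.1(b) of \cite{LX23} demands. Your cut in $\lambda_n$ alone leaves every retained mode an Ornstein--Uhlenbeck process in $t$, with increments of order $|t-s|^{1/2}$. That would not fit \cite{LX23}, but in a direct argument it is harmless because $\theta<1$. Given the sharp spectral sums discussed below, modes below a cutoff $\Lambda$ have increment standard deviation $\lesssim r_k^{1/\theta}\Lambda^{(1-\theta)/2}$ on $B_\rho(z_0,r_k)$. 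Modes above $M$ have standard deviation $\lesssim M^{-\theta/2}$. Both are $O(r_k\e^{-ck})$ when $r_k=\e^{-k^2}$ and $\Lambda,M$ are geometric means of consecutive $r_j^{-2/\theta}$. So the paper's route is short and reuses one representation for Theorems~\ref{thm:moc}--\ref{thm:lil}. Yours is self-contained and uses the natural eigenbasis cut. The small-ball lower bound for your pieces $X_k$ follows from Step~1 by Anderson's inequality, since $u-X_k$ is independent of $X_k$.

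Three steps need repair as written.

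First, the bands $[r_{k-1}^{-2/\theta},r_{k+1}^{-2/\theta}]$ for $k$ and $k+1$ overlap on $[r_k^{-2/\theta},r_{k+1}^{-2/\theta}]$, and your time windows are nested. So $X_k$ and $X_{k+1}$ share noise and are not independent. Use disjoint bands such as $[(r_{k-1}r_k)^{-1/\theta},(r_kr_{k+1})^{-1/\theta})$, and drop the time restriction, which plays no role.

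Second, Weyl's law together with the undifferentiated spectral sums does not control the spatial increments of the low modes. You need the sharp gradient bound $\sum_{\lambda_n\le R}\lambda_n^{-\alpha-1}|\nabla f_n(x)|^2\lesssim R^{d/2-\alpha}$ of Lemma~\ref{lem:spectral:3}. It rests on the heat-kernel gradient estimate of Lemma~\ref{lem:grad:P}, or on interior gradient estimates near $x_0$. Pointwise eigenfunction bounds lose a positive power of $R$ when $d\ge2$. Compensating for that loss forces $\log(1/r_k)$ to grow geometrically in $k$. Then $\sum_k\exp(-C\gamma^{-Q}\log\log(1/r_k))<\infty$ for every $\gamma$, and the second Borel--Cantelli step collapses.

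Third, phrase the zero-one law through $\bigcap_N\sigma(B_n:n\ge N)$. Finitely many modes are smooth in $x$ and H\"older of every order $<1/2$ in $t$. Because $\theta<1$, they contribute $o(\varepsilon(\log\log(1/\varepsilon))^{-1/Q})$ on $B_\rho(z_0,\varepsilon)$.

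With these fixes the argument goes through, and your conditional fallback is unnecessary.
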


\begin{theorem}[small ball probability]\label{thm:sbp}
Suppose \(D\) is a bounded \(C^2\) domain and \eqref{alpha:Holder:range} holds. Then, for any $T>0$, there exist \(K_3,K_4>0\) depending on $T$ such that
\[
    \exp\left(-K_3\varepsilon^{-Q}\right)
    \le
    \P\left\{
        \sup_{(t,x)\in [0,T]\times D}|u(t,x)|\le\varepsilon
    \right\}
    \le
    \exp\left(-K_4\varepsilon^{-Q}\right),
\]
uniformly for all \(\varepsilon \in (0, 1]\), where $Q$ is given by \eqref{eq:Q}.
\end{theorem}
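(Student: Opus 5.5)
The plan is to reduce the small ball estimate to two-sided bounds built from the variance estimate \eqref{var:u-u}. For the lower bound I will use a Talagrand-type entropy estimate. For the upper bound I will use an SLND-based conditioning argument.

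\textbf{Lower bound.} Write $S=[0,T]\times D$ with the canonical metric $d_u(z,z')=\Var(u(z)-u(z'))^{1/2}$. By \eqref{var:u-u}, $d_u(z,z')\le C\rho(z,z')$. The $\rho$-balls of radius $r$ are comparable to boxes of side $r^{2/\theta}$ in $t$ and $r^{1/\theta}$ in each $x_i$. Since $S$ is bounded, the covering number therefore satisfies
\[
N(S,d_u,r)\le C r^{-2/\theta-d/\theta}=C r^{-Q}.
\]
Talagrand's lower bound (in the form of Ledoux's or Li--Shao's small ball lemma) then applies: if $N(S,d_u,r)\le\psi(r)$ with $\psi(r)\asymp r^{-Q}$ and $\psi(r/2)\le C\psi(r)$, then $\P\{\sup_{z,z'}|u(z)-u(z')|\le\varepsilon\}\ge\exp(-K\psi(\varepsilon))$. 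Since $u(0,x)=0$, we have $\sup|u|\le\sup_{z,z'}|u(z)-u(z')|$, which gives the lower bound with the rate $\varepsilon^{-Q}$.

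\textbf{Upper bound.} Fix a compact $D'\subset D$ and $\delta>0$, and restrict to $[\delta,T]\times D'$, where SLND holds by Theorem \ref{thm:optimal}. Restricting can only increase the probability, so this is enough for an upper bound. In this region pick a grid $z_1,\dots,z_n$ with mutual $\rho$-distance at least $c\varepsilon$ and $n\asymp\varepsilon^{-Q}$. By \eqref{var:u-u}, away from the boundary and from $t=0$ the variance of increments is $\asymp\rho^2$.

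Order the points and apply SLND \eqref{SLND}. This gives
\[
\Var(u(z_k)\mid u(z_1),\dots,u(z_{k-1}))\ge C\min_{j<k}\rho(z_k,z_j)^2\ge C'\varepsilon^2.
\]
Condition successively and use Anderson's inequality, which says a centered Gaussian with conditional variance $\sigma^2$ satisfies $\P(|N+m|\le\varepsilon)\le\P(|N|\le\varepsilon)$. Choosing the grid constant $c$ large relative to $\varepsilon$ makes each factor at most some $q<1$. Hence
\[
\P\{\sup|u|\le\varepsilon\}\le\P\{\max_k|u(z_k)|\le\varepsilon\}\le q^{n}=\exp(-K_4\varepsilon^{-Q}).
\]

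\textbf{Remaining checks and main obstacle.} The uniformity over $\varepsilon\in(0,1]$ follows because both bounds hold for small $\varepsilon$ and adjusting the constants covers the rest. The main obstacle is the upper bound's counting: we must ensure that $n\asymp\varepsilon^{-Q}$ points with pairwise separation $\gtrsim\varepsilon$ fit inside the SLND region. This holds because that region has positive Lebesgue measure, and the $\rho$-anisotropy gives exactly the exponent $Q$. For the lower bound, the care needed is to confirm that the near-boundary degeneracy in \eqref{var:u-u} only helps, since it only decreases $d_u$.
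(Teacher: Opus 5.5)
Your proposal is correct and follows the same route as the paper: a Talagrand-type lower bound from the entropy estimate $N([0,T]\times D,r)\lesssim r^{-Q}$, and an SLND-based upper bound on an interior rectangle. There the paper simply cites Theorem 5.1 of \cite{X09}, whose proof is exactly your sequential conditioning plus Anderson's inequality argument (any fixed grid constant $c>0$ already gives $q<1$). Your two small deviations are valid and slightly simpler: the covering bound follows directly from $d_u\le C\rho$ by counting anisotropic boxes, in place of the packing argument in Lemma \ref{lem:N}; and using $u(0,\cdot)=0$ gives $\sup|u|\le\sup_{z,z'}|u(z)-u(z')|$, which removes the paper's Gaussian correlation inequality step.
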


Matching small ball probability bounds for nonlinear stochastic heat equations driven by space-time white noise on a bounded interval have been established in \cite{AJM21}.
The case of nonlinear stochastic heat equations on torus and compact Riemannian manifolds driven by temporally-white, spatially-colored Gaussian noise have also been studied in \cite{C24,C25,C26}.
However, in \cite{C24,C25,C26}, the exponents in the upper and lower bounds do not match.
In Theorem \ref{thm:sbp} above, we are able to obtain matching bounds for the linear stochastic heat equation \eqref{eq:SHE} driven by colored noise of a similar type on bounded $C^2$ domains, thanks to our SLND property.

Theorems \ref{thm:moc}--\ref{thm:sbp} are established using the framework of \cite{LX23}, which relies on sharp variance bounds for the increments and the SLND property.
Sharp variance bounds in the case of bounded domains are more involved compared to the case of bounded intervals in \cite{HL26}.
In particular, the Dirichlet eigenfunctions are not uniformly bounded unless $d=1$ and uniform bounds of the Dirichlet eigenvalues and eigenfunctions are not good enough to derive those sharp variance bounds in higher dimensions.
It turns out that sharp variance estimates require additional tools such as Gaussian-type heat kernel estimates.
Also, the domain is required to be sufficiently smooth (with $C^2$ boundary) in order to apply gradient estimates for the heat kernel to obtain sharp spatial regularity.
Similarly to \cite{HL26}, the SLND property is established using the orthonormal basis of eigenfunctions and a scaling argument involving suitable test functions.

Since the temporal process $\{u(t,x_0)\}_{t \in [\delta,T]}$ and the spatial process $\{ u(t_0,x) \}_{x \in D'}$ also satisfy SLND, the following results can be obtained using the same proofs for the above main theorems.

\begin{corollary}
Suppose $D\subset \R^d$ is a bounded $C^2$ domain and \eqref{alpha:Holder:range} holds.
Then, for any fixed $T \ge t_0>0$ and $x_0 \in D$, there exist constants $C_0,C_0',C_1,C_1',C_2,C_2' \in (0,\infty)$ such that
\begin{gather*}
	\lim_{\varepsilon\to0^+} \sup_{t \ge 0:0< |t-t_0| \le \varepsilon} \frac{|u(t,x_0)-u(t_0,x_0)|}{|t-t_0|^{\theta/2} \sqrt{\log\log(1/|t-t_0|)}} = C_0 \quad \text{a.s.},\\
	\lim_{\varepsilon\to0^+} \sup_{x \in D: 0< |x-x_0| \le \varepsilon} \frac{|u(t_0,x)-u(t_0,x_0)|}{|x-x_0|^\theta \sqrt{\log\log(1/|x-x_0|)}} = C_0' \quad \text{a.s.},\\
	\lim_{\varepsilon\to0^+} \sup_{t,t' \in [0,T]: 0<|t-t'| \le \varepsilon} \frac{|u(t,x_0)-u(t',x_0)|}{|t-t'|^{\theta/2} \sqrt{\log(1/|t-t'|)}} = C_1 \quad \text{a.s.},\\
	\lim_{\varepsilon\to0^+} \sup_{x,x'\in D: 0<|x-x'| \le \varepsilon} \frac{|u(t_0,x)-u(t_0,x')|}{|x-x'|^\theta \sqrt{\log(1/|x-x'|)}} = C_1' \quad \text{a.s.},\\
	\liminf_{\varepsilon\to0^+} \frac{\sup_{t: |t-t_0| \le \varepsilon}|u(t,x_0)-u(t_0,x_0)|}{\varepsilon^{\theta/2} (\log\log(1/\varepsilon))^{-\theta/2}} = C_2 \quad \text{a.s.},\\
	\liminf_{\varepsilon\to0^+} \frac{\sup_{x \in D: |x-x_0| \le \varepsilon}|u(t_0,x)-u(t_0,x_0)|}{\varepsilon^\theta (\log\log(1/\varepsilon))^{-\theta/d}} = C_2' \quad \text{a.s.}
\end{gather*}
Moreover, there exist $C_3,C_3',C_4,C_4' \in (0,\infty)$ such that
\begin{gather*}
	\exp(-C_3 \varepsilon^{-2/\theta}) \le \P\left\{ \sup_{t \in [0,T]} |u(t,x_0)| \le \varepsilon \right\} \le \exp(-C_4 \varepsilon^{-2/\theta}),\\
	\exp(-C_3' \varepsilon^{-d/\theta}) \le \P\left\{ \sup_{x \in D} |u(t_0,x)| \le \varepsilon \right\} \le \exp(-C_4' \varepsilon^{-d/\theta}),
\end{gather*}
uniformly for all $\varepsilon \in (0,1]$.
\end{corollary}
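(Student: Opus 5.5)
The plan is to reduce every statement in the corollary to the general framework of \cite{LX23} applied to the two one-parameter subfields $\{u(t,x_0)\}_{t}$ and $\{u(t_0,x)\}_{x}$. That framework needs two inputs on the relevant index set: two-sided variance bounds for increments that are comparable to a power of a metric, and SLND. Given these, the proofs of Theorems \ref{thm:moc}--\ref{thm:sbp} go through unchanged. The only change is the homogeneous dimension. It becomes $Q_t=2/\theta$ for the temporal process, where the metric is $|t-t'|^{\theta/2}$. It becomes $Q_x=d/\theta$ for the spatial process, where the metric is $\sum_i|x_i-x_i'|^\theta$. This accounts for the exponents $\theta/2$ and $\theta/d$ in the Chung laws and $\varepsilon^{-2/\theta}$, $\varepsilon^{-d/\theta}$ in the small ball estimates.

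\emph{Step 1 (variance bounds).} Specialize \eqref{var:u-u} of Theorem \ref{thm:optimal}. For the local statements, fix $x_0$ with $d(x_0)>0$ and $t,t'\ge \delta>0$. The truncation term $(\sqrt t\wedge d(x_0))^\theta$ is then bounded below, so $\Var(u(t,x_0)-u(t',x_0))\asymp |t-t'|^\theta$ for $|t-t'|$ small. In the same way, for $t_0>0$ and $x,x'$ in a compact $D'\subset D$, we get $\Var(u(t_0,x)-u(t_0,x'))\asymp|x-x'|^{2\theta}$. For the uniform moduli and small ball estimates on all of $[0,T]$ or all of $D$, only the upper bound $\Var\lesssim \rho^2$ from \eqref{var:u-u} is needed globally. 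The lower-bound side (the liminf parts of the uniform modulus and the small ball upper bound) can be obtained on an interior subset $[\delta,T]$, respectively $D'$, since restricting the supremum only makes these inequalities easier.

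\emph{Step 2 (SLND).} Restricting the conditioning set in \eqref{SLND} to points on the line $\{x_0\}\times[\delta,T]$ or the slice $\{t_0\}\times D'$ is immediate. SLND for the full field therefore gives SLND for both subprocesses with the same constant, and Theorem \ref{thm:optimal} supplies this directly.

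\emph{Step 3 (apply the framework).} I will verify the hypotheses of \cite{LX23} and repeat the arguments of Theorems \ref{thm:moc}--\ref{thm:sbp}. The upper halves follow from the variance upper bound via entropy/chaining estimates. The lower halves use SLND through conditional variances along sequences of well-separated points. The zero-one laws, which make the constants nonrandom, come from the same arguments.

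The main obstacle I expect is near the boundary, in the spatial uniform modulus and the spatial small ball estimate over all of $D$. There, the variance degenerates like $d(x)^{2\theta}$, and neither the upper comparison with $\rho$ nor SLND holds uniformly. The degeneracy only helps the upper bounds, since the increments are smaller, and \eqref{var:u-u} remains valid up to the boundary. For the lower bounds, it suffices to work on a fixed interior compact set $D'$ of positive volume, because the small ball exponent $\varepsilon^{-d/\theta}$ only needs an order of $\varepsilon^{-d/\theta}$ well-separated interior points. Likewise, the time small ball near $t=0$ is handled using $[T/2,T]$ for the lower-bound argument.
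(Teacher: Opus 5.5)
Your route is the paper's: the corollary is justified there only by noting that $\{u(t,x_0)\}_{t\in[\delta,T]}$ and $\{u(t_0,x)\}_{x\in D'}$ inherit SLND and the two-sided variance bounds of Theorem~\ref{thm:optimal}, and then rerunning the proofs of Theorems~\ref{thm:moc}--\ref{thm:sbp} with $Q$ replaced by $2/\theta$ and $d/\theta$. Your handling of the boundary and of $t$ near $0$ also matches: upper bounds are taken globally via entropy, and lower bounds on an interior set.

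However, your list of inputs for \cite{LX23} is incomplete, so the argument as you set it up does not close. The results you need are Theorem~5.2 (exact local modulus), Theorem~4.4 (Chung's law) and Theorem~6.1 (positivity of the uniform-modulus constant). Besides variance comparability and SLND, these require Assumption~2.1 of \cite{LX23}. That assumption asks for a field $A\mapsto u(A,z)$ that is independently scattered Gaussian noise and satisfies the frequency-cutoff estimates of Proposition~\ref{prop:LX}(1)--(2). It does not follow from variance bounds and SLND.

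In particular, your claim that the ``upper halves follow from the variance upper bound via entropy/chaining'' fails for Chung's law. Proving that the $\liminf$ is finite means showing that events such as $\{\sup_{|t-t_0|\le\varepsilon_k}|u(t,x_0)-u(t_0,x_0)|\le C\varepsilon_k^{\theta/2}(\log\log(1/\varepsilon_k))^{-\theta/2}\}$ occur infinitely often. That is a second Borel--Cantelli argument, and it needs approximate independence across the scales $\varepsilon_k$; chaining only bounds increments from above. The same independence structure gives the zero-one laws that make $C_0,C_0',C_2,C_2'$ nonrandom.

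The repair is immediate. The harmonizable field \eqref{eq:harm} is defined for every $(t,x)$. So for fixed $x_0$, the map $A\mapsto u(A,t,x_0)$ is still independently scattered with $u([0,\infty),t,x_0)=u(t,x_0)$. Taking $x=y=x_0$ in Proposition~\ref{prop:LX}(2) gives
\[
\|u([a,b),t,x_0)-u(t,x_0)-u([a,b),s,x_0)+u(s,x_0)\|_2\le C\bigl(a^{\gamma_0}|t-s|+b^{-1}\bigr),
\]
which is Assumption~2.1(b) for the one-parameter process with $\alpha_0=\theta/2$. Taking $t=s=t_0$ gives the spatial version with $\alpha_i=\theta$. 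With this third input added to your Steps~1--2, the argument is complete and coincides with the paper's.
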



The rest of the paper is organized as follows. 
In Section \ref{s:pre}, we recall some basic facts about the Dirichlet heat kernel and fractional Laplacian, and give the precise definition of the Gaussian noise and corresponding stochastic integrals.
In Section \ref{s:exist}, we prove Theorem \ref{thm:exist}, derive variance bounds, and give a series representation for the solution.
In Section \ref{sec:regularity}, we prove Theorem \ref{thm:reg} and obtain temporal and spatial regularity of the solution.
In Section \ref{s:optimal}, we show that the solution satisfies SLND and prove Theorem \ref{thm:optimal}.
In Section \ref{s:proofs}, we prove Theorems \ref{thm:moc}--\ref{thm:sbp}.

\subsection{Notation}
We clarify some notation that will be used throughout the paper:
\(\mathbb N_+=\{1,2,\ldots\}\); \(\mathbb N_0=\{0,1,2,\ldots\}\);
\(\R_+=(0,\infty)\);
For a set \(A\), \(\#A\) denotes its cardinality and \(\1_A\) its indicator
function; \(a\wedge b=\min\{a,b\}\),
\(a\vee b=\max\{a,b\}\); $\log_+(x) = \log(e \vee x)$, where $\log$ denotes natural logarithm; \(c,C\) denote finite positive constants
whose values may change from line to line; For two functions \(f\)
and \(g\), ``\(f\lesssim g\)'' means there is $C>0$ such that \(f(x)\le Cg(x)\) for all $x$; ``\(f\asymp g\)'' means
\(f\lesssim g\) and \(g\lesssim f\); ``\(f(x)=O(g(x))\)'' means \(|f|\lesssim |g|\); For any $k \ge 1$, $\|X\|_k = (\E|X|^k)^{1/k}$ denotes the $L^k(\Omega)$-norm of a random variable $X$; $B(x,r)$ and $\overline{B}(x,r)$ respectively denote open and closed Euclidean balls centered at $x$ with radius $r$; $|x|$ denotes the Euclidean norm of $x$.

\section{Preliminaries}\label{s:pre}

\subsection{Dirichlet Heat kernel}

Let $D\subset \R^d$ be a bounded Lipschitz domain.
According to standard spectral theory \cite{Davies,McLean}, the Dirichlet Laplacian $-\Delta$ has a discrete spectrum, with strictly positive eigenvalues $0<\lambda_1 < \lambda_2 \le \lambda_3 \le \dots$ and an orthonormal basis of eigenfunctions $\{f_n\}_{n=1}^\infty$ for $L^2(D)$, with respect to the inner product $\langle \phi\,,\psi\rangle_{L^2(D)} = \int_D \phi(x) \psi(x)\, \d x$, such that $-\Delta f_n = \lambda_n f_n$ in $D$ and $f_n=0$ on $\partial D$.
Moreover, $f_1$ can be chosen to be positive \cite[Proposition 1.4.3]{Davies} and $f_n \in H^1_0(D) \cap C^\infty(D)$ \cite[Theorem 9.31]{Brezis}.
The Dirichlet heat kernel is given by
\begin{align}\label{P}
    P_t(x,y) = \sum_{n=1}^\infty \e^{-\lambda_n t} f_n(x) f_n(y) \qquad \forall t > 0, x, y \in D.
\end{align}
Due to Weyl's law $\lambda_n \asymp n^{2/d}$ (see \cite[Corollary 3.15]{FLW}) and $\sup_{x \in D}|f_n(x)|\lesssim \lambda_n^{d/4}$ (see \cite[Proposition 5]{F95}), for any $t>0$ the sum in \eqref{P} is uniformly absolutely convergent for $x,y\in D$.
Also, it follows from \eqref{P} that $P_t$ is symmetric, i.e., $P_t(x,y) = P_t(y,x)$ for all $t>0$ and $x,y\in D$, and $P_t$ has the semigroup property:
\begin{align}\label{semigroup}
    \int_D P_t(x,y) P_s(y,z) \, \d y = P_{t+s}(x,z) \quad \forall t,s>0, x,z \in D.
\end{align}
Recall a two-sided heat kernel estimate \cite{Riahi}:
\begin{lemma}\label{lem:P}
    If $D \subset \R^d$ is a bounded Lipschitz domain, then there exist constants $c_1,c_2,C_1,C_2>0$ and $0<\mu \le 1 \le \nu$ such that
    \begin{align}\begin{split}\label{P:bd}
        &c_1\left( 1\wedge \frac{f_1(x)}{1 \wedge t^{\mu/2}} \right)\left( 1\wedge \frac{f_1(y)}{1 \wedge t^{\mu/2}} \right) \frac{\e^{-\lambda_1 t}}{1\wedge t^{d/2}}
        \exp\left(-\frac{|x-y|^2}{C_1t}\right)
        \le P_t(x,y) \\
        &
        \le c_2 \left( 1\wedge \frac{f_1(x)}{1 \wedge t^{\nu/2}} \right)\left( 1\wedge \frac{f_1(y)}{1 \wedge t^{\nu/2}} \right) \frac{\e^{-\lambda_1 t}}{1\wedge t^{d/2}}
        \exp\left(-\frac{|x-y|^2}{C_2t}\right)
    \end{split}\end{align}
    uniformly for all $t>0$ and $x,y \in D$. If in addition $D$ is $C^{1,\gamma}$ for some $\gamma>0$, then \eqref{P:bd} holds with $\mu=\nu=1$ and $f_1(x) \asymp d(x)$ for all $x \in D$, where $d(x)$ is the distance-to-boundary given by \eqref{d(x)}.
\end{lemma}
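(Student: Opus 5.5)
This lemma is quoted from the literature (Riahi), so the plan is to show how it follows from known results rather than to rebuild boundary heat kernel theory from scratch.

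\emph{Step 1 (large times).} For $t\ge 1$ I would use the eigenfunction expansion \eqref{P}. By ultracontractivity, $P_1(x,y)\le C$. Writing $P_t(x,y)=\sum_n \e^{-\lambda_n t}f_n(x)f_n(y)$, the spectral gap $\lambda_2>\lambda_1$ and the bound $\sup|f_n|\lesssim\lambda_n^{d/4}$ give
\[
P_t(x,y)=\e^{-\lambda_1 t}f_1(x)f_1(y)\bigl(1+O(\e^{-(\lambda_2-\lambda_1)t})\bigr)
\]
once the remainder is controlled relative to $f_1(x)f_1(y)$. That relative control is intrinsic ultracontractivity, which holds on bounded Lipschitz domains (Bañuelos; Davies--Simon): there is $C_t$ with $P_t(x,y)\le C_t f_1(x)f_1(y)$ for each $t>0$. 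With it, the two-sided bound $P_t\asymp \e^{-\lambda_1 t}f_1(x)f_1(y)$ holds for $t\ge 1$. In this regime the Gaussian factor is comparable to $1$, since $|x-y|\le\diam D$, and $1\wedge f_1\asymp f_1$ because $f_1$ is bounded.

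\emph{Step 2 (small times, interior).} For $t\le1$, away from the boundary, I would compare with the Euclidean Gaussian kernel. The upper bound comes from domain monotonicity, $P_t\le p_t^{\R^d}$. The lower bound comes from a chaining argument of Harnack type inside $D$, which the connectivity and Lipschitz character of $D$ permit.

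\emph{Step 3 (small times, near the boundary).} This is the main obstacle: one must capture the boundary decay factors $(f_1(x)/t^{\mu/2})\wedge1$. The standard route is the boundary Harnack principle together with the Carleson estimate for positive caloric functions vanishing on a Lipschitz boundary. These give
\[
P_t(x,y)\asymp \frac{P_t(x,y)\,\text{at interior points } x',y' \text{ at distance}\approx\sqrt t}{\ldots}\cdot\frac{f_1(x)}{f_1(x')}\cdot\frac{f_1(y)}{f_1(y')},
\]
and one then combines this with Step 2 at $x',y'$. It remains to compare $f_1(x)/f_1(x')$ with $(f_1(x)/t^{\mu/2})\wedge 1$. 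On a Lipschitz domain, $f_1$ satisfies only Hölder-type growth bounds at the boundary. Concretely, $d(x)^{\nu}\lesssim f_1(x)\lesssim d(x)^{\mu}$, with exponents reflecting the cone apertures, and this is exactly why the upper and lower estimates carry different exponents $\mu\le1\le\nu$.

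\emph{Step 4 ($C^{1,\gamma}$ case).} On $C^{1,\gamma}$ domains, the Hopf lemma together with boundary Schauder estimates give $f_1\asymp d$. The boundary Harnack ratios then scale linearly, which yields $\mu=\nu=1$ and recovers the Davies/Zhang estimates for $C^{1,1}$ domains extended to $C^{1,\gamma}$.

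Each step rests on cited tools (intrinsic ultracontractivity, the parabolic boundary Harnack principle, the growth of $f_1$ on Lipschitz and $C^{1,\gamma}$ domains), so in the paper I would simply cite \cite{Riahi} for the combined statement rather than reproduce these arguments.
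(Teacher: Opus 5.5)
Your proposal matches the paper, which gives no proof of this lemma and simply cites \cite{Riahi} for both the Lipschitz and the $C^{1,\gamma}$ statements. Your sketch (intrinsic ultracontractivity for $t\ge1$, Gaussian comparison with Harnack chains in the interior, boundary Harnack/Carleson near $\partial D$, and $d^{\nu}\lesssim f_1\lesssim d^{\mu}$, where the $\mu$-bound feeds the lower estimate and the $\nu$-bound the upper one) is consistent with how such results are obtained, but the Step~3 display is not well formed as written and should read $P_t(x,y)\asymp \frac{f_1(x)f_1(y)}{f_1(x_t)f_1(y_t)}\,t^{-d/2}\exp(-c|x-y|^2/t)$ for $t\le1$, with $x_t,y_t$ near $x,y$ satisfying $d(x_t),d(y_t)\asymp\sqrt t$ and with different constants $c$ in the two directions.
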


When in addition $D$ is a $C^2$ domain, we also have the following gradient estimate for the heat kernel:

\begin{lemma}\label{lem:grad:P}
If $D \subset \R^d$ is a bounded $C^2$ domain, then there exists $C>0$ such that
\begin{align*}
	|\nabla_x P_t(x,y)| \le \frac{e^{-\frac12 \lambda_1 t}}{t^{(d+1)/2}} \exp\left(-\frac{|x-y|^2}{Ct}\right),
\end{align*}
uniformly for all $t>0$ and $x,y \in D$.
\end{lemma}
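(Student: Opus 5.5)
The estimate has two regimes: small times, where it is a genuinely local statement and needs parabolic regularity up to the boundary, and large times, where it follows from the semigroup property and boundedness of $D$. As stated, the bound has no multiplicative constant. At $x=y$ a constant cannot be absorbed into the Gaussian factor, so I will prove it with a prefactor $C$ and read the lemma in the usual sense of "$\lesssim$". I will use only Lemma \ref{lem:P}, with $\mu=\nu=1$ since a $C^2$ domain is $C^{1,\gamma}$, together with standard parabolic Schauder theory.

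\emph{Small times, $0<t\le 1$.} Fix $y\in D$ and set $v(s,z)=P_s(z,y)$. This is a caloric function in $D$ that vanishes on $\partial D$. Put $r=\sqrt{t}/2$ and consider the parabolic cylinder $Q=(t-r^2,t]\times(B(x,r)\cap D)$. Rescaling by $(s,z)\mapsto (t+r^2 s',\,x+rz')$ turns $Q$ into a unit cylinder. The rescaled boundary $r^{-1}(\partial D-x)$ is $C^2$ with curvature bounds that improve as $r\to0$, so all constants below are uniform in $x$ and $t$.
\begin{itemize}
\item If $d(x)\ge r$, I apply the interior gradient estimate for the heat equation.
\item If $d(x)<r$, I flatten the boundary by a $C^2$ diffeomorphism, which produces a divergence-form operator with $C^1$ coefficients. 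I then apply the boundary $C^{1,\gamma}$ Schauder estimate with zero Dirichlet data.
\end{itemize}
Undoing the scaling, both cases give
\[
|\nabla_x P_t(x,y)|\le C r^{-1}\sup_{Q}|v|.
\]
By the upper bound in Lemma \ref{lem:P}, $\sup_Q|v|\le C t^{-d/2}\exp(-|x-y|^2/(C't))$. For this I use that $s\in[3t/4,t]$ and $|z-x|\le \sqrt t/2$, so $|z-y|^2\ge \tfrac12|x-y|^2 - t/4$ and the Gaussian factors at points of $Q$ are comparable to the one at $(t,x)$ after enlarging $C'$. This yields $|\nabla_xP_t(x,y)|\le Ct^{-(d+1)/2}e^{-|x-y|^2/(Ct)}$. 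Since $t\le1$, the factor $e^{-\lambda_1 t/2}$ is harmless.

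\emph{Large times, $t>1$.} By \eqref{semigroup}, $P_t(x,y)=\int_D P_{1/2}(x,z)P_{t-1/2}(z,y)\,\d z$. I differentiate under the integral, which is justified by the small-time bound and dominated convergence, and get
\[
|\nabla_x P_t(x,y)|\le \int_D |\nabla_xP_{1/2}(x,z)|\,P_{t-1/2}(z,y)\,\d z \le C e^{-\lambda_1 t}.
\]
Here $|\nabla_x P_{1/2}(x,\cdot)|$ is bounded by the previous step, $P_{t-1/2}\le Ce^{-\lambda_1 t}$ by Lemma \ref{lem:P}, and $|D|<\infty$. Since $D$ is bounded and $t\ge1$, we have $\exp(-|x-y|^2/(Ct))\ge \exp(-\diam(D)^2/C)$. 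Also $e^{-\lambda_1 t/2}t^{(d+1)/2}$ is bounded on $[1,\infty)$, so $e^{-\lambda_1 t}\le C e^{-\lambda_1 t/2}t^{-(d+1)/2}e^{-|x-y|^2/(Ct)}$, which gives the claim for $t>1$.

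The main obstacle is the boundary case of the small-time step. It requires a boundary gradient (Schauder) estimate for the Dirichlet heat equation whose constants are uniform over all boundary points and all scales $r\le1$. This is exactly where $C^2$ regularity of $\partial D$ enters: after rescaling, the flattened coefficients must have a uniformly controlled modulus of continuity, and their derivatives must stay bounded. I would first try to quote this directly from the literature on heat kernel gradient bounds in $C^{1,1}$ domains, for example Q.~S.~Zhang's estimates. Failing that, I would carry out the flattening and rescaling argument above explicitly.
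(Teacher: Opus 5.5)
Your proof is correct and follows the same two-regime plan as the paper. The large-time step is essentially the paper's: you write $P_t=\int_D P_{1/2}(x,z)P_{t-1/2}(z,y)\,\d z$, differentiate under the integral, and trade $e^{-\lambda_1 t}$ for $e^{-\frac12\lambda_1 t}t^{-(d+1)/2}$ times a Gaussian factor that is bounded below on the bounded domain.

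The difference is in the small-time step. Your first choice, quoting Zhang, is exactly what the paper does. It uses the logarithmic gradient bound of Theorem 2.1 in \cite{Z06}: $|\nabla_x\log P_t(x,y)|\le c/d(x)$ if $d(x)\le\sqrt t$, and $|\nabla_x\log P_t(x,y)|\le \frac{c}{\sqrt t}(1+|x-y|/\sqrt t)$ otherwise. It then multiplies by $P_t$. Near $\partial D$ it relies on the boundary-sensitive upper bound of Lemma \ref{lem:P} (with $\nu=1$, $f_1\asymp d$), whose factor $1\wedge d(x)/\sqrt t$ cancels the $1/d(x)$. Away from $\partial D$ it uses $ze^{-z^2}\lesssim e^{-z^2/2}$.

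Your fallback, the rescaled interior/boundary Schauder estimate $|\nabla_xP_t(x,y)|\lesssim t^{-1/2}\sup_Q P$, is self-contained modulo textbook parabolic theory. It needs only the plain Gaussian bound $P_s(z,y)\lesssim s^{-d/2}e^{-|z-y|^2/(Cs)}$, with no boundary decay factor. The price is that you must argue uniformity of the boundary constants over all boundary points and scales, whereas the paper's route is a two-line computation once \cite{Z06} is accepted.

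When you write it out, choose the case split and radii so that every cylinder stays inside a single boundary chart and away from $s=0$. For example, use the interior estimate when $d(x)\ge r/4$. Otherwise, use the flat-boundary estimate on a cylinder of radius $r/2$ centered at a nearest boundary point; this cylinder still lies inside your $Q$. With your threshold $d(x)<r$, a boundary cylinder containing $x$ in its inner half need not fit in $Q$, and enlarging it with $r=\sqrt t/2$ would reach $s=0$. For $t$ near $1$, where $r$ may exceed the chart size, fix the radius at the chart scale instead.

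Your remark about the missing multiplicative constant is also right. The paper's own proof only establishes the bound up to a constant (see \eqref{grad:log:P} and the final display), and that is all that is used later.
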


\begin{proof}
By Theorem 2.1 of \cite{Z06}, when $D$ is a bounded $C^2$ domain, there exists $c>0$ such that
\begin{align*}
	|\nabla_x \log P_t(x,y)| \le 
	\begin{cases}
	\frac{c}{d(x)} & \text{if $d(x) \le \sqrt{t}$,}\\
	\frac{c}{\sqrt{t}} \left( 1+ \frac{|x-y|}{\sqrt{t}}\right) & \text{if $d(x)>\sqrt{t}$,}
	\end{cases}
\end{align*}
uniformly for all $t \in (0,1]$ and $x,y \in D$.
It follows that
\begin{align*}
	|\nabla_x P_t(x,y)| \le 
	\begin{cases}
	\frac{c}{d(x)}P_t(x,y) & \text{if $d(x) \le \sqrt{t}$,}\\
	\frac{c}{\sqrt{t}} \left( 1+ \frac{|x-y|}{\sqrt{t}}\right)P_t(x,y) & \text{if $d(x)>\sqrt{t}$,}
	\end{cases}
\end{align*}
uniformly for all $t \in (0,1]$ and $x \in D$.
Using Lemma \ref{lem:P} in both cases and using also the inequality $z e^{-z^2} \lesssim e^{-z^2/2}$ for $z \ge 0$ in the  $d(x)>\sqrt{t}$ case, we deduce that
\begin{align}\label{grad:log:P}
	|\nabla_x P_t(x,y)| \lesssim \frac{e^{-\lambda_1 t}}{t^{(d+1)/2}} \exp\left( - \frac{|x-y|^2}{2C_2 t} \right),
\end{align}
uniformly for all $t \in (0,1]$ and $x,y \in D$.
It remains to show the same estimate for $t\ge 1$.
In this case, we use the semigroup property \eqref{semigroup} to write
\[
	\nabla_x P_t(x,y) = \int_D \nabla_x P_{1/2}(x,z) P_{t-1/2}(z,y) \, \d z
\]
for all $t \ge 1$ and $x,y \in D$, where the exchange of $\nabla_x$ and the integral can be justified using the dominated convergence theorem and the gradient estimate for $t=1/2$ established above.
Then, by \eqref{grad:log:P} and Lemma \ref{lem:P}, we have
\begin{align*}
	|\nabla_x P_t(x,y)| &\lesssim e^{-\lambda_1(t-1/2)} \int_D  \exp\left( - \frac{|x-z|^2}{C_2} \right) \, \d z\\
	& \lesssim e^{-\lambda_1 t} \lesssim \frac{e^{-\frac12\lambda_1 t}}{t^{(d+1)/2}} \exp\left( - \frac{|x-y|^2}{2C_2t} \right),
\end{align*}
uniformly for all $t \ge 1$ and $x,y \in D$, where the last inequality holds because $e^{-\lambda_1 t} \lesssim e^{-\frac12 \lambda_1 t}\,t^{-(d+1)/2}$ for $t \ge 1$ and the exponent factor is bounded below for all $t \ge 1$ and $x,y \in D$.
\end{proof}

\subsection{Fractional Laplacian}

For any $\alpha>0$, the fractional Laplacian $(-\Delta)^{-\alpha}$ is the bounded linear operator on $L^2(D)$ defined by
\begin{align}\label{frac:Lapl}
    (-\Delta)^{-\alpha} \phi = \sum_{n=1}^\infty \lambda_n^{-\alpha}\langle \phi,f_n \rangle_{L^2(D)}\, f_n \qquad \forall \phi \in L^2(D).
\end{align}
Following \cite{BC23, BCHOTW}, we define the Schwartz space on $D$ by
\begin{align}\label{S(D)}
	\mathcal{S}(D) = \left\{ \phi \in C_0(D) : \forall k \ge 0, \lim_{n\to \infty} n^k\left|\langle \phi, f_n \rangle_{L^2(D)}\right| = 0 \right\}.
\end{align}

\begin{lemma}
Let $D \subset \R^d$ be a bounded Lipschitz domain.
    For any $\alpha>0$, the operator $(-\Delta)^{-\alpha}$ has a positive, positive definite kernel given by
    \begin{align}\label{G}
        G_{\alpha}(x,y) = \frac{1}{\Gamma(\alpha)} \int_0^\infty r^{\alpha-1} P_r(x,y) \, \d r \quad \forall x, y \in D,
    \end{align}
    in the sense that for any $\phi \in \mathcal{S}(D)$,
    \begin{align}\label{frac:Lapl:ker}
        (-\Delta)^{-\alpha} \phi(x) = \int_D G_\alpha(x,y) \phi(y) \, \d y, \quad \forall x \in D.
    \end{align}
    In particular, for any $\phi, \psi \in \mathcal{S}(D)$, we have
    \begin{align}\label{G:id}
        \big\langle (-\Delta)^{-\alpha/2}\phi \,, (-\Delta)^{-\alpha/2}\psi \big\rangle_{L^2(D)} = \int_D \int_D \phi(y) G_{\alpha}(y,z) \psi(z) \, \d y\, \d z.
    \end{align}   
\end{lemma}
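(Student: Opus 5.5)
The plan is to read $G_\alpha$ off the eigenfunction expansion \eqref{P} via the Gamma-function identity $\lambda^{-\alpha} = \Gamma(\alpha)^{-1}\int_0^\infty r^{\alpha-1}\e^{-\lambda r}\,\d r$. Positivity and finiteness of $G_\alpha(x,y)$ come first. By Lemma \ref{lem:P}, $P_r(x,y)>0$, so the integral in \eqref{G} is a well-defined element of $(0,\infty]$. For finiteness when $x\ne y$, I split the integral at $r=1$.
\begin{itemize}
\item For $r\ge 1$, the upper bound in \eqref{P:bd} gives $P_r(x,y)\lesssim \e^{-\lambda_1 r}$, which is integrable against $r^{\alpha-1}$.
\item For $r\le 1$, the same bound gives $P_r(x,y)\lesssim r^{-d/2}\e^{-|x-y|^2/(C_2 r)}$. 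This decays faster than any power as $r\to 0$, so it is integrable against $r^{\alpha-1}$ for every $\alpha>0$.
\end{itemize}
The same computation shows that $\int_D G_\alpha(x,y)\,\d y<\infty$ uniformly in $x$. Indeed, integrating the Gaussian bound in $y$ gives $\int_D P_r(x,y)\,\d y\lesssim \e^{-\lambda_1 r}\wedge 1$, and Tonelli applies. So $G_\alpha(x,\cdot)\in L^1(D)$, and the right-hand side of \eqref{frac:Lapl:ker} is finite for bounded $\phi$; this covers $\phi\in\mathcal S(D)\subset C_0(D)$. Note that $G_\alpha(x,x)$ may be infinite when $\alpha\le d/2$; this is harmless since the diagonal has measure zero.

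Next comes the kernel identity \eqref{frac:Lapl:ker}. For $\phi\in\mathcal S(D)$, define $P_r\phi(x)=\int_D P_r(x,y)\phi(y)\,\d y$. By \eqref{P} and uniform absolute convergence, $P_r\phi(x)=\sum_n \e^{-\lambda_n r}\langle\phi,f_n\rangle f_n(x)$ for each $r>0$. By Fubini, which is justified by the $L^1$ bound above and boundedness of $\phi$,
\[
\int_D G_\alpha(x,y)\phi(y)\,\d y=\frac1{\Gamma(\alpha)}\int_0^\infty r^{\alpha-1}\sum_n \e^{-\lambda_n r}\langle\phi,f_n\rangle f_n(x)\,\d r .
\]
I then interchange sum and integral. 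Since $\sup_x|f_n(x)|\lesssim\lambda_n^{d/4}\lesssim n^{1/2}$ and $\langle\phi,f_n\rangle$ decays faster than any power of $n$, the following bound holds:
\[
\sum_n\int_0^\infty r^{\alpha-1}\e^{-\lambda_n r}|\langle\phi,f_n\rangle||f_n(x)|\,\d r=\Gamma(\alpha)\sum_n\lambda_n^{-\alpha}|\langle\phi,f_n\rangle||f_n(x)|<\infty .
\]
Fubini–Tonelli therefore gives $\sum_n\lambda_n^{-\alpha}\langle\phi,f_n\rangle f_n(x)$. This series converges absolutely and uniformly, so it equals the $L^2$ expansion \eqref{frac:Lapl} pointwise, where the continuous representative is used.

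Positive definiteness and \eqref{G:id} follow quickly. Both $(-\Delta)^{-\alpha/2}\phi$ and $(-\Delta)^{-\alpha/2}\psi$ are given by \eqref{frac:Lapl} with exponent $\alpha/2$. By Parseval, their inner product is $\sum_n\lambda_n^{-\alpha}\langle\phi,f_n\rangle\langle\psi,f_n\rangle=\langle(-\Delta)^{-\alpha}\phi,\psi\rangle$. Applying \eqref{frac:Lapl:ker} and then Fubini gives the double integral; Fubini applies because $\iint G_\alpha|\phi||\psi|<\infty$ by the $L^1$ bound. Taking $\psi=\phi$ shows that the quadratic form is $\sum_n\lambda_n^{-\alpha}\langle\phi,f_n\rangle^2\ge 0$, which is the positive definiteness.

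The main obstacle is the justification of the interchanges near the diagonal and near $r=0$. There the Gaussian bound of Lemma \ref{lem:P} must be used instead of the eigen-expansion, since $\sum_n \e^{-\lambda_n r}|f_n(x)||f_n(y)|$ is not integrable in $r$ near $0$ uniformly. This is why the proof first passes through $P_r\phi$, which is controlled by the heat kernel bound, and only then expands in eigenfunctions. The rapid decay of $\langle\phi,f_n\rangle$ built into $\mathcal S(D)$ is what makes the second interchange legitimate.
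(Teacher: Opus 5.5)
Your proposal is correct and follows essentially the same route as the paper: convergence and positivity of \eqref{G} come from the heat kernel bounds of Lemma \ref{lem:P}, the kernel identity from \eqref{P} combined with the Gamma-function identity, and \eqref{G:id} together with positive definiteness from orthonormality of $\{f_n\}$. Your version additionally supplies the Fubini and Tonelli justifications that the paper leaves implicit, via the bound $\sup_x\int_D G_\alpha(x,y)\,\d y<\infty$ and the rapid decay of $\langle\phi,f_n\rangle$.
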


\begin{proof}
Lemma \ref{lem:P} implies that the integral in \eqref{G} is convergent and $G_\alpha(x,y)>0$ for any distinct $x,y \in D$.
The relation \eqref{frac:Lapl:ker} follows from \eqref{G}, \eqref{P}, and the identity 
$\lambda^{-\alpha} = \frac{1}{\Gamma(\alpha)}\int_0^\infty r^{\alpha-1} e^{-\lambda r}\, \d r$ for $\alpha,\lambda>0$.
As the kernel of the positive operator $(-\Delta)^{-\alpha}$, $G_\alpha(x,y)$ is positive definite.
Finally, \eqref{G:id} follows from \eqref{frac:Lapl} and the orthonormality of $\{f_n\}_{n\ge1}$.
\end{proof}

\begin{lemma}\label{lem:G:bd}
Let $D \subset \R^d$ be a bounded Lipschitz domain.
For any $\alpha>0$, there exist $C,C'>0$ such that
\[
	C f_1(x)f_1(y) K_{d,\alpha}(|x-y|) \le  G_\alpha(x,y) \le C' K_{d,\alpha}(|x-y|)
\]
for all $x,y \in D$, where $f_1$ is the first Dirichlet eigenfunction and
\[
	K_{d,\alpha}(|x-y|) = \begin{cases}
	|x-y|^{2\alpha-d}& \text{if $0<\alpha<d/2$,}\\
	\log_+(1/|x-y|) & \text{if $\alpha=d/2$,}\\
	1 & \text{if $\alpha>d/2$.}
	\end{cases}
\]
\end{lemma}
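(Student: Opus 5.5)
The plan is to insert the two-sided heat kernel bounds of Lemma \ref{lem:P} into the subordination formula \eqref{G}, $G_\alpha(x,y)=\Gamma(\alpha)^{-1}\int_0^\infty r^{\alpha-1}P_r(x,y)\,\d r$. We split the integral at $r=1$. Write $\delta=|x-y|$, which satisfies $\delta\le\diam D$.

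For the upper bound, we first drop the boundary factors in \eqref{P:bd}, since each is at most $1$. For $r\ge1$ the integrand is then at most $c_2 r^{\alpha-1}\e^{-\lambda_1 r}$. This piece contributes a constant, and since $K_{d,\alpha}(\delta)$ is bounded below on $D$ (because $\delta\le \diam D$), it is $\lesssim K_{d,\alpha}(\delta)$. For $r\in(0,1]$ the integrand is at most $c_2 r^{\alpha-1-d/2}\exp(-\delta^2/(C_2 r))$. We substitute $r=\delta^2 s$, which gives
\[
\delta^{2\alpha-d}\int_0^{1/\delta^2} s^{\alpha-1-d/2}\e^{-1/(C_2 s)}\,\d s .
\]
The factor $\e^{-1/(C_2 s)}$ makes the integral converge at $s=0$, so the behaviour is decided at the upper limit. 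If $\alpha<d/2$, the integral converges at infinity, and we get $\lesssim\delta^{2\alpha-d}$. If $\alpha=d/2$, the integral is $\lesssim \log_+(1/\delta)+1$. If $\alpha>d/2$, it is $\lesssim \delta^{-(2\alpha-d)}$, which gives a bounded total. Since $K_{d,\alpha}\gtrsim 1$ on $D$, in each case the sum of the two pieces is $\lesssim K_{d,\alpha}(\delta)$.

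For the lower bound, we keep only the range $r\in(0,1]$, which is legitimate because the integrand is positive. There the lower bound in \eqref{P:bd} has the factor
\[
\Bigl(1\wedge \tfrac{f_1(x)}{r^{\mu/2}}\Bigr)\Bigl(1\wedge \tfrac{f_1(y)}{r^{\mu/2}}\Bigr).
\]
Since $f_1$ is bounded, we have $1\wedge (f_1(x) r^{-\mu/2}) \ge f_1(x)/(\|f_1\|_\infty\vee 1)$ for $r\le1$, using $r^{-\mu/2}\ge1$. So the product is $\gtrsim f_1(x)f_1(y)$, and $\e^{-\lambda_1 r}\ge \e^{-\lambda_1}$ on this range. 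This reduces the problem to bounding $\int_0^1 r^{\alpha-1-d/2}\exp(-\delta^2/(C_1 r))\,\d r$ from below by $K_{d,\alpha}(\delta)$.

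The case split needs some care, and it is the only mildly delicate point.
\begin{itemize}
\item If $\alpha>d/2$, restrict to $r\in[1/2,1]$. There $\exp(-\delta^2/(C_1r))\ge \exp(-2(\diam D)^2/C_1)$, so the integral is bounded below by a positive constant.
\item If $\alpha\le d/2$ and $\delta\ge 1/2$, the same argument works, because $K_{d,\alpha}(\delta)$ is bounded above for $\delta\in[1/2,\diam D]$.
\item If $\alpha\le d/2$ and $\delta<1/2$, substitute $r=\delta^2 s$ again. This gives $\delta^{2\alpha-d}\int_0^{1/\delta^2}s^{\alpha-1-d/2}\e^{-1/(C_1 s)}\,\d s$, and we restrict to $s\in[1,1/\delta^2]$, where $\e^{-1/(C_1 s)}\ge \e^{-1/C_1}$. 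For $\alpha<d/2$, the integral over $[1,4]\subset[1,1/\delta^2]$ is a positive constant. For $\alpha=d/2$, the integral over $[1,1/\delta^2]$ is $\int s^{-1}\,\d s=2\log(1/\delta)\gtrsim \log_+(1/\delta)$, which is valid since $\delta<1/2$.
\end{itemize}

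Combining the three cases gives $G_\alpha(x,y)\ge C f_1(x)f_1(y)K_{d,\alpha}(\delta)$.

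The main obstacle is bookkeeping rather than a genuine difficulty. It consists of handling the logarithmic case uniformly and making sure constants do not degenerate, which the restriction to $\delta<1/2$ and the bound $\delta\le\diam D$ take care of. The boundary factors cause no trouble because we only need the crude product $f_1(x)f_1(y)$ in the lower bound.
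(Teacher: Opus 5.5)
Your proposal is correct and follows essentially the same route as the paper: insert the two-sided estimates of Lemma \ref{lem:P} into the subordination formula \eqref{G}, drop the boundary factors for the upper bound, and use boundedness of $f_1$ to reduce them to $f_1(x)f_1(y)$ for the lower bound. The only difference is cosmetic: the paper splits the $r$-integral at $|x-y|^2$ and $2(\diam D)^2$ and bounds from below on $[|x-y|^2,2(\diam D)^2]$, whereas you split at $r=1$, rescale $r=\delta^2 s$, and treat $\delta\ge 1/2$ and $\delta<1/2$ separately.
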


\begin{proof}
Let $M$ be the diameter of $D$.
By \eqref{G} and the upper bound in Lemma \ref{lem:P},
\begin{align*}
	&G_\alpha(x,y) 
	\lesssim \int_0^\infty \frac{r^{\alpha-1} e^{-\lambda_1 r}}{1\wedge r^{d/2}} e^{-\frac{|x-y|^2}{C_2 r}}\, \d r\\
	& \lesssim \int_0^{|x-y|^2} r^{\alpha-d/2-1} e^{-\frac{|x-y|^2}{C_2r}} \, \d r + \int_{|x-y|^2}^{2M^2} r^{\alpha-d/2-1} \, \d r + \int_{2M^2}^\infty r^{\alpha-1} e^{-\lambda_1 r} \, \d r.
\end{align*}
For the first term, we may use the fact that $\sup_{0<z \le 1} z^{\alpha-d/2-1}e^{-1/z} < \infty$. Then, it follows that
\[
	G_\alpha(x,y) \lesssim |x-y|^{2\alpha-d} + K_{d,\alpha}(|x-y|) + 1 \lesssim K_{d,\alpha}(|x-y|).
\]
To show the lower bound, we use the lower bound in Lemma \ref{lem:P} and deduce as follows:
\begin{align*}
	G_\alpha(x,y) &\gtrsim (1\wedge f_1(x))(1\wedge f_1(y)) \int_0^\infty \frac{r^{\alpha-1} e^{-\lambda_1 r}}{1\wedge r^{d/2}} e^{-\frac{|x-y|^2}{C_1 r}}\, \d r\\
	& \gtrsim f_1(x)f_1(y) \int_{|x-y|^2}^{2M^2} r^{\alpha-d/2-1} \, \d r \gtrsim f_1(x) f_1(y) K_{d,\alpha}(|x-y|).
\end{align*}
This completes the proof.
\end{proof}

\subsection{Gaussian noise and stochastic integrals}

Let $\alpha > 0$.
The Gaussian noise $W_\alpha$ in \eqref{eq:SHE} is defined as follows.
Define the Hilbert space $\mathcal{H}^\alpha(D)$ as the completion of $\mathcal{S}(D)$ with respect to the inner product
\begin{align}\begin{split}\label{ip:alpha}
    &\langle \phi\,, \psi \rangle_{\alpha}
    := \sum_{n=1}^\infty \lambda_n^{-\alpha} \langle \phi\,, f_n \rangle_{L^2(D)} \langle \psi\,, f_n \rangle_{L^2(D)}\\
    &= \big\langle (-\Delta)^{-\alpha/2} \phi\,, (-\Delta)^{-\alpha/2} \psi \big\rangle_{L^2(D)}
    = \int_D \int_D \phi(y) G_{\alpha}(y,z) \psi(z) \, \d y \, \d z,
\end{split}\end{align}
where the last two identities are due to \eqref{frac:Lapl} and \eqref{G:id}.
Next, define the Hilbert space
\begin{equation*}\label{eq:Hilbertspace}
    \mathcal{H}_{\alpha}(D) = L^2(\R_+\,; \mathcal{H}^{\alpha}(D)) = \left\{ f: \R_+ \to \mathcal{H}^\alpha(D) : \int_0^\infty \|f(s\,,\cdot)\|_{\alpha}^2\, \d s < \infty \right\}.
\end{equation*}
The noise $W_\alpha =\{ W_{\alpha}(\phi)\,; \phi \in \mathcal{H}_{\alpha}(D) \}$ is defined as a centered isonormal Gaussian process on a complete probability space $(\Omega, \mathcal{F}, \P)$ with covariance
\begin{align}\begin{split}\label{W:corr:WfWg}
    &\E[W_{\alpha}(f)W_{\alpha}(g)] = \int_0^\infty \langle f(s,\cdot), g(s,\cdot) \rangle_{\alpha} \, \d s\\
    &= \frac{1}{\Gamma(\alpha)} \int_0^\infty \d s \int_0^\infty \d r \, r^{\alpha-1} \int_D \int_D \d y\, \d z \, f(s,y) P_r(y,z) g(s,z)
\end{split}\end{align}
for any $f, g \in \mathcal{H}_\alpha(D)$, where the last equality follows from \eqref{G} and \eqref{ip:alpha}.
For any $t>0$ and $x \in D$, define $p_{t,x}$ by
\begin{align}\label{p}
    p_{t,x}(s,y) = P_{t-s}(x,y) \1_{(0,t)}(s) \qquad \forall s \in (0,t), y \in D.
\end{align}
Hence, \eqref{eq:mild} can be interpreted as $u(t,x) = W_\alpha(p_{t,x})$ whenever it is well defined.

\begin{lemma}\label{lem:p:S(D)}
For any $t>s>0$ and $x \in D$, $p_{t,x}(s,\cdot) \in \mathcal{S}(D)$.
\end{lemma}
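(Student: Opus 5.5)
Fix $t>s>0$, $x\in D$, and set $\tau=t-s>0$ and $\phi(y)=p_{t,x}(s,y)=P_\tau(x,y)$. By the definition \eqref{S(D)} of $\mathcal S(D)$, two things have to be checked: that $\phi\in C_0(D)$, and that its Fourier coefficients $\langle\phi,f_n\rangle_{L^2(D)}$ decay faster than any power of $n$.

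\textbf{Fourier coefficients.} These are immediate from the eigenfunction expansion \eqref{P}. Since the series converges uniformly in $y$ for fixed $\tau>0$, we may integrate term by term against $f_n$ and use orthonormality:
\[
\langle\phi,f_n\rangle_{L^2(D)}=\sum_{m\ge1}\e^{-\lambda_m\tau}f_m(x)\langle f_m,f_n\rangle_{L^2(D)}=\e^{-\lambda_n\tau}f_n(x).
\]
We have $|f_n(x)|\lesssim\lambda_n^{d/4}$, and Weyl's law gives $\lambda_n\asymp n^{2/d}$. Hence
\[
n^k|\langle\phi,f_n\rangle|\lesssim n^{k+1/2}\,\e^{-c\,n^{2/d}\tau}\to0 \quad\text{for every } k\ge0.
\]

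\textbf{Continuity.} Each $f_n$ is continuous on $D$, since $f_n\in C^\infty(D)$. The series \eqref{P} converges uniformly and absolutely in $y\in D$, because $\sum_n\e^{-\lambda_n\tau}\lambda_n^{d/2}<\infty$ by Weyl's law. So $\phi$ is a uniform limit of continuous functions and is therefore continuous on $D$.

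\textbf{Vanishing at the boundary.} I expect this to be the only genuine point, since continuity of the individual $f_n$ up to $\partial D$ is not given for a Lipschitz domain. I would use the upper bound of Lemma \ref{lem:P}:
\[
0\le\phi(y)\le c_2\,\bigl(1\wedge f_1(y)/(1\wedge\tau^{\nu/2})\bigr)\,\e^{-\lambda_1\tau}/(1\wedge\tau^{d/2}),
\]
so that $\phi(y)\lesssim_\tau f_1(y)$. It then suffices that $f_1(y)\to0$ as $d(y)\to0$. This holds for bounded Lipschitz domains, because the first Dirichlet eigenfunction extends continuously to $\overline D$ with zero boundary values (Lipschitz domains are regular for the Dirichlet problem). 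In the $C^{1,\gamma}$ case it is explicit from $f_1\asymp d(x)$ in Lemma \ref{lem:P}. If one prefers to avoid boundary regularity of $f_1$, an alternative route is the semigroup property $\phi(y)=\int_D P_{\tau/2}(x,z)P_{\tau/2}(z,y)\,\d z$ combined with the same kernel bound. Either way, $\phi$ is continuous and vanishes at $\partial D$, so $\phi\in C_0(D)$, which together with the coefficient decay completes the proof.
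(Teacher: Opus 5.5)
Your proof is correct and follows the paper's argument for the coefficient decay: the identity $\langle P_{t-s}(x,\cdot),f_n\rangle_{L^2(D)}=\e^{-\lambda_n(t-s)}f_n(x)$, combined with $\sup_D|f_n|\lesssim\lambda_n^{d/4}$ and Weyl's law. The paper does not explicitly check $p_{t,x}(s,\cdot)\in C_0(D)$, and your verification of it is a correct addition: continuity from uniform convergence of \eqref{P}, and vanishing at the boundary from $P_{t-s}(x,y)\lesssim f_1(y)$ (Lemma \ref{lem:P}) together with $f_1$ vanishing continuously at $\partial D$. Your suggested semigroup alternative would not actually bypass the boundary behavior of $f_1$, since the same kernel bound still carries the factor $f_1(y)$, but your main argument does not need it.
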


\begin{proof}
Fix $t>s>0$ and $x \in D$.
By \eqref{P} and orthonormality of $\{f_n\}_{n \ge 1}$, for any $k \ge 0$,
\begin{align*}
	n^k \left|\langle p_{t,x}(s,\cdot), f_n \rangle_{L^2(D)}\right|
	& = n^k \left| \langle P_{t-s}(x,\cdot), f_n\rangle_{L^2(D)}\right|
	= n^k |e^{-\lambda_n(t-s)} f_n(x)|\\
	&\lesssim n^k e^{-\lambda_n(t-s)} \lambda_n^{d/4} \to 0 \quad \text{as $n \to \infty$,}
\end{align*}
where we have used the fact that $\sup_{x \in D}|f_n(x)|\lesssim \lambda_n^{d/4}$ (see \cite[Proposition 5]{F95}) to obtain the last inequality and Weyl's law $\lambda_n \asymp n^{2/d}$ (see \cite[Corollary 3.15]{FLW}) to conclude the limit as $n \to \infty$.
This shows that $p_{t,x}(s,\cdot) \in \mathcal{S}(D)$.
\end{proof}

\section{Existence and variance bounds}\label{s:exist}

In this section, we establish a necessary and sufficient condition for the existence of solution and derive variance bounds.
We start with the following lemma.

\begin{lemma}\label{lem:r-integral}
Fix $\alpha>0$ and $\lambda>0$.
Then, there exists $C>0$ such that
\[
	\int_0^\infty \frac{r^{\alpha-1} e^{-\lambda(2s+r)}}{1 \wedge (2s+r)^{d/2}} \, \d r
	\le \begin{cases}
	C e^{-2\lambda s}(1+ s^{\alpha-d/2}) & \text{if $\alpha<d/2$,}\\
	C e^{-2\lambda s}\log(e + 1/s) & \text{if $\alpha=d/2$,}\\
	C e^{-2\lambda s} & \text{if $\alpha>d/2$,}
	\end{cases}
\]
uniformly for all $s>0$.
\end{lemma}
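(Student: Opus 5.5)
We pull out the factor $e^{-2\lambda s}$ at once and bound
\[
I(s):=\int_0^\infty \frac{r^{\alpha-1} e^{-\lambda r}}{1\wedge (2s+r)^{d/2}}\,\d r .
\]
We then split into the regimes $s\ge 1$ and $0<s<1$, and split the $r$-integral according to the size of $2s+r$.

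\emph{Case $s\ge1$.} Here $2s+r\ge 2>1$ for every $r>0$, so $1\wedge(2s+r)^{d/2}=1$. Hence
\[
I(s)=\int_0^\infty r^{\alpha-1}e^{-\lambda r}\,\d r=\Gamma(\alpha)\lambda^{-\alpha}.
\]
This is a constant, and in each of the three cases the right-hand side of the claimed bound (divided by $e^{-2\lambda s}$) is at least $1$. So this case is done.

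\emph{Case $0<s<1$.} Split $r$ into three ranges.
\begin{itemize}
\item[(i)] $r\ge 1$. Then $2s+r\ge1$, the denominator equals $1$, and the integral is at most $\Gamma(\alpha)\lambda^{-\alpha}$.
\item[(ii)] $r<1$. Bound $e^{-\lambda r}\le1$ and $1\wedge(2s+r)^{d/2}\ge (2s+r)^{d/2}/3^{d/2}$, since $2s+r<3$. This leaves $\int_0^1 r^{\alpha-1}(2s+r)^{-d/2}\,\d r$, which we split further at $r=s$:
\begin{itemize}
\item[(a)] $0<r<s$. Here $2s+r\ge 2s$, giving at most $(2s)^{-d/2}s^\alpha/\alpha\lesssim s^{\alpha-d/2}$.
\item[(b)] $s\le r<1$. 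Here $2s+r\ge r$, giving at most $\int_s^1 r^{\alpha-d/2-1}\,\d r$.
\end{itemize}
\end{itemize}

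The main computation is the integral in (b), which is the only place the three cases separate:
\begin{itemize}
\item If $\alpha<d/2$, it is at most $s^{\alpha-d/2}/(d/2-\alpha)$.
\item If $\alpha=d/2$, it equals $\log(1/s)\le\log(e+1/s)$.
\item If $\alpha>d/2$, it is at most $1/(\alpha-d/2)$.
\end{itemize}

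Combining with (a), note that $s^{\alpha-d/2}$ is bounded by $1$ on $s<1$ in the case $\alpha>d/2$, and equals $1\le\log(e+1/s)$ when $\alpha=d/2$. Adding the constant from (i), we get $I(s)\lesssim 1+s^{\alpha-d/2}$, $\lesssim\log(e+1/s)$, and $\lesssim1$ in the respective cases. Multiplying back by $e^{-2\lambda s}$ gives the claimed bounds, with constants depending only on $\alpha,\lambda,d$. No real obstacle is expected; the only care needed is at the borderline $\alpha=d/2$, where the logarithm appears.
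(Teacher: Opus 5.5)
Your proof is correct and follows essentially the same route as the paper. For large $s$ the denominator equals $1$ and the integral is a constant multiple of $e^{-2\lambda s}$. For small $s$ you split the $r$-integral at scale $s$ and at $1$, which produces the terms $s^{\alpha-d/2}$, $\int_s^1 r^{\alpha-d/2-1}\,\d r$ and a constant. The differences are cosmetic: you cut at $s=1$ and $r=s$ rather than $s=1/2$ and $r=2s$, and you factor out $e^{-2\lambda s}$ exactly instead of absorbing it into the constant for small $s$.
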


\begin{proof}
For $0<s<1/2$, we split the integral and estimate as follows:
\[
	\int_0^{2s} \frac{r^{\alpha-1}\d r}{(2s)^{d/2}}  + \int_{2s}^1 r^{\alpha-d/2-1} \, \d r + \int_1^\infty r^{\alpha-1} e^{-\lambda(2s+r)} \, \d r
	\lesssim
	\begin{cases}
	s^{\alpha-d/2} & \text{if $\alpha<d/2$,}\\
	\log(1/s) & \text{if $\alpha=d/2$,}\\
	1 & \text{if $\alpha>d/2$.}
	\end{cases}
\]
For $s\ge 1/2$, we bound the integral by
\[
	\int_0^\infty r^{\alpha-1} e^{-\lambda(2s+r)} \, \d r \propto e^{-2\lambda s}.
\]
Combine the two cases to obtain the asserted bounds.
\end{proof}

\subsection{Proof of Theorem \ref{thm:exist}}

\begin{proof}
We shall prove that $p_{t,x} \in \mathcal{H}_\alpha(D)$ for every $(t,x) \in (0\,,\infty) \times D$ if and only if \eqref{Dalang} holds. That is,
\[
	\int_0^\infty \|p_{t,x}(s,\cdot)\|_\alpha^2 \, \d s < \infty 
	\quad \Leftrightarrow \quad 
	\alpha>d/2-1.
\]
By Lemma \ref{lem:p:S(D)}, for any $t>s>0$ and $x \in D$, $p_{t,x}(s,\cdot) \in \mathcal{S}(D)$, so we may use \eqref{ip:alpha}, \eqref{G}, symmetry of $P_t$, and the semigroup property \eqref{semigroup} to write
\begin{align}
	&\int_0^\infty \|p_{t,x}(s,\cdot)\|_\alpha^2 \, \d s
        = \int_0^t \int_D \int_D P_{t-s}(x,y) G_\alpha(y,z) P_{t-s}(x,z) \, \d y \, \d z \, \d s\notag \\
        & = \frac{1}{\Gamma(\alpha)} \int_0^t \d s \int_0^\infty \d r \, r^{\alpha-1} \int_D \int_D \d y\, \d z\, P_s(x,y) P_r(y,z) P_s(z,x)\notag \\
        & = \frac{1}{\Gamma(\alpha)} \int_0^t \d s \int_0^\infty \d r \, r^{\alpha-1} P_{2s+r}(x,x).
        \label{int:p:norm-sq}
\end{align}
Lemmas \ref{lem:P} and \ref{lem:r-integral} imply that
\begin{align}
	\int_{1/4}^t \d s \int_0^\infty \d r \, r^{\alpha-1} P_{2s+r}(x,x) \lesssim \int_{1/4}^t \d s \int_0^\infty \d r \, \frac{r^{\alpha-1} e^{-\lambda_1(2s+r)}}{1 \wedge (2s+r)^{d/2}} =O(1)
\end{align}
uniformly for all $t \ge 1/4$.
Hence, it suffices to prove that
\begin{align}\label{I:finite}
	I=I(t,x) < \infty \quad \forall (t,x)\in (0,1/4)\times D
	\quad \Leftrightarrow \quad
	\alpha>d/2-1,
\end{align}
where
\[
	I=I(t,x):=\int_0^t \d s \int_0^\infty \d r \, r^{\alpha-1} P_{2s+r}(x,x).
\]
First, suppose that $\alpha>d/2-1$.
By Lemmas \ref{lem:P} and \ref{lem:r-integral}, 
\[
	I \lesssim \int_0^t \d s \int_0^\infty \d r \, \frac{r^{\alpha-1} e^{-\lambda_1(2s+r)}}{1 \wedge (2s+r)^{d/2}} \lesssim \begin{cases}
	t^{\alpha-d/2 + 1} & \text{if $d/2-1<\alpha<d/2$,}\\
	t \log_+(1/t) & \text{if $\alpha=d/2$,}\\
	t & \text{if $\alpha>d/2$,}
	\end{cases}
\]
uniformly for all $t \in (0,1/4)$. 
This proves that $I<\infty$.

Conversely, suppose that $I<\infty$. We may use the lower bound in Lemma \ref{lem:P} to deduce that
\begin{align*}
	I &\gtrsim (1\wedge f_1(x))^2 \int_0^t \d s \int_0^{2s} \d r \, \frac{r^{\alpha-1}\e^{-\lambda_1(2s+r)}}{(2s+r)^{d/2}}\\
        &\gtrsim |f_1(x)|^2\, \e^{-\lambda_1} \int_0^t \d s\, s^{-d/2} \int_0^{2s} \d r \, r^{\alpha-1}
        \gtrsim |f_1(x)|^2\, \e^{-\lambda_1}\int_0^t s^{\alpha-d/2} \, \d s,
\end{align*}
uniformly for all $t \in (0,1/4)$ and $x \in D$.
Since $I<\infty$, the last integral $\int_0^t s^{\alpha-d/2} \, \d s$ is finite, which implies that $\alpha>d/2-1$.
This proves \eqref{I:finite} and completes the proof of Theorem \ref{thm:exist}.
\end{proof}

\subsection{Variance bounds}

The proof of Theorem \ref{thm:exist} implies the following estimate:

\begin{lemma}\label{lem:var:u:1}
    Suppose $D \subset \R^d$ is a bounded Lipschitz domain. If $\alpha>d/2-1$, then 
\[
	\Var(u(t,x)) \lesssim 1 \wedge (\rho_0(t))^2
\]
uniformly for all $t>0$ and $x \in D$, where
\begin{align}\label{rho_0}
	\rho_0(t)
	:= \begin{cases}
	t^{(2\alpha-d+2)/4} & \text{if $d/2-1<\alpha<d/2$,}\\
	\sqrt{t \log_+(1/t)}& \text{if $\alpha = d/2$,}\\
	\sqrt{t} & \text{if $\alpha > d/2$.}
	\end{cases}
\end{align}
\end{lemma}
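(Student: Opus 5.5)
By the Itô isometry and the computation \eqref{int:p:norm-sq} in the proof of Theorem \ref{thm:exist}, we have
\[
	\Var(u(t,x)) = \int_0^\infty \|p_{t,x}(s,\cdot)\|_\alpha^2\,\d s = \frac{1}{\Gamma(\alpha)}\int_0^t \d s\int_0^\infty \d r\, r^{\alpha-1}P_{2s+r}(x,x).
\]
So it suffices to bound this integral from above, uniformly in $x\in D$. The upper bound in Lemma \ref{lem:P} gives
\[
	P_{2s+r}(x,x)\lesssim \frac{e^{-\lambda_1(2s+r)}}{1\wedge(2s+r)^{d/2}},
\]
after dropping the factors involving $f_1$, which are at most $1$, and noting that the Gaussian factor equals $1$ on the diagonal. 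This bound is uniform in $x$. Lemma \ref{lem:r-integral} then bounds the inner $r$-integral by $e^{-2\lambda_1 s}$ times $1+s^{\alpha-d/2}$, $\log(e+1/s)$, or $1$, according to the three ranges of $\alpha$.

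\emph{Step 1 (large $t$).} Integrate these bounds over $s\in(0,\infty)$. Each is integrable: near $0$ because $\alpha-d/2>-1$ (which is the Dalang condition \eqref{Dalang}) or because the logarithm is integrable, and at infinity because of the factor $e^{-2\lambda_1 s}$. Hence $\Var(u(t,x))\lesssim 1$ uniformly in $t>0$ and $x\in D$.

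\emph{Step 2 (small $t$).} For $t\in(0,1/4)$, integrate the same bounds over $s\in(0,t)$, exactly as in the first half of the proof of Theorem \ref{thm:exist}:
\begin{itemize}
	\item if $d/2-1<\alpha<d/2$, then $\int_0^t(1+s^{\alpha-d/2})\,\d s\lesssim t^{\alpha-d/2+1}=\rho_0(t)^2$, since $s^{\alpha-d/2}\ge 1$ for $s<1$;
	\item if $\alpha=d/2$, then $\int_0^t\log(e+1/s)\,\d s\lesssim t\log_+(1/t)$, by an elementary computation (substitute $s=tv$);
	\item if $\alpha>d/2$, the bound is simply $t$.
\end{itemize}

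\emph{Step 3 (combine).} For $t\ge 1/4$, the quantity $\rho_0(t)$ is bounded below by a positive constant, so $1\lesssim 1\wedge\rho_0(t)^2$ there. For $t<1/4$, $\rho_0(t)^2$ is bounded above, so $\rho_0(t)^2\lesssim 1\wedge\rho_0(t)^2$. Together with Steps 1 and 2 this gives $\Var(u(t,x))\lesssim 1\wedge\rho_0(t)^2$ for all $t>0$ and $x\in D$.

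The only mildly delicate point is the $\alpha=d/2$ logarithmic integral, and it is routine. I expect no real obstacle; the main thing to check is that all constants are uniform in $x$, which holds because the heat-kernel upper bound was used on the diagonal with the $f_1$ factors bounded by $1$.
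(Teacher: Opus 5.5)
Your proposal is correct and follows essentially the same route as the paper. The paper derives the lemma directly from the proof of Theorem \ref{thm:exist}, combining the identity \eqref{int:p:norm-sq}, the on-diagonal upper bound from Lemma \ref{lem:P}, and Lemma \ref{lem:r-integral}, with the only cosmetic difference being that you obtain uniform boundedness by integrating over all $s\in(0,\infty)$ instead of splitting the $s$-integral at $1/4$.
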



Next, we establish the dependence of $\Var(u(t,x))$ on the distance-to-boundary $d(x)$ defined by \eqref{d(x)}.

\begin{lemma}\label{lem:var:u:2}
Suppose $D \subset \R^d$ is a bounded $C^{1,\gamma}$ domain for some $\gamma>0$. If $\alpha>d/2-1$, then 
\[
	\Var(u(t,x)) \lesssim (d_0(x))^2
\]
uniformly for all $t>0$ and $x \in D$, where
\begin{align}\label{d_0}
d_0(x):=\begin{cases}
(d(x))^{(2\alpha-d+2)/2}  & \text{if $d/2-1<\alpha<d/2$,}\\
d(x)\sqrt{\log_+(1/d(x))} & \text{if $\alpha = d/2$,}\\
d(x)& \text{if $\alpha > d/2$.}
\end{cases}
\end{align}
\end{lemma}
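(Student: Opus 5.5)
Starting from \eqref{int:p:norm-sq}, we have
\[
\Var(u(t,x)) = \frac{1}{\Gamma(\alpha)}\int_0^t \d s\int_0^\infty \d r\, r^{\alpha-1} P_{2s+r}(x,x) \le \frac{1}{\Gamma(\alpha)}\int_0^\infty \d s\int_0^\infty \d r\, r^{\alpha-1} P_{2s+r}(x,x),
\]
so it suffices to bound the right-hand side, which does not depend on $t$, by $(d_0(x))^2$. Since $D$ is $C^{1,\gamma}$, Lemma \ref{lem:P} applies with $\nu=1$ and $f_1(x)\asymp d(x)$. This gives
\[
P_\tau(x,x)\lesssim \Big(1\wedge \frac{d(x)^2}{1\wedge \tau}\Big)\frac{e^{-\lambda_1\tau}}{1\wedge \tau^{d/2}} .
\]
The boundary factor is the one new ingredient compared with Lemma \ref{lem:var:u:1}. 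Since $D$ is bounded, $d(x)\le M$, so we may restrict attention to small $\delta:=d(x)$; for $\delta$ bounded below, the bound follows from Lemma \ref{lem:var:u:1}, whose right-hand side is bounded by $1\lesssim d_0(x)^2$.

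Substitute $\tau=2s+r$. For fixed $\tau$, the set of $(s,r)$ with $s,r>0$ and $2s+r=\tau$ contributes $\int_0^\tau r^{\alpha-1}\,\d r/2 = \tau^\alpha/(2\alpha)$. Hence the double integral is bounded by
\[
C\int_0^\infty \tau^{\alpha}\Big(1\wedge \frac{\delta^2}{1\wedge\tau}\Big)\frac{e^{-\lambda_1\tau}}{1\wedge\tau^{d/2}}\,\d\tau .
\]
We split this into three ranges.
\begin{itemize}
\item On $\tau\le\delta^2$, the integrand is at most $\tau^{\alpha-d/2}$. Since $\alpha-d/2>-1$ by \eqref{Dalang}, this range contributes $\lesssim\delta^{2(\alpha-d/2+1)}$.
\item On $\delta^2<\tau\le1$, the integrand is at most $\delta^2\tau^{\alpha-d/2-1}$. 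Integrating gives $\delta^2\cdot\delta^{2(\alpha-d/2)}$ when $\alpha<d/2$, gives $\delta^2\log(1/\delta^2)$ when $\alpha=d/2$, and gives $\delta^2$ when $\alpha>d/2$.
\item On $\tau>1$, the integrand is at most $\delta^2\tau^\alpha e^{-\lambda_1\tau}$, which contributes $\lesssim\delta^2$.
\end{itemize}
Combining the three ranges yields exactly $(d_0(x))^2$ in each of the three regimes of $\alpha$. When $\alpha<d/2$, note that $\delta^2\lesssim \delta^{2\alpha-d+2}$ because $2\alpha-d+2<2$ and $\delta$ is small.

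The only delicate point is the boundary factor. We must use the upper bound of Lemma \ref{lem:P} with the exponent $\nu=1$, which holds because the domain is $C^{1,\gamma}$; this produces the factor $(d(x)^2/\tau)\wedge1$ for small $\tau$ rather than a weaker power. Everything after that is the elementary one-dimensional splitting above, just as in Lemma \ref{lem:r-integral}. Finally, the bound is uniform in $t>0$ because we extended the $s$-integral to $(0,\infty)$ at the start.
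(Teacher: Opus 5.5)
Your proposal is correct and follows essentially the same route as the paper: reduce to small $d(x)$, collapse the $(s,r)$-integral to $\int_0^\infty \tau^{\alpha}P_\tau(x,x)\,\d\tau$, insert the $C^{1,\gamma}$ heat kernel bound with boundary factor $\bigl(1\wedge \frac{d(x)}{1\wedge\sqrt{\tau}}\bigr)^2$, and split at $\tau=d(x)^2$ and $\tau=1$. The only cosmetic difference is that you extend the $s$-integral to $(0,\infty)$ before substituting, whereas the paper substitutes first and then bounds $\int_0^{t\wedge(r/2)}(r-2s)^{\alpha-1}\,\d s$ by $\int_0^{r/2}(r-2s)^{\alpha-1}\,\d s$; both give the same $t$-independent bound.
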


\begin{proof}
Note that the case that $d(x) \ge 1$ is trivial because Lemma \ref{lem:var:u:1} implies that
\[
	\Var(u(t,x)) \lesssim 1 \lesssim (d_0(x))^2,
\]
uniformly for all $t>0$ and $x \in D$ with $d(x) \ge 1$.
Next, we consider the case that $d(x)<1$.
By \eqref{W:corr:WfWg}, \eqref{int:p:norm-sq}, the change of variable $r \to 2s+r$ and Fubini's theorem,
\begin{align*}
	\Var(u(t,x)) 
	&\propto \int_0^t \d s \int_0^\infty \d r \, r^{\alpha-1} P_{2s+r}(x,x)\\
	&= \int_0^t \d s \int_{2s}^\infty \d r \, (r-2s)^{\alpha-1} P_r(x,x)\\
	&= \int_0^\infty \left[\int_0^{t \wedge (r/2)} (r-2s)^{\alpha-1}  \d s \right] P_r(x,x) \, \d r
	\lesssim \int_0^\infty r^{\alpha} P_r(x,x) \, \d r,
\end{align*}
uniformly for all $t>0$ and $x \in D$.
Then, we may use Lemma \ref{lem:P} and the inequality $e^{-\lambda_1 r}/(1 \wedge r^{d/2}) \lesssim r^{-d/2}e^{-\frac12 \lambda_1 r}$ for all $r>0$ to deduce the following:
\begin{align*}
	&\Var(u(t,x))
	\lesssim \int_0^\infty \left( 1 \wedge \frac{d(x)}{1\wedge \sqrt{r}} \right)^2 r^{\alpha-d/2} e^{-\frac12\lambda_1 r} \, \d r\\
	& \lesssim \int_0^{d^2(x)} r^{\alpha-d/2} \, \d r + d^2(x) \int_{d^2(x)}^1 r^{\alpha-d/2-1} \, \d r + d^2(x)\int_1^\infty r^{\alpha-d/2} e^{-\frac12\lambda_1 r} \, \d r\\
	&\lesssim (d_0(x))^2
\end{align*}
uniformly for all $t>0$ and $x \in D$ with $d(x)<1$.
This completes the proof.
\end{proof}

Combining Lemmas \ref{lem:var:u:1} and \ref{lem:var:u:2} yields the following:

\begin{proposition}\label{prop:var:u}
Suppose $D \subset \R^d$ is a bounded $C^{1,\gamma}$ domain for some $\gamma>0$.
If $\alpha>d/2-1$, then
\[
	\Var(u(t,x)) \lesssim (d_0(x) \wedge \rho_0(t))^2
\]
uniformly for all $t>0$ and $x \in D$, where $\rho_0$ and $d_0$ are given by \eqref{rho_0} and \eqref{d_0}, respectively.
\end{proposition}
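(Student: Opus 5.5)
The plan is to take the minimum of the two bounds already established. Lemma \ref{lem:var:u:1} gives $\Var(u(t,x)) \lesssim 1 \wedge (\rho_0(t))^2 \le (\rho_0(t))^2$ uniformly in $t>0$ and $x \in D$. It only requires $D$ to be a bounded Lipschitz domain, and every bounded $C^{1,\gamma}$ domain is Lipschitz, so it applies here. Lemma \ref{lem:var:u:2}, which is stated under exactly the present $C^{1,\gamma}$ hypothesis, gives $\Var(u(t,x)) \lesssim (d_0(x))^2$ uniformly in $t>0$ and $x\in D$.

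Let $C$ and $C'$ be the implicit constants in these two bounds. Then
\[
	\Var(u(t,x)) \le \min\{C(\rho_0(t))^2,\, C'(d_0(x))^2\} \le (C\vee C')\,(d_0(x)\wedge \rho_0(t))^2 .
\]
Since $d_0$ and $\rho_0$ are nonnegative, $\min(a^2,b^2)=(a\wedge b)^2$, which justifies the last step. This is the asserted estimate, with a constant independent of $(t,x)$.

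The argument has no real obstacle; the only check is that the hypotheses are compatible. The two lemmas are used in their respective three regimes of $\alpha$ in \eqref{rho_0} and \eqref{d_0}, and both are stated for the full range $\alpha>d/2-1$, so the regimes line up and the result follows in every case.
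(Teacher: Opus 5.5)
Your proposal is correct and is exactly the paper's argument: the paper states the proposition as an immediate consequence of combining Lemma \ref{lem:var:u:1} with Lemma \ref{lem:var:u:2}. Lemma \ref{lem:var:u:1} applies because bounded $C^{1,\gamma}$ domains are Lipschitz, and taking the minimum of the two bounds gives the stated estimate, just as you do.
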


\subsection{A series representation}

\begin{proposition}\label{prop:spec:rep}
Suppose $D \subset \R^d$ is a bounded Lipschitz domain. 
Then, under condition \eqref{Dalang}, the solution has the representation
\begin{equation}\label{eq:spectral}
    u(t,x)=
    \sum_{n=1}^\infty
    \lambda_n^{-\alpha/2}
    \left(\int_0^t \e^{-\lambda_n(t-r)}\,\d B_n(r)\right)f_n(x),
\end{equation}
where \(\{B_n(t)\}_{n\ge1}\) are independent standard Brownian motions.
\end{proposition}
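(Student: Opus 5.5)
We define the Brownian motions directly from the noise and then expand the kernel $p_{t,x}$ in the eigenbasis. Since $\{f_n\}$ is an orthonormal basis of $L^2(D)$ and $\langle f_n, f_m\rangle_\alpha = \lambda_n^{-\alpha}\delta_{nm}$ by \eqref{ip:alpha}, the functions $\lambda_n^{\alpha/2} f_n$ are orthonormal in $\mathcal H^\alpha(D)$. (Each $f_n$ lies in $\mathcal S(D)$ trivially, because its coefficients vanish except the $n$-th.) We set
\[
	B_n(t) := \lambda_n^{\alpha/2} W_\alpha\big(\1_{[0,t]}\otimes f_n\big).
\]
By \eqref{W:corr:WfWg}, $\E[B_n(t)B_m(s)] = \lambda_n^{\alpha/2}\lambda_m^{\alpha/2}(t\wedge s)\langle f_n,f_m\rangle_\alpha = (t\wedge s)\delta_{nm}$. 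The family $\{B_n\}$ is jointly Gaussian with this covariance, so the $B_n$ are independent standard Brownian motions (after taking continuous versions). For deterministic $g\in L^2(\R_+)$, the isometry then gives $W_\alpha(g\otimes f_n) = \lambda_n^{-\alpha/2}\int_0^\infty g(r)\,\d B_n(r)$. We prove this first for step functions $g$ by linearity, and then for general $g$ by density and the isometry.

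Next we expand $p_{t,x}$. By \eqref{P}, for each $s<t$ we have $p_{t,x}(s,\cdot) = \sum_n e^{-\lambda_n(t-s)} f_n(x) f_n(\cdot)$. We need this series, viewed as a function of $(s,y)$, to converge to $p_{t,x}$ in $\mathcal H_\alpha(D)$. Let $S_N$ be the $N$-th partial sum. Orthogonality in $\mathcal H^\alpha$ gives
\[
	\|p_{t,x}-S_N\|^2_{\mathcal H_\alpha} = \sum_{n>N}\lambda_n^{-\alpha} f_n(x)^2\int_0^t e^{-2\lambda_n(t-s)}\,\d s ,
\]
and this identity is rigorous because $p_{t,x}(s,\cdot)\in\mathcal S(D)$ by Lemma \ref{lem:p:S(D)}. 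So convergence is equivalent to finiteness of the full sum $\sum_n \lambda_n^{-\alpha} f_n(x)^2 \frac{1-e^{-2\lambda_n t}}{2\lambda_n}$. This sum equals $\|p_{t,x}\|^2_{\mathcal H_\alpha}$: expand $\|p_{t,x}(s,\cdot)\|_\alpha^2=\sum_n\lambda_n^{-\alpha}e^{-2\lambda_n(t-s)}f_n(x)^2$ and apply Tonelli in $s$. The quantity $\|p_{t,x}\|^2_{\mathcal H_\alpha}$ is finite under \eqref{Dalang} by the proof of Theorem \ref{thm:exist}. Hence $S_N\to p_{t,x}$ in $\mathcal H_\alpha(D)$, and the isometry gives $W_\alpha(S_N)\to W_\alpha(p_{t,x}) = u(t,x)$ in $L^2(\Omega)$.

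Finally, $W_\alpha(S_N) = \sum_{n\le N} \lambda_n^{-\alpha/2}\big(\int_0^t e^{-\lambda_n(t-r)}\,\d B_n(r)\big) f_n(x)$ by the identification in the first step with $g(r)=e^{-\lambda_n(t-r)}\1_{(0,t)}(r)$. This gives \eqref{eq:spectral} as an $L^2(\Omega)$-convergent series. The summands are independent and centered, so the series also converges almost surely by Lévy's equivalence theorem (or the Itô–Nisio theorem).

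The main obstacle is the step in the second paragraph: justifying that the pointwise eigen-expansion of $P_{t-s}(x,\cdot)$ converges in the $\mathcal H_\alpha$ norm. This matters because near $s=t$ the expansion is not uniformly convergent. We handle it through the orthogonal structure together with monotone convergence, which turns the whole issue into finiteness of $\|p_{t,x}\|_{\mathcal H_\alpha}$, already established.
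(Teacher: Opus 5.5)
Your proof is correct, but it takes a different route from the paper and proves a stronger statement. The paper only compares covariances. It expands $\E[u(t,x)u(s,y)]=\int_0^{t\wedge s}\langle P_{t-r}(x,\cdot),P_{s-r}(y,\cdot)\rangle_\alpha\,\d r$ in the eigenbasis and observes that this matches the covariance of the right-hand side of \eqref{eq:spectral} for any family of independent Brownian motions. It then concludes that the two centered Gaussian processes have the same law.

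You instead build the Brownian motions from the noise itself, via $B_n(t)=\lambda_n^{\alpha/2}W_\alpha(\1_{[0,t]}\otimes f_n)$, and show that the eigen-expansion of $p_{t,x}$ converges in $\mathcal H_\alpha(D)$. This gives \eqref{eq:spectral} almost surely for each $(t,x)$, so the two sides are modifications of one another, which implies the paper's equality in law.

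\textbf{What each route buys.} Your route yields a representation that is measurable with respect to the noise, which is what one would need to use the series jointly with $W_\alpha$. It also makes explicit the finiteness of $\sum_n\lambda_n^{-\alpha-1}f_n(x)^2(1-e^{-2\lambda_n t})=2\|p_{t,x}\|^2_{\mathcal H_\alpha}$. The paper uses this only tacitly, both for convergence of the series and for interchanging the sum and the $\d r$-integral in its covariance computation. The paper's route is shorter and suffices for its purposes, since everything downstream (the harmonizable representation and all sample path results) depends only on the law of $u$.

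\textbf{A small point.} $f_n\in\mathcal S(D)$ is not purely a statement about coefficients, since $\mathcal S(D)\subset C_0(D)$. You can sidestep this: $C_c^\infty(D)\subset\mathcal S(D)$ is dense in $L^2(D)$ and $\|\phi\|_\alpha\le\lambda_1^{-\alpha/2}\|\phi\|_{L^2(D)}$. Hence $L^2(D)$ embeds into $\mathcal H^\alpha(D)$ with the series formula for $\langle\cdot,\cdot\rangle_\alpha$ intact. The same remark covers the paper's Lemma~\ref{lem:p:S(D)}, which also checks only coefficient decay.
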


\begin{proof}
Note that both sides of \eqref{eq:spectral} are centered Gaussian processes, so it suffices to show that they have the same covariance function.
The covariance of $u(t,x)$ can be computed as follows, using \eqref{eq:mild}, the first identities in both \eqref{ip:alpha} and \eqref{W:corr:WfWg}, and \eqref{P}:
\begin{align*}
	\E[u(t,x)u(s,y)]
	&=\int_0^{t \wedge s} \langle P_{t-r}(x, \cdot) , P_{s-r}(y,\cdot) \rangle_\alpha\, \d r\\
	&=\sum_{n=1}^\infty\lambda_n^{-\alpha}f_n(x)f_n(y)\int_0^{t\wedge s} \e^{-\lambda_n(t-r)}\e^{-\lambda_n(s-r)}\,\d r.
\end{align*}
The last expression coincides with the covariance of the right-hand side of \eqref{eq:spectral} by independence of $B_n$ and Wiener isometry.
Hence, the Gaussian processes on both sides of \eqref{eq:spectral} have the same law.
\end{proof}

\section{H\"older regularity}\label{sec:regularity}

In this section, we study temporal and spatial H\"older regularity of the solution. 
We first establish some lemmas.

\begin{lemma}\label{lem:spectral:1}
For any $a \in \R$, there exists $C>0$ such that
\begin{align*}
	\sum_{n \ge 1: \lambda_n \le R} \lambda_n^{-a} |f_n(x)|^2 \le 
	\begin{cases}
	C(1+R^{d/2-a}) & \text{if $a<d/2$,}\\
	C\log_+ R & \text{if $a=d/2$,}\\
	C & \text{if $a>d/2$,}
	\end{cases}
\end{align*}
uniformly for all $x \in D$ and $R > 0$.
\end{lemma}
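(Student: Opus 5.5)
The plan is to derive a spectral-function bound from the on-diagonal heat kernel estimate and then pass to weighted sums by Abel summation. Lemma \ref{lem:P} gives $P_t(x,x) \lesssim 1\wedge t^{-d/2}$ for $t>0$ (indeed $\lesssim t^{-d/2}$ for all $t>0$), uniformly in $x\in D$. By \eqref{P}, for every $x\in D$ and $t>0$,
\[
	\sum_{n\ge1} e^{-\lambda_n t}|f_n(x)|^2 = P_t(x,x).
\]
Let $N_x(R):=\sum_{n:\lambda_n\le R}|f_n(x)|^2$, the local spectral counting function. Taking $t=1/R$ and using $e^{-\lambda_n/R}\ge e^{-1}$ for $\lambda_n\le R$ gives
\[
	N_x(R)\le e\,P_{1/R}(x,x)\lesssim R^{d/2}
\]
uniformly in $x\in D$ and $R>0$. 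Since $\lambda_n\ge\lambda_1>0$, we also have $N_x(R)=0$ for $R<\lambda_1$.

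Next, write the target sum as a Stieltjes integral and integrate by parts:
\[
	\sum_{\lambda_n\le R}\lambda_n^{-a}|f_n(x)|^2=\int_{[\lambda_1,R]}\lambda^{-a}\,\d N_x(\lambda)
	= R^{-a}N_x(R)+a\int_{\lambda_1}^R \lambda^{-a-1}N_x(\lambda)\,\d\lambda .
\]
This identity holds for every real $a$ (for $a<0$ the second term is negative, and we drop it). Insert $N_x(\lambda)\lesssim\lambda^{d/2}$. The boundary term is $\lesssim R^{d/2-a}$, which is the right size when $a<d/2$ and bounded when $a\ge d/2$, since $R\ge\lambda_1$. The integral term is $\lesssim\int_{\lambda_1}^R\lambda^{d/2-a-1}\,\d\lambda$, which is
\begin{itemize}
\item $\lesssim R^{d/2-a}$ if $a<d/2$,
\item $\lesssim\log_+R$ if $a=d/2$,
\item bounded by $\int_{\lambda_1}^\infty\lambda^{d/2-a-1}\,\d\lambda<\infty$ if $a>d/2$.
\end{itemize}
When $R<\lambda_1$ the sum is empty, which accounts for the constants and the "$1+$". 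Every constant depends only on $a$, $d$, $D$, so the bounds are uniform in $x$ and $R$.

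An alternative for the case $a>d/2$ is to bound the full sum directly by $\int_0^\infty t^{a-1}P_t(x,x)\,\d t/\Gamma(a)$. This gives $G_a(x,x)$, which is finite by Lemma \ref{lem:G:bd} when $a>d/2$, and $P_t(x,x)\lesssim t^{-d/2}e^{-\lambda_1 t}$ makes it uniform in $x$.

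The only delicate point is that the bound must be uniform in $x$. The sup-norm bound $\|f_n\|_\infty\lesssim\lambda_n^{d/4}$ combined with Weyl's law would lose a factor $R^{d/2}$. That is why the argument goes through the on-diagonal heat kernel bound from Lemma \ref{lem:P}, which controls $N_x(R)$ uniformly. Once that bound is in hand, the rest is routine summation by parts.
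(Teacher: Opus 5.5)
Your proof is correct and essentially the paper's: both bound the local counting function by $\sum_{\lambda_n\le R}|f_n(x)|^2\le e\,P_{1/R}(x,x)\lesssim R^{d/2}$ via Lemma \ref{lem:P}, then pass to the weights $\lambda_n^{-a}$ — the paper by a dyadic decomposition in $\lambda_n$, you by the equivalent Abel summation. (One harmless slip: Lemma \ref{lem:P} gives $P_t(x,x)\lesssim 1\vee t^{-d/2}$, not $1\wedge t^{-d/2}$, but the bound $P_t(x,x)\lesssim t^{-d/2}$ that you actually use is correct.)
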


\begin{proof}
First consider $a=0$. By Lemma \ref{lem:P},
\[
	\sum_{n: \lambda_n \le R} |f_n(x)|^2 \le e \sum_{n: \lambda_n \le R} e^{-\lambda_n/R}|f_n(x)|^2
	\le e \, P_{1/R}(x,x) \lesssim R^{d/2}.
\]
Next, we let $k_0 \in \mathbb{Z}$ be such that $2^{k_0}< \lambda_1\le 2^{k_0+1}$.
Then, for any $a \in \R$, a dyadic decomposition, together with the preceding, implies that
\begin{align*}
	\sum_{n \ge 1: \lambda_n \le R} \lambda_n^{-a} |f_n(x)|^2 
	&\le \sum_{k\ge k_0: 2^k \le R} 2^{-a k}\sum_{n\ge 1: 2^{k}< \lambda_n \le 2^{k+1}} |f_n(x)|^2\\
	&\lesssim \sum_{k \ge k_0: 2^k \le R} 2^{(d/2-a)k}
	\lesssim 
	\begin{cases}
	1+R^{d/2-a} & \text{if $a<d/2$,}\\
	\log_+ R & \text{if $a=d/2$,}\\
	1 & \text{if $a>d/2$.}
	\end{cases}
\end{align*}
This completes the proof.
\end{proof}

\begin{lemma}\label{lem:spectral:2}
For any $a > d/2$, there exists $C>0$ such that
\begin{align*}
	\sum_{n \ge 1: \lambda_n \ge R} \lambda_n^{-a} |f_n(x)|^2 \le C R^{d/2-a}
\end{align*}
uniformly for all $x \in D$ and $R>0$.
\end{lemma}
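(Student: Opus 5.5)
The plan is to mirror the proof of Lemma~\ref{lem:spectral:1} and combine a dyadic decomposition in the eigenvalues with the on-diagonal heat kernel bound. The key input is the estimate already derived there,
\[
	\sum_{n:\lambda_n\le S}|f_n(x)|^2\lesssim S^{d/2},
\]
which holds uniformly in $x\in D$ and $S>0$. It follows from $\sum_{n:\lambda_n\le S}|f_n(x)|^2\le e\,P_{1/S}(x,x)$ together with the upper bound in Lemma~\ref{lem:P}, since $P_{1/S}(x,x)\lesssim (1\wedge S^{-d/2})^{-1}\e^{-\lambda_1/S}\lesssim S^{d/2}$ when $S$ is large. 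When $S$ is small the left side is zero unless $S\ge\lambda_1$, so the bound $S^{d/2}$ is in fact uniform over all $S>0$. In particular, each dyadic block satisfies $\sum_{n:2^k<\lambda_n\le 2^{k+1}}|f_n(x)|^2\lesssim 2^{(k+1)d/2}\lesssim 2^{kd/2}$.

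First I reduce to the case $R\ge\lambda_1$. If $R<\lambda_1$, the sum runs over all $n$ and equals $\sum_n\lambda_n^{-a}|f_n(x)|^2$, which is the case $R=\lambda_1$ of what follows. That case gives a bound $C\lambda_1^{d/2-a}\lesssim C R^{d/2-a}$, because $d/2-a<0$ and $R<\lambda_1$.

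For $R\ge\lambda_1$, I choose $k_1\in\mathbb Z$ with $2^{k_1}\le R<2^{k_1+1}$. The sum is then dominated by the sum over dyadic blocks $2^k<\lambda_n\le 2^{k+1}$ with $k\ge k_1$, since every $n$ with $\lambda_n\ge R$ lies in one of them. On the $k$-th block, $\lambda_n^{-a}\le 2^{-ak}$ because $a>d/2>0$, so
\[
	\sum_{n:\lambda_n\ge R}\lambda_n^{-a}|f_n(x)|^2\lesssim\sum_{k\ge k_1}2^{-ak}2^{kd/2}=\sum_{k\ge k_1}2^{(d/2-a)k}\lesssim 2^{(d/2-a)k_1}\lesssim R^{d/2-a}.
\]
The geometric series converges precisely because $a>d/2$. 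The last step uses $2^{k_1}\asymp R$. All constants depend only on $a$, $d$ and $D$, which gives uniformity in $x$ and $R$.

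There is no real obstacle here. The only delicate point is making sure the block estimate holds uniformly for all dyadic scales, including those near $\lambda_1$. That is handled by the observation in the first paragraph that the count bound $S^{d/2}$ is valid for every $S>0$.
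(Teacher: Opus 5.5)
Your proposal is correct and follows essentially the same route as the paper: a dyadic decomposition from $2^{k_1}\le R<2^{k_1+1}$, the block bound $\sum_{2^k<\lambda_n\le 2^{k+1}}|f_n(x)|^2\lesssim 2^{kd/2}$ coming from the heat-kernel argument in Lemma~\ref{lem:spectral:1}, and a geometric series that converges because $a>d/2$. Your explicit handling of $R<\lambda_1$ and of the block bound's uniformity at small scales only spells out what the paper leaves implicit. One cosmetic point, shared with the paper: if $R=2^{k_1}$ is itself an eigenvalue, that $\lambda_n$ falls in block $k_1-1$, so start the sum at $k_1-1$, which changes only the constant.
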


\begin{proof}
Let $k_1 \in \mathbb{Z}$ be such that $2^{k_1} \le R < 2^{k_1+1}$. Then by Lemma \ref{lem:spectral:1},
\begin{align*}
	\sum_{n \ge 1: \lambda_n \ge R} \lambda_n^{-a} |f_n(x)|^2 
	&\le \sum_{k= k_1}^\infty 2^{-a k}\sum_{n\ge 1: 2^{k}< \lambda_n \le 2^{k+1}} |f_n(x)|^2\\
	&\lesssim \sum_{k=k_1}^\infty 2^{(d/2-a)k}
	\lesssim 2^{(d/2-a)k_1}
	\lesssim R^{d/2-a}.
\end{align*}
This completes the proof.
\end{proof}

\begin{lemma}\label{lem:spectral:3}
Suppose \(D\subset\R^d\) is a bounded \(C^2\) domain.
Then, for any $a<d/2+1$, there exists $C>0$ such that
\[
    \sum_{n \ge 1: \lambda_n\le R}\lambda_n^{-a}|\nabla f_n(x)|^2
    \le CR^{d/2+1-a}
\]
uniformly for all $x \in D$ and $R > 0$.
\end{lemma}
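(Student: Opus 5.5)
The plan mirrors the proof of Lemma \ref{lem:spectral:1}: first prove the case $a=0$, namely
\[
\sum_{n:\lambda_n\le R}|\nabla f_n(x)|^2\lesssim R^{d/2+1}
\]
uniformly in $x\in D$ and $R>0$, and then recover general $a<d/2+1$ by a dyadic decomposition.

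For the base case, use $e^{-\lambda_n/R}\ge e^{-1}$ when $\lambda_n\le R$ to get
\[
\sum_{n:\lambda_n\le R}|\nabla f_n(x)|^2\le e\sum_{n=1}^\infty e^{-\lambda_n/R}|\nabla f_n(x)|^2 .
\]
The full sum should be written as a heat-kernel quantity. By the semigroup property \eqref{semigroup} and \eqref{P},
\[
\sum_n e^{-\lambda_n t}|\nabla f_n(x)|^2=\int_D |\nabla_x P_{t/2}(x,y)|^2\,\d y ,
\]
since $\nabla_x P_{t/2}(x,\cdot)=\sum_n e^{-\lambda_n t/2}\nabla f_n(x) f_n(\cdot)$ in $L^2(D)$ and Parseval applies. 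To justify differentiating the series termwise, use that $f_n\in C^\infty(D)$ and that interior gradient bounds give at most polynomial growth in $\lambda_n$ locally uniformly, so the series converges locally uniformly. Alternatively, test against a fixed $x$ and pass to the limit via dominated convergence, using Lemma \ref{lem:grad:P}. Lemma \ref{lem:grad:P} with $t$ replaced by $t/2$ then gives
\[
\int_D|\nabla_xP_{t/2}(x,y)|^2\,\d y\lesssim t^{-(d+1)}\int_{\R^d}e^{-2|x-y|^2/(Ct)}\,\d y\lesssim t^{-d/2-1}
\]
for $t\le 1$. For $t>1$ the same bound holds up to the factor $e^{-\lambda_1 t/2}$ multiplied by an integral over the bounded set $D$, hence it is $O(1)$. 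Taking $t=1/R$ yields the bound $\lesssim R^{d/2+1}$ for $R\ge1$. For $R<\lambda_1$ the sum is empty. For $\lambda_1\le R<1$ it is bounded by the $R=1$ case, which is $\lesssim 1\lesssim R^{d/2+1}$ because $R\ge\lambda_1$ is bounded below.

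For the general case, let $k_0$ satisfy $2^{k_0}<\lambda_1\le 2^{k_0+1}$ and decompose into dyadic shells:
\[
\sum_{\lambda_n\le R}\lambda_n^{-a}|\nabla f_n(x)|^2\le \sum_{k\ge k_0,\,2^k\le R}\max\bigl(2^{-ak},2^{-a(k+1)}\bigr)\sum_{2^k<\lambda_n\le 2^{k+1}}|\nabla f_n(x)|^2\lesssim\sum_{k_0\le k\le \log_2R}2^{(d/2+1-a)k}.
\]
Since $a<d/2+1$, this geometric series is dominated by its top term, giving $\lesssim R^{d/2+1-a}$. This holds uniformly for $R\ge\lambda_1$, and the sum is empty for $R<\lambda_1$.

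The only delicate point is the identity expressing $\sum_n e^{-\lambda_n t}|\nabla f_n(x)|^2$ as $\|\nabla_xP_{t/2}(x,\cdot)\|_{L^2}^2$, that is, interchanging the gradient with the eigenfunction series. I expect this to be the main obstacle, and I would handle it by smoothness of $P_t$ in $x$ for $t>0$ together with $L^2$ convergence of the expansion of $\nabla_xP_{t/2}(x,\cdot)$. Once that identity is in place, the $C^2$ gradient bound of Lemma \ref{lem:grad:P} does all the work, uniformly up to the boundary.
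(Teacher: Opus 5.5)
Your proposal is correct and follows the paper's proof: the paper also bounds $\sum_{\lambda_n\le R}|\nabla f_n(x)|^2$ by $\e^2\sum_n \e^{-2\lambda_n/R}|\nabla f_n(x)|^2=\e^2\int_D|\nabla_xP_{1/R}(x,y)|^2\,\d y\lesssim R^{d/2+1}$ using Lemma~\ref{lem:grad:P}, and then runs the same dyadic decomposition as in Lemma~\ref{lem:spectral:1}. Your additional steps fill in details the paper leaves implicit: you justify the Parseval identity by differentiating under the integral, you treat $R<1$ separately, and you use the factor $\max(2^{-ak},2^{-a(k+1)})$ to cover $a<0$.
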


\begin{proof}
By \eqref{P} and Lemma \ref{lem:grad:P}, for any $R > 0$,
\[
\begin{aligned}
    \sum_{n \ge 1}\e^{-2\lambda_n/R}|\nabla f_n(x)|^2
    &=
    \int_D|\nabla_x P_{R^{-1}}(x,y)|^2\,\d y        \\
    &\lesssim
    R^{d+1}
    \int_D
    \exp\left(-\frac{2|x-y|^2}{C/R}\right)\,\d y    
    \lesssim R^{d/2+1}.
\end{aligned}
\]
Therefore
\[
    \sum_{n \ge 1: \lambda_n\le R}|\nabla f_n(x)|^2
    \le
    \e^2\sum_{n \ge 1} \e^{-2\lambda_n/R}|\nabla f_n(x)|^2
    \lesssim R^{d/2+1}.
\]
Let $k_0 \in \mathbb{Z}$ be such that $2^{k_0} < \lambda_1 \le 2^{k_0+1}$.
Then, for any $a<d/2+1$, applying the same dyadic decomposition as in the proof of Lemma \ref{lem:spectral:1} gives
\[
\begin{aligned}
    \sum_{n \ge 1: \lambda_n\le R}
    \lambda_n^{-a}|\nabla f_n(x)|^2
    &\le \sum_{k \ge k_0: 2^k \le R} 2^{-ak} \sum_{n \ge 1: 2^k < \lambda_n \le 2^{k+1}} |\nabla f_n(x)|^2\\
    & \lesssim \sum_{k \ge k_0: 2^k \le R} 2^{(d/2+1-a)k}  \lesssim R^{d/2+1-a}.
\end{aligned}
\]
This completes the proof of Lemma \ref{lem:spectral:3}.
\end{proof}

\subsection{Temporal regularity}


\begin{proposition}\label{prop:u-u:t}
Suppose $D \subset \R^d$ is a bounded Lipschitz domain and $\alpha>d/2-1$.
Then, for any $T>0$, there exists $C>0$ such that
\[
	\Var(u(t',x)-u(t,x)) \le C \rho_0^2(|t'-t|)
\]
uniformly for all $t,t' \in [0,T]$ and $x \in D$, where $\rho_0$ is given by \eqref{rho_0}, i.e.,
\[
	\rho_0^2(r) = \begin{cases}
	r^{\alpha-d/2+1} & \text{if $d/2-1<\alpha<d/2$,}\\
	r \log_+(1/r) & \text{if $\alpha=d/2$,}\\
	r & \text{if $\alpha>d/2$.}
	\end{cases}
\]
\end{proposition}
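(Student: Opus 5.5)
Fix $t<t'\le T$ and write $h=t'-t$. Using the mild formulation \eqref{eq:mild}, I split the increment into two independent parts:
\[
u(t',x)-u(t,x)=\int_t^{t'}\!\!\int_D P_{t'-r}(x,y)\,W_\alpha(\d r,\d y)+\int_0^t\!\!\int_D \big(P_{t'-r}(x,y)-P_{t-r}(x,y)\big)\,W_\alpha(\d r,\d y)=:J_1+J_2 .
\]
These integrate over disjoint time intervals, so $\Var(u(t',x)-u(t,x))=\Var(J_1)+\Var(J_2)$, and it suffices to bound each term by $C\rho_0^2(h)$.

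For $J_1$, shift time by $t$. This shows $J_1$ has the same variance as $u(h,x)$, namely $\frac1{\Gamma(\alpha)}\int_0^h\d s\int_0^\infty r^{\alpha-1}P_{2s+r}(x,x)\,\d r$. Lemma \ref{lem:var:u:1} then gives $\Var(J_1)\lesssim \rho_0^2(h)$ directly; for $h\le T$ the factor $1\wedge$ is harmless.

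For $J_2$ I would work with the spectral form from Proposition \ref{prop:spec:rep}, or equivalently expand $P$ via \eqref{P} in \eqref{W:corr:WfWg}. This gives
\[
\Var(J_2)=\sum_{n\ge1}\lambda_n^{-\alpha}|f_n(x)|^2\big(1-e^{-\lambda_n h}\big)^2\int_0^t e^{-2\lambda_n(t-r)}\,\d r\le \sum_{n\ge1}\lambda_n^{-\alpha-1}|f_n(x)|^2\big(1\wedge \lambda_n h\big)^2 .
\]
Now split the sum at $\lambda_n=1/h$.
\begin{itemize}
\item \emph{High frequencies} ($\lambda_n\ge 1/h$): the summand is at most $\lambda_n^{-\alpha-1}|f_n(x)|^2$. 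Since $\alpha+1>d/2$ by \eqref{Dalang}, Lemma \ref{lem:spectral:2} with $a=\alpha+1$ gives the bound $h^{\alpha+1-d/2}$, uniformly in $x$.
\item \emph{Low frequencies} ($\lambda_n\le 1/h$): the summand is $h^2\lambda_n^{1-\alpha}|f_n(x)|^2$. Lemma \ref{lem:spectral:1} with $a=\alpha-1$ gives $h^2\big(1+h^{\alpha-1-d/2}\big)$ when $\alpha<d/2+1$, $h^2\log_+(1/h)$ when $\alpha=d/2+1$, and $h^2$ when $\alpha>d/2+1$.
\end{itemize}

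Collecting the terms over the range $h\le T$:
\begin{itemize}
\item If $d/2-1<\alpha<d/2$, everything is $\lesssim h^{\alpha-d/2+1}$, because $h^2\le C h^{\theta}$ for $\theta<2$.
\item If $\alpha=d/2$, everything is $\lesssim h$.
\item If $\alpha>d/2$, the high-frequency part $h^{\alpha+1-d/2}$ and the low-frequency terms ($h^2$, $h^2\log$, or $h^{\alpha+1-d/2}$) are all $\lesssim h$ on bounded $h$.
\end{itemize}
In each case this is $\lesssim \rho_0^2(h)$. When $\alpha=d/2$ the dominant piece is actually $J_1$, which contributes $h\log_+(1/h)$.

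The only delicate point is making sure the exponents in Lemmas \ref{lem:spectral:1} and \ref{lem:spectral:2} line up with the Dalang condition, since these lemmas give bounds uniform in $x$ without any boundary weight. Beyond that check, the argument is routine bookkeeping of the cases.
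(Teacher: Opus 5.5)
Your proposal is correct and follows essentially the same route as the paper. The paper also splits the increment into the new-noise part $I_1=\Var(u(t'-t,x))$, handled by Lemma \ref{lem:var:u:1}, and the old-noise part $I_2$, which it expands spectrally, bounds by $\sum_n\lambda_n^{-\alpha-1}(1\wedge\lambda_n^2|t'-t|^2)|f_n(x)|^2$, and splits at $\lambda_n=1/|t'-t|$ using Lemma \ref{lem:spectral:1} with $a=\alpha-1$ and Lemma \ref{lem:spectral:2} with $a=\alpha+1$, exactly as you do.
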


\begin{proof}
For $0\le t < t' \le T$ and $x \in D$, we may use \eqref{W:corr:WfWg} and the semigroup property \eqref{semigroup} to write $\Var(u(t',x)-u(t,x))\lesssim I_1+I_2$, where
\begin{align*}
	& I_1:=\int_t^{t'}\d s\int_0^\infty\d r\,
    r^{\alpha-1}P_{2(t'-s)+r}(x,x) = \int_0^{t'-t} \d s \int_0^\infty \d r \, r^{\alpha-1} P_{2s+r}(x,x),\\
	&I_2:=\int_0^t \d s \int_0^\infty \d r\, r^{\alpha-1} \big[P_{2s+2(t'-t)+r}(x,x)-2P_{2s+(t'-t)+r}(x,x) 
        +P_{2s+r}(x,x) \big].
\end{align*}
By Lemma \ref{lem:var:u:1},
\begin{align}\label{var:u-u:t:I1}
	I_1 = \Var(u(t'-t, x)) \lesssim 
	\begin{cases}
	|t'-t|^{\alpha-d/2+1} & \text{if $d/2-1<\alpha<d/2$,}\\
	|t'-t| \log_+(1/|t'-t|) & \text{if $\alpha=d/2$,}\\
	|t'-t| & \text{if $\alpha>d/2$.}
	\end{cases}
\end{align}
Next, we may use \eqref{P} and Fubini's theorem to write
\begin{align*}
	I_2 &= \int_0^t \d s \int_0^\infty \d r \, r^{\alpha-1} \sum_{n=1}^\infty e^{-\lambda_n(2s+r)} \left( 1 - e^{-\lambda_n(t'-t)} \right)^2 |f_n(x)|^2\\
	& = \sum_{n=1}^\infty \int_0^\infty \d r \, r^{\alpha-1} e^{-\lambda_n r} \left( \frac{1-e^{-2\lambda_n t}}{2\lambda_n} \right) \left( 1 - e^{-\lambda_n(t'-t)} \right)^2 |f_n(x)|^2.
\end{align*}
Then, the identity $\int_0^\infty r^{\alpha-1} e^{-\lambda r} \, \d r = \Gamma(\alpha) \lambda^{-\alpha}$ for $\alpha,\lambda>0$ and the inequality $1-e^{-z} \le 1 \wedge z$ for $z>0$ imply that
\begin{align*}
	I_2 &\lesssim \sum_{n=1}^\infty \lambda_n^{-\alpha-1} \left( 1 \wedge \lambda_n^2|t'-t|^2 \right) |f_n(x)|^2\\
	&\lesssim |t'-t|^2 \sum_{n \ge 1: \lambda_n \le 1/|t'-t|} \lambda_n^{-\alpha+1} |f_n(x)|^2 + \sum_{n \ge 1: \lambda_n > 1/|t'-t|} \lambda_n^{-\alpha-1} |f_n(x)|^2\\
	& \lesssim \begin{cases}
	|t'-t|^2 |t'-t|^{-d/2+\alpha-1} + |t'-t|^{-d/2+\alpha+1} & \text{if $d/2-1<\alpha<d/2+1$,}\\
	|t'-t|^2 \log_+(1/|t'-t|) + |t'-t|^{-d/2+\alpha+1} & \text{if $\alpha=d/2+1$,}\\
	|t'-t|^2  + |t'-t|^{-d/2+\alpha+1} & \text{if $\alpha>d/2+1$.}
	\end{cases}
\end{align*}
It follows that
\begin{align}\label{var:u-u:t:I2}
	I_2 \lesssim 
	\begin{cases}
	|t'-t|^{\alpha-d/2+1} & \text{if $d/2-1<\alpha<d/2+1$,}\\
	|t'-t|^2 \log_+(1/|t'-t|) & \text{if $\alpha=d/2+1$,}\\
	|t'-t|^2 & \text{if $\alpha>d/2+1$.}
	\end{cases}
\end{align}
Combining \eqref{var:u-u:t:I1} and \eqref{var:u-u:t:I2} yields the desired estimate.
\end{proof}

\subsection{Spatial regularity}

In order to establish spatial regularity, we will 
need the following lemma, whose proof will be given after the proof of Proposition \ref{prop:u-u:x}.

\begin{lemma}\label{lem:curve}
If $D\subset \R^d$ is a bounded $C^k$ domain for some $k \in \N_+$, then there exist $\varepsilon_0 \in (0,1)$ and $C_0>0$ such that the following property holds: For every pair $x,y \in D$ with $|x-y|\le \varepsilon_0$, there is a $C^k$ curve $\gamma:[0,1] \to D$ such that $\gamma(0)=x$, $\gamma(1)=y$, and
\[
	|\gamma'(\tau)| \le C_0|x-y| \qquad \forall \tau \in [0,1].
\]
\end{lemma}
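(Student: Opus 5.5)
The plan is to build the curve from local boundary charts together with a compactness argument. Two regimes will be treated separately: pairs far from the boundary compared with $|x-y|$, where the straight segment already works, and pairs close to the boundary, where we straighten the boundary, join the two points inside the flattened picture, and map back.

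\emph{Step 1 (local charts).} Since $\partial D$ is compact and $C^k$, there are finitely many points $p_1,\dots,p_m\in\partial D$, radii $r_j>0$, and $C^k$ diffeomorphisms $\Phi_j$ defined on $B(p_j,2r_j)$. Each $\Phi_j$ flattens the boundary, so that $\Phi_j(D\cap B(p_j,2r_j))\subset\{z_d>0\}$ and $\Phi_j(\partial D\cap B(p_j,2r_j))\subset\{z_d=0\}$. Concretely, after rotating, $\Phi_j(z',z_d)=(z',z_d-g_j(z'))$, where $g_j$ is the $C^k$ graph function. We may also assume that $\Phi_j$ and $\Phi_j^{-1}$ have derivatives bounded by a common constant $L$, and that the balls $B(p_j,r_j)$ cover $\partial D$. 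Let $\delta>0$ be a Lebesgue number for this cover, in the sense that every point within distance $\delta$ of $\partial D$ has its $\delta$-neighbourhood contained in some $B(p_j,2r_j)$. This is possible because $\{x: d(x)\le \delta\}$ is compact. Set $\varepsilon_0=\min(\delta,\min_j r_j)/(4L^2+4)$.

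\emph{Step 2 (the two cases).} Take $x,y\in D$ with $|x-y|\le\varepsilon_0$.

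\emph{Case (a):} $d(x)>2|x-y|$. Then every point of the segment $[x,y]$ is at distance greater than $|x-y|$ from $\partial D$. The linear curve $\gamma(\tau)=x+\tau(y-x)$ therefore lies in $D$, is $C^\infty$, and has $|\gamma'|=|x-y|$.

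\emph{Case (b):} $d(x)\le 2|x-y|\le 2\varepsilon_0<\delta$. Choose $j$ with $B(x,\delta)\subset B(p_j,2r_j)$; then $y$ also lies in this ball. In the flat coordinates, write $\xi=\Phi_j(x)$ and $\eta=\Phi_j(y)$, both in $\{z_d>0\}$, with $|\xi-\eta|\le L|x-y|$. The half-space is convex, so the segment $\sigma(\tau)=\xi+\tau(\eta-\xi)$ stays in $\{z_d>0\}$. We must also keep it inside $\Phi_j(B(p_j,2r_j))$. This holds because its length is at most $L\varepsilon_0$, which is small compared with the distance from $\xi$ to the boundary of the chart image; that distance is at least $\delta/L$ by the choice of $\delta$. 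Define $\gamma=\Phi_j^{-1}\circ\sigma$. It is $C^k$, lies in $D\cap B(p_j,2r_j)$, joins $x$ to $y$, and satisfies $|\gamma'(\tau)|\le L|\eta-\xi|\le L^2|x-y|$.

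Taking $C_0=\max(1,L^2)$ finishes the proof.

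\emph{Main obstacle.} The delicate point is making the constants uniform. We need a single $\varepsilon_0$ such that, whenever $x$ is near the boundary, both $x$ and $y$ fall inside one chart, and the flattened segment does not leave that chart's image. This is exactly what the Lebesgue-number choice of $\delta$ together with the uniform bound $L$ on the derivatives of the finitely many charts provides. I would state it carefully, using $\operatorname{dist}\bigl(\Phi_j(x),\partial\Phi_j(B(p_j,2r_j))\bigr)\ge \delta/L$.
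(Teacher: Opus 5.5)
Your proposal is correct and follows essentially the paper's argument. Both cover $\partial D$ by finitely many $C^k$ graph charts, use a straight segment when $x$ is far from the boundary, and otherwise pull back a straight segment in the flattened coordinates $(z,v)\mapsto(z,v+\phi(z))$, with $C_0$ controlled by the chart derivatives and $\varepsilon_0$ small enough to keep the curve in one chart. The differences are cosmetic: you split by $d(x)>2|x-y|$ and keep the curve in the chart via a distance-to-chart-boundary bound, where the paper estimates $|\gamma(\tau)-q_i|\le C_0\varepsilon_0+\varepsilon_i/4$ directly. One imprecision: concluding $\gamma\subset D$ needs the full chart identity $D\cap B(p_j,2r_j)=B(p_j,2r_j)\cap\{z_d>g_j(z')\}$, not just the inclusion $\Phi_j(D\cap B(p_j,2r_j))\subset\{z_d>0\}$ you wrote (the standard definition of a $C^k$ domain supplies it).
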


Assuming Lemma \ref{lem:curve}, we can prove spatial regularity of the solution.

\begin{proposition}\label{prop:u-u:x}
Suppose $D \subset \R^d$ is a bounded $C^2$ domain and $\alpha>d/2-1$.
Then, for any $T>0$, there exists $C>0$ such that
\begin{align}\label{var:u-u:x}
	\Var(u(t,x')-u(t,x)) \le C \rho_1^2(|x'-x|),
\end{align}
uniformly for all $t \in [0,T]$ and $x,x'\in D$, where $\rho_1$ is given by
\[
	\rho_1^2(r) :=\begin{cases}
	r^{2\alpha-d+2} & \text{if $d/2-1<\alpha<d/2$,}\\
	r^2 \log_+(1/r) & \text{if $\alpha=d/2$,}\\
	r^2 & \text{if $\alpha>d/2$,}
	\end{cases}
\]
\end{proposition}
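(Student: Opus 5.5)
We use the series representation of Proposition \ref{prop:spec:rep}. By independence of the Brownian motions and the Wiener isometry,
\[
	\Var(u(t,x')-u(t,x)) = \sum_{n=1}^\infty \lambda_n^{-\alpha}\,\frac{1-e^{-2\lambda_n t}}{2\lambda_n}\,|f_n(x')-f_n(x)|^2 \le \sum_{n=1}^\infty \lambda_n^{-\alpha-1}|f_n(x')-f_n(x)|^2 .
\]
Put $h=|x'-x|$. We may assume $h\le\varepsilon_0$, with $\varepsilon_0$ from Lemma \ref{lem:curve}. For $h>\varepsilon_0$ the bound is immediate, since Lemma \ref{lem:var:u:1} gives $\Var(u(t,x')-u(t,x))\lesssim 1\lesssim \rho_1^2(h)$.

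Next we split the sum at the frequency threshold $R=h^{-2}$.

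\emph{High frequencies} ($\lambda_n>R$). Use $|f_n(x')-f_n(x)|^2\le 2|f_n(x')|^2+2|f_n(x)|^2$. When $\alpha<d/2$, the exponent $a=\alpha+1$ exceeds $d/2$, so Lemma \ref{lem:spectral:2} bounds this part by $R^{d/2-\alpha-1}=h^{2\alpha-d+2}$.

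When $\alpha\ge d/2$, move the threshold lower, to $R_0$ of order one. The tail over $\lambda_n\ge R_0$ then has $a=\alpha+1>d/2$, and Lemma \ref{lem:spectral:2} still applies. However, this crude bound gives no factor $h^2$. So for $\alpha\ge d/2$ I would instead keep the gradient bound over all frequencies up to $h^{-2}$, as in the next step, and for $\lambda_n>h^{-2}$ use the tail bound $h^{2\alpha-d+2}\le h^2$.

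\emph{Low frequencies} ($\lambda_n\le R$). Take the $C^2$ curve $\gamma$ from Lemma \ref{lem:curve}, joining $x$ to $x'$ inside $D$ with $|\gamma'|\le C_0h$. Since $f_n\in C^\infty(D)$,
\[
	f_n(x')-f_n(x)=\int_0^1 \nabla f_n(\gamma(\tau))\cdot\gamma'(\tau)\,\d\tau .
\]
By Cauchy--Schwarz in $\tau$,
\[
	|f_n(x')-f_n(x)|^2\le C_0^2h^2\int_0^1|\nabla f_n(\gamma(\tau))|^2\,\d\tau .
\]
Sum over $n$ with weights $\lambda_n^{-\alpha-1}$ and exchange the sum and the $\tau$-integral (Tonelli). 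Then apply Lemma \ref{lem:spectral:3} with $a=\alpha+1$ at each point $\gamma(\tau)$; the bound there is uniform in the point. This requires $\alpha+1<d/2+1$, i.e. $\alpha<d/2$, and yields
\[
	h^2 R^{d/2-\alpha}=h^2\,h^{2\alpha-d}=h^{2\alpha-d+2}.
\]
Combined with the high-frequency part, this proves the case $d/2-1<\alpha<d/2$.

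For $\alpha\ge d/2$, Lemma \ref{lem:spectral:3} as stated does not cover $a\ge d/2+1$. I would redo its dyadic decomposition: the block $2^k<\lambda_n\le 2^{k+1}$ contributes $\lesssim 2^{(d/2+1)k}\cdot 2^{-ak}$. With $a=\alpha+1$, summing over $2^k\le R$ gives $\log_+R$ when $\alpha=d/2$ and $O(1)$ when $\alpha>d/2$. The low-frequency part is therefore $\lesssim h^2\log_+(1/h)$ or $\lesssim h^2$, respectively. The high-frequency tail is $h^{2\alpha-d+2}\le h^2$, which is harmless for $h\le 1$.

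The main obstacle is the low-frequency step. Eigenfunction gradients blow up near $\partial D$, so the segment from $x$ to $x'$ cannot be used naively; indeed it may even leave $D$. This is handled by two ingredients already available. First, the gradient sum bound of Lemma \ref{lem:spectral:3} is uniform over all of $D$, which is where the $C^2$ assumption enters through Lemma \ref{lem:grad:P}. Second, Lemma \ref{lem:curve} provides a curve inside $D$ whose length is comparable to $h$. All remaining steps are routine bookkeeping.
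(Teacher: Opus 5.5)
Your proof is correct, but it takes a different route from the paper's.

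\textbf{The paper's route.} The paper works in physical space. It writes the variance as $\int_0^t \d s\int_0^\infty \d r\, r^{\alpha-1}\int_D |P_{s+r/2}(x',y)-P_{s+r/2}(x,y)|^2\,\d y$. It then bounds the inner $L^2$-norm by the minimum of two estimates:
\begin{itemize}
\item the Gaussian estimate $(2s+r)^{-d/2}$;
\item the gradient estimate $|x'-x|^2(2s+r)^{-d/2-1}$, obtained from Lemma \ref{lem:grad:P} and the curve of Lemma \ref{lem:curve}.
\end{itemize}
A single elementary integral $\int_0^\infty r^{\alpha-d/2}e^{-\lambda_1 r/2}\bigl(1\wedge |x'-x|^2/r\bigr)\,\d r$ then produces all three regimes of $\alpha$ at once.

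\textbf{Your route.} You work with the spectral sum $\sum_n \lambda_n^{-\alpha-1}|f_n(x')-f_n(x)|^2$ and cut at frequency $h^{-2}$. This is the exact dual of the paper's split at time scale $2s+r\approx h^2$. The gradient estimate enters only through Lemma \ref{lem:spectral:3}. The high-frequency tail is handled by Lemma \ref{lem:spectral:2}, which applies for every $\alpha$ in the Dalang range because $\alpha+1>d/2$. This is the same technique the paper itself uses later, for $I_2$ in Proposition \ref{prop:u-u:t} and for $A_2$ in Proposition \ref{prop:LX}. Your argument therefore makes the spatial estimate uniform with those proofs and avoids $s,r$ integrals altogether.

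\textbf{What each route costs.} Lemma \ref{lem:spectral:3} is only stated for $a<d/2+1$. So for $\alpha\ge d/2$ you have to redo its dyadic sum. You do this correctly: the block bound $2^{(d/2-\alpha)k}$ sums to $\log_+ h^{-2}$ when $\alpha=d/2$ and to $O(1)$ when $\alpha>d/2$, and the tail $h^{2\alpha-d+2}\le h^2$ is harmless. The heat-kernel route needs no case distinction until the final integral.

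\textbf{A minor inaccuracy in your closing remark.} For a bounded $C^2$ domain the individual eigenfunction gradients do not blow up at $\partial D$; they are bounded up to the boundary. The genuine issues are the two you also identify. First, the straight segment may leave $D$, which Lemma \ref{lem:curve} fixes. Second, you need a bound on $\sum_{\lambda_n\le R}\lambda_n^{-a}|\nabla f_n|^2$ that is uniform over $D$. Lemma \ref{lem:spectral:3} supplies that bound, and this is where the $C^2$ hypothesis is used.
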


\begin{proof}
Let $\varepsilon_0 \in (0,1)$ and $C_0>0$ be the constants given by Lemma \ref{lem:curve}.
According to Lemma \ref{lem:var:u:1}, $\Var(u(t,x))$ is uniformly bounded, so it suffices to prove \eqref{var:u-u:x} for $|x'-x| \le \varepsilon_0$.
For any $t \in [0,T]$ and $x,x' \in D$ with $|x'-x|\le \varepsilon_0$, we may use \eqref{W:corr:WfWg}, the semigroup property \eqref{semigroup} and \eqref{P} to see that
\begin{align*}
	&\Var(u(t,x')-u(t,x))\\
	&\propto \int_0^t \d s \int_0^\infty \d r \, r^{\alpha-1} \left[ P_{2s+r}(x',x') - 2P_{2s+r}(x',x)+ P_{2s+r}(x,x) \right]\\
	&=\int_0^t \d s \int_0^\infty \d r \, r^{\alpha-1} \sum_{n=1}^\infty e^{-\lambda_n (2s+r)} |f_n(x')-f_n(x)|^2\\
	&=\int_0^t \d s \int_0^\infty \d r \, r^{\alpha-1} \int_D \d y\, |P_{s+r/2}(x',y)-P_{s+r/2}(x,y)|^2.
\end{align*}
On the one hand, we may use $(a+b)^2 \le 2(a^2+b^2)$ for $a,b \ge 0$ and Lemma \ref{lem:P} to deduce that
\begin{align*}
	&\int_D |P_{s+r/2}(x',y)-P_{s+r/2}(x,y)|^2 \, \d y\\
	& \lesssim \int_D \frac{e^{-2\lambda_1 (s+r/2)}}{1 \wedge (s+r/2)^d} \, e^{- \frac{2|x-y|^2}{C_2(s+r/2)}} \, \d y + \int_D \frac{e^{-2\lambda_1 (s+r/2)}}{1\wedge (s+r/2)^d} e^{- \frac{2|x'-y|^2}{C_2(s+r/2)}}  \, \d y\\
	& \lesssim \int_{\R^d} \frac{e^{-\lambda_1 (s+r/2)}}{(s+r/2)^d} \, e^{- \frac{2|x-y|^2}{C_2(s+r/2)}} \, \d y + \int_{\R^d} \frac{e^{-\lambda_1 (s+r/2)}}{ (s+r/2)^d} e^{- \frac{2|x'-y|^2}{C_2(s+r/2)}}  \, \d y
	\lesssim \frac{e^{-\frac12\lambda_1(2s+r)}}{(2s+r)^{d/2}}
\end{align*}
uniformly for all $s \in (0,T]$, $r > 0$ and $x,x'\in D$.
On the other hand, by Lemma \ref{lem:curve}, for any $x,x'\in D$, we can find a $C^2$ curve $\gamma:[0,1] \to D$ going from $x$ to $x'$ such that $|\gamma'(\tau)| \le C_0|x'-x|$ for any $\tau \in [0,1]$.
Then, by the fundamental theorem of calculus, Cauchy-Schwarz inequality and Lemma \ref{lem:grad:P}, there exists $C>0$ such that
\begin{align*}
	&\int_D |P_{s+r/2}(x',y)-P_{s+r/2}(x,y)|^2 \, \d y
	= \int_D\left| \int_0^1 \nabla_x P_{s+r/2}(\gamma(\tau),y) \cdot \gamma'(\tau) \, \d \tau \right|^2 \,  \d y \\
	& \lesssim \int_D \d y \left( \int_0^1 \d \tau\, |\gamma'(\tau)|^2 \right) \left( \int_0^1 \d \tau  \, |\nabla_x P_{s+r/2}(\gamma(\tau),y)|^2 \right)\\
	& \lesssim |x'-x|^2 \int_0^1 \d \tau \int_{\R^d} \d y \, \frac{e^{-\lambda_1(s+r/2)}}{(s+r/2)^{d+1}} e^{- \frac{2|\gamma(\tau)-y|^2}{C(s+r/2)}}
	\lesssim |x'-x|^2 \frac{e^{-\frac12 \lambda_1 (2s+r)}}{(2s+r)^{d/2+1}}
\end{align*}
uniformly for all $s \in (0,T]$, $r>0$ and $x,x' \in D$ with $|x'-x| \le \varepsilon_0$.
Combining these two bounds, we have
\begin{align*}
	\int_D |P_{s+r/2}(x',y)-P_{s+r/2}(x,y)|^2 \, \d y
	\lesssim
	 \frac{e^{-\frac12 \lambda_1 (2s+r)}}{(2s+r)^{d/2}} \left( 1 \wedge \frac{|x'-x|^2}{2s+r} \right),
\end{align*}
uniformly for all $s \in (0,T]$, $r>0$ and $x,x' \in D$ with $|x'-x| \le \varepsilon_0$. 
It follows that
\begin{align*}
	\Var(u(t,x')-u(t,x))
	& \lesssim \int_0^t \d s \int_0^\infty \d r \,\frac{r^{\alpha-1} e^{-\frac12\lambda_1(2s+r)}}{(2s+r)^{d/2}} \left( 1 \wedge \frac{|x'-x|^2}{2s+r} \right)\\
	&=\int_0^t \d s \int_{2s}^\infty \d r \,\frac{(r-2s)^{\alpha-1} e^{-\frac12\lambda_1r}}{r^{d/2}} \left( 1 \wedge \frac{|x'-x|^2}{r} \right)\\
	&=\int_0^\infty \d r \left[\int_0^{t \wedge (r/2)} (r-2s)^{\alpha-1} \, \d s \right] \frac{e^{-\frac12 \lambda_1r}}{r^{d/2}} \left( 1 \wedge \frac{|x'-x|^2}{r}\right)\\
	& \lesssim \int_0^\infty \d r \, r^{\alpha-d/2} e^{-\frac12 \lambda_1 r} \left( 1 \wedge \frac{|x'-x|^2}{r}\right).
\end{align*}
Next, we split the integral and estimate as follows:
\begin{align*}
	\Var(u(t,x')-u(t,x))
	& \lesssim \int_0^{|x'-x|^2} r^{\alpha-d/2} \, \d r + |x'-x|^2 \int_{|x'-x|^2}^1 r^{\alpha-d/2-1} \, \d r\\
	& \qquad  + |x'-x|^2 \int_1^\infty r^{\alpha-d/2-1} e^{-\frac12 \lambda_1 r} \, \d r\\
	& \lesssim \begin{cases}
	|x'-x|^{2\alpha-d+2} & \text{if $d/2-1<\alpha<d/2$,}\\
	|x'-x|^2 \log_+(1/|x'-x|) & \text{if $\alpha=d/2$,}\\
	|x'-x|^2 & \text{if $\alpha>d/2$.}
	\end{cases}
\end{align*}
This completes the proof of Proposition \ref{prop:u-u:x}, assuming Lemma \ref{lem:curve}.
\end{proof}

\begin{proof}[Proof of Lemma \ref{lem:curve}]
Since $D$ is a $C^k$ domain, for every $q \in \partial D$, we can find a radius $\varepsilon \in (0,1)$, a $C^k$ function $\phi:\R^{d-1} \to \R$, and an orthonormal coordinate system $(z,v)_q$ for $\R^d$, with $z \in \R^{d-1}, v \in \R$ and with origin $(0,0)_q=q$ such that
\begin{align*}
	&D \cap B(q,\varepsilon) = B(q,\varepsilon) \cap \{ (z,v+\phi(z))_q : z \in \R^{d-1}, v>0 \},\\
	&\partial D \cap B(q,\varepsilon) = B(q,\varepsilon) \cap \{ (z, \phi(z))_q : z \in \R^{d-1} \}.
\end{align*}
Since $D$ is bounded, its boundary $\partial D$ is compact, so we can find $q_1,\dots,q_n \in \partial D$ such that $\partial D \subset \bigcup_{i=1}^n B(q_i, \varepsilon_i/12)$, where $\varepsilon_1,\dots,\varepsilon_n \in (0,1)$ are the corresponding radii, $\phi_1,\dots, \phi_n :\R^{d-1} \to \R$ are the corresponding $C^1$ functions, and $(z,v)_i$ are the corresponding coordinate systems.

Since each $\phi_i$ is $C^k$ for some $k \ge 1$,
\begin{align}\label{curve:L}
	L_i := \sup_{|z| \le \varepsilon_i} |\nabla \phi_i(z)| < \infty.
\end{align}
We may take 
\begin{align}\label{curve:eps_0}
	C_0 := 2\left[1+\max_{1 \le i \le n}L_i^2\right]
	\quad \text{and}\quad
	\varepsilon_0 := \min_{1\le i\le n} \left[ \frac{\varepsilon_i}{12} \wedge \frac{\varepsilon_i}{4C_0} \right].
\end{align}
Consider any pair $x, y \in D$ with $|x-y| \le \varepsilon_0$.
Since $\overline{B}(x,\varepsilon_0) \cap \bigcup_{i=1}^n B(q_i, \varepsilon_i/12)$ is either empty or nonempty, and since $\varepsilon_0 \le \varepsilon_i/12$, either one of the following holds:
\begin{enumerate}
\item $\overline{B}(x, \varepsilon_0) \subset D$, or
\item $\overline{B}(x, \varepsilon_0) \subset B(q_i, \varepsilon_i/4)$ for some $i \in \{1,\dots, n\}$.
\end{enumerate}

In case (1), we may take $\gamma$ as the straight line from $x$ to $y$, and get $|\gamma'(\tau)| \le |x-y|$ for all $\tau \in [0,1]$.
In case (2), we may write 
\[
	x = F(z_1,v_1) \quad \text{and} \quad 
	y = F(z_2,v_2)
\]
for some $z_1, z_2 \in \R^{d-1}$ and $v_1,v_2>0$, where $F: \R^{d-1} \times \R \to \R^d$ is given by
\[
	F(z,v) = (z, v+ \phi_i(z))_i.
\]
Define $\gamma: [0,1] \to \R^d$ by
\[
	\gamma(\tau) = (\tau z_2+(1-\tau)z_1 \ ,\  \tau v_2+(1-\tau)v_1 + \phi_i(\tau z_2+(1-\tau)z_1))_i.
\]
Then, $\gamma$ is a $C^k$ curve such that $\gamma(0) = x$, $\gamma(1)=y$, and
\begin{align*}
	\gamma'(\tau) &= ( z_2-z_1\ , \ v_2-v_1 + \nabla \phi_i(\tau z_2+(1-\tau)z_1) \cdot (z_2-z_1) )_i.
\end{align*}
By \eqref{curve:L}, we have
\begin{align}\begin{split}\label{gamma':bd}
	|\gamma'(\tau)| &\le \sqrt{ |z_1-z_2|^2 + 2|v_1-v_2|^2+2L_i^2 |z_1-z_2|^2}\\
	& \le \sqrt{2(1+L_i^2)}\, |(z_1,v_1)-(z_2,v_2)|
\end{split}\end{align}
for all $\tau \in [0,1]$.
Note that $F$ is invertible with $F^{-1}(a,b) = (a,b-\phi_i(a))$ and
\begin{align*}
	|F^{-1}(a_1,b_1) - F^{-1}(a_2,b_2)|
	&\le \sqrt{2(1+L_i^2)}\, |(a_1,b_1)-(a_2,b_2)|
\end{align*}
for all $(a_1,b_1), (a_2,b_2) \in \R^{d-1} \times \R$.
It follows that
\[
	|(z_1,v_1)-(z_2,v_2)| = |F^{-1}(x)-F^{-1}(y)| \le \sqrt{2(1+L_i^2)} \, |x-y|.
\]
The preceding together with \eqref{gamma':bd} and the choice of $C_0$ in \eqref{curve:eps_0} implies that
\begin{align}\label{gamma':bd2}
	|\gamma'(\tau)| \le 2(1+L_i^2) |x-y| \le C_0|x-y| \quad \text{for all $\tau \in [0,1]$.}
\end{align}
This yields the desired estimate.
It remains to verify that the curve $\gamma$ lies in $D$.
Indeed, \eqref{gamma':bd2}, $|x-y| \le \varepsilon_0$, and $\overline{B}(x,\varepsilon_0) \subset B(q_i,\varepsilon_i/4)$ imply that for every $\tau \in [0,1]$, 
\begin{align*}
	|\gamma(\tau)-q_i| &\le |\gamma(\tau)-x| + |x-q_i| = \left|\int_0^\tau \gamma'(r) \, \d r\right| + |x-q_i|\\
	&\le C_0 \varepsilon_0 + \frac{\varepsilon_i}{4}
	\le C_0\left(\frac{\varepsilon_i}{4C_0}\right) + \frac{\varepsilon_i}{4} = \frac{\varepsilon_i}{2},
\end{align*}
where the last inequality follows from the choice of $\varepsilon_0$ in \eqref{curve:eps_0}. This shows that $\gamma(\tau) \in \overline{B}(q_i,\varepsilon_i/2) \cap \{ (z,v+\phi_i(z))_i : z \in \R^{d-1}, v>0 \} \subset D \cap \overline{B}(q_i,\varepsilon_i/2)$ for all $\tau \in [0,1]$.
The proof of Lemma \ref{lem:curve} is thus complete.
\end{proof}

\subsection{Proof of Theorem \ref{thm:reg}}

\begin{proof}
Fix $T>0$.
Under \eqref{Dalang}, i.e., $\alpha>d/2-1$,
Propositions \ref{prop:u-u:t} and \ref{prop:u-u:x} imply that
\begin{align*}
	\|u(t,x)-u(t',x')\|_2
	&\le \|u(t,x)-u(t',x)\|_2 + \|u(t',x)-u(t',x')\|_2\\
	& \lesssim \tilde\rho((t,x),(t',x')),
\end{align*}
uniformly for all $(t,x),(t',x') \in [0,T]\times D$, where 
\[
	\tilde\rho((t,x),(t',x')) :=
	\begin{cases}
	|t-t'|^{\theta/2} + |x-x'|^\theta & \text{if $\frac{d}{2}-1 < \alpha < \frac{d}{2}$,}\smallskip\\
	\sqrt{|t-t'| \log_+\frac{1}{|t-t'|}} + |x-x'|\sqrt{\log_+\frac{1}{|x-x'|}} & \text{if $\alpha=d/2$,}\smallskip\\
	\sqrt{|t-t'|} + |x-x'| & \text{if $\alpha>d/2$,}
	\end{cases}
\]
and $\theta$ is given by \eqref{theta}.
Since $u$ is Gaussian, it follows that for any $k \ge 2$, there exists $C_{k,T} >0$ such that
\[
	\|u(t,x)-u(t',x')\|_k \le C_{k,T} \,\tilde\rho((t,x),(t',x')),
\]
uniformly for all $(t,x),(t',x') \in [0,T]\times D$.
Hence, by Kolmogorov's continuity theorem, $u$ is a.s.~locally H\"older continuous of order $\beta_0$ in $t$ and of order $\beta_1$ in $x$, for any $\beta_0 \in (0,(\theta\wedge 1)/2)$ and $\beta_1 \in (0,\theta\wedge 1)$.
\end{proof}

\section{Optimal regularity}\label{s:optimal}

\subsection{Strong local nondeterminism}


\begin{proposition}\label{prop:SLND}
Suppose \(D\subset\R^d\) is a bounded \(C^2\) domain and \eqref{alpha:Holder:range} holds. Then for every \(T>0\), there exists \(C>0\) such that
\[
    \Var\left(
        u(t,x)\mid u(t_1,x_1),\ldots,u(t_n,x_n)
    \right)
    \ge
    C\left[
        \min_{1\le j\le n}
        \rho^2((t,x),(t_j,x_j))
        \wedge t^{\theta}
        \wedge (d(x))^{2\theta}
    \right]
\]
uniformly for all \(n\ge1\) and all \((t,x),(t_1,x_1),\dots,(t_n,x_n)\in[0,T]\times D\).
\end{proposition}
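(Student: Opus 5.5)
The plan is to use the standard Fourier-free test-function argument, adapted to the spectral representation of Proposition \ref{prop:spec:rep}. Since $u$ is Gaussian, the conditional variance equals
\[
\inf_{a_1,\dots,a_n\in\R}\E\Big[\Big(u(t,x)-\sum_{j=1}^n a_j u(t_j,x_j)\Big)^2\Big].
\]
By \eqref{W:corr:WfWg} and \eqref{eq:mild}, this is $\|p_{t,x}-\sum_j a_j p_{t_j,x_j}\|^2_{\mathcal H_\alpha(D)}$. Set
\[
r:=\min_j\rho((t,x),(t_j,x_j))\wedge t^{\theta/2}\wedge d(x)^\theta
\]
and assume $r>0$. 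I will construct a test function $\Phi\in\mathcal H_\alpha(D)$ (in duality) such that $\langle p_{t_j,x_j},\Phi\rangle=0$ for all $j$, $\langle p_{t,x},\Phi\rangle\gtrsim 1$, and $\|\Phi\|_{*}\lesssim r^{-1}$. Here $\|\cdot\|_*$ is the dual norm, namely the $L^2(\R_+;\mathcal H^{-\alpha})$ norm, i.e.\ $\sum_n\lambda_n^{\alpha}|\hat\Phi_n(s)|^2$ integrated in $s$. Cauchy--Schwarz then gives
\[
1\lesssim\|p_{t,x}-\textstyle\sum a_jp_{t_j,x_j}\|\,\|\Phi\|_*,
\]
hence the lower bound $\gtrsim r^2$.

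Concretely, I work directly with the spectral form. Writing $u(t,x)=\sum_n\lambda_n^{-\alpha/2}f_n(x)\int_0^t e^{-\lambda_n(t-s)}dB_n(s)$, the conditional variance is
\[
\inf_a\sum_n\lambda_n^{-\alpha}\int_0^\infty\Big|f_n(x)e^{-\lambda_n(t-s)}\1_{s<t}-\sum_j a_jf_n(x_j)e^{-\lambda_n(t_j-s)}\1_{s<t_j}\Big|^2ds.
\]
I will choose a test function of product form $\Phi(s,y)=\delta^{-?}\psi((t-s)/\delta^{2})\,\phi_\delta(y)$. Here $\delta=r^{1/\theta}$ is the scaling length, $\psi$ is a smooth bump supported in $(0,1)$ in rescaled time, and $\phi_\delta$ is built from a smooth compactly supported function $\phi$ on $\R^d$ with Fourier transform supported away from zero. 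Taking $\phi_\delta(y)=\delta^{-d}\phi((x-y)/\delta)$, it is supported in $B(x,\delta)\subset D$, which holds as long as $\delta\lesssim d(x)$; this is why the $d(x)^{2\theta}$ term appears. The pairing $\langle p_{t_j,x_j},\Phi\rangle=\int\int P_{t_j-s}(x_j,y)\Phi(s,y)\,dy\,ds$ is the heat semigroup applied to $\Phi$ evaluated at $(t_j,x_j)$. To make it exactly zero I would instead define $\Phi$ via a ``space-time kernel'' trick: take $\Phi=(\partial_s+\Delta_y)^{*}$ applied to a bump $g(s,y)=\eta((t-s)/\delta^2,(x-y)/\delta)$, supported in the parabolic box of size $\delta$ around $(t,x)$ and contained in $(0,T)\times D$ since $\delta^2\lesssim t$. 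Then $\langle p_{t',x'},(\partial_s+\Delta)g\rangle=g(t',x')$ by Duhamel, since $p$ is the fundamental solution. This vanishes whenever $(t',x')$ lies outside the box, which holds for every $t_j,x_j$ with $\rho\ge r$ after adjusting constants, and it equals $\eta(0,0)=1$ at $(t,x)$.

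The main step is bounding $\|(\partial_s+\Delta)g\|_*^2=\int\|(\partial_s+\Delta)g(s,\cdot)\|^2_{\mathcal H^{-\alpha}}ds$ by $C\delta^{-2\theta}$. Since $\mathcal H^{-\alpha}$ is the domain of $(-\Delta)^{\alpha/2}$ with equivalent norm (for smooth compactly supported functions), the squared norm is controlled by $\|(-\Delta)^{\alpha/2}(\partial_s+\Delta)g\|_{L^2}^2$. Using interpolation and the scaling of $g$ in time and space, each $y$-derivative costs $\delta^{-1}$ and $\partial_s$ costs $\delta^{-2}$, with the space-time volume being $\delta^{d+2}$. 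This gives
\[
\|(\partial_s+\Delta)g\|_*^2\lesssim\delta^{-2\alpha-4}\delta^{d+2}=\delta^{-2\theta}=r^{-2},
\]
which is exactly what is needed. The obstacle I expect is justifying the comparison of the spectral $\mathcal H^{-\alpha}$ norm on $D$ with homogeneous Sobolev norms for functions supported in a small ball away from the boundary, uniformly in $\delta$, together with the non-integer power. For $0<\alpha$ I would control $\|(-\Delta_D)^{\alpha/2}h\|$ by interpolating between $\|h\|_{L^2}$ and $\|\Delta^k h\|_{L^2}$ for integer $k\ge\alpha/2$ (valid since $h\in C_c^\infty(D)$ lies in the domain of all powers of the Dirichlet Laplacian), which yields the correct $\delta$-scaling. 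Finally, the regime where $r$ is bounded below by a constant (large $\delta$) is handled by capping $\delta$ by a fixed small constant times $\min(1,\sqrt T)$, absorbing it into $C$.
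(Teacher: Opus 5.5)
Your proposal is correct and is essentially the paper's proof, written in physical time instead of after Plancherel in $\tau$: the paper's quantity $I$ is $2\pi$ times the $L^2$-pairing of $p_{t,x}-\sum_j a_j p_{t_j,x_j}$ with $(\partial_s-\Delta_y)g$ for $g(s,y)=\varphi_{r^2}(t-s)\psi_{x,r}(y)$, and $J$ is $2\pi$ times your dual norm $\|\Phi\|_*^2$, which the paper bounds by the same integer-power computation followed by H\"older interpolation in the spectral variable. There is one sign slip: $p_{t',x'}$ satisfies $(\partial_s+\Delta_y)p_{t',x'}=0$ for $s<t'$, so the operator to apply to $g$ is $\partial_s-\Delta_y=-(\partial_s+\Delta_y)^*$, and the displayed identity $\langle p_{t',x'},(\partial_s+\Delta)g\rangle=g(t',x')$ is false as written; this does not affect the scaling bound, and the final capping of $\delta$ is unnecessary because nothing in your bounds depends on $T$.
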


\begin{proof}
For a centered Gaussian vector $(X,Y_1,\dots, Y_n)$, the conditional variance $\Var(X\mid Y_1,\dots,Y_n)$ is the squared \(L^2(\Omega)\)-distance from $X$ to the linear span of the conditioned variables, i.e.,
\[
	\Var(X\mid Y_1,\dots,Y_n) =\inf_{a_1,\dots, a_n \in \R}\E\Bigg[\bigg(X-\sum_{j=1}^n a_jY_j\bigg)^2\Bigg].
\]
Thus it is enough to prove that there exists $C>0$ such that
\begin{align}\begin{split}\label{SLND:claim}
	&\E\Bigg[\bigg(u(t,x)-\sum_{j=1}^n a_j u(t_j,x_j)\bigg)^2\Bigg]\\
	& \ge C \left[ \min_{1\le j\le n}
        \rho^2((t,x),(t_j,x_j))
        \wedge t^{\theta}
        \wedge (d(x))^{2\theta}\right],
\end{split}\end{align}
uniformly for all $n \ge 1$, $(t,x),(t_1,x_1),\dots, (t_n,x_n) \in [0,T] \times D$, and $a_1,\dots, a_n \in \R$.
To this end, we first recall that
\[
	u(t,x)-\sum_{j=1}^n a_j u(t_j,x_j) = W_\alpha\bigg( p_{t,x} - \sum_{j=1}^n a_j p_{t_j,x_j}\bigg).
\]
By Theorem \ref{thm:exist}, the right-hand side above is a well-defined Wiener integral under condition \eqref{Dalang}.
Hence, we may use the first identities of \eqref{ip:alpha} and \eqref{W:corr:WfWg}, and \eqref{P} to deduce that 
\begin{align}\label{eq:Fourier}
&\E\Bigg[\bigg(u(t,x)-\sum_{j=1}^n a_j u(t_j,x_j)\bigg)^2\Bigg]\\
&= \sum_{k=1}^\infty \lambda_k^{-\alpha} \int_\R \bigg| e^{-\lambda_k (t-r)} f_k(x) {\bf 1}_{(0,t)}(r) - \sum_{j=1}^n a_j e^{-\lambda_k (t_j-r)} f_k(x_j) {\bf 1}_{(0,t_j)}(r) \bigg|^2 \d r \notag\\
&=
    \frac1{2\pi}
    \sum_{k=1}^\infty \lambda_k^{-\alpha}
    \int_\R
    \frac{\left|
        (e^{-i\tau t}-e^{-\lambda_k t})f_k(x)
        -\sum_{j=1}^n a_j
        (e^{-i\tau t_j}-e^{-\lambda_k t_j})f_k(x_j)
    \right|^2}
    {\lambda_k^2+\tau^2}\,\d\tau, \notag
\end{align}
where the last equality follows from Plancherel's theorem and the identity
\[
    \int_0^t \e^{-\lambda(t-r)}\e^{-i\tau r}\,\d r
    =
    \frac{\e^{-i\tau t}-\e^{-\lambda t}}{\lambda-i\tau}.
\]
Let \(R_D=\diam(D)\) and \(A_D=R_D\wedge1\wedge\sqrt T\). 
Define
\begin{align}\label{SLND:r}
    r
    :=
    A_D
    \left[
        \min_{1\le j\le n}
        \left(
            \sqrt{\frac{|t-t_j|}{T}}
            \vee
            \frac{|x-x_j|}{R_D}
        \right)
        \wedge
        \sqrt{\frac{t}{T}}
        \wedge
        \frac{d(x)}{R_D}
    \right].
\end{align}
Choose and fix two test functions
\[
    \varphi\in C_c^\infty((-1/2,1/2)),
    \qquad
    \psi\in C_c^\infty(B(0,1/2)),
\]
with \(\varphi(0)=\psi(0)=1\).
If \(r=0\), the desired lower bound in \eqref{SLND:claim} is trivial. So, we may assume that \(r>0\), and define
\[
    \varphi_{r^2}(u):=r^{-2}\varphi(u/r^2)
    \quad \text{and} \quad
    \psi_{x,r}(y):=r^{-d}\psi\left(\frac{y-x}{r}\right).
\]
Following \cite[Section~3]{HL26}, we define
\begin{align}\label{SLND:I}
    I&:=
    \sum_{k=1}^\infty \inner{\psi_{x,r}}{f_k}_{L^2(D)} \times\\
    & \quad \times
    \int_\R
    \left[
        (\e^{-i\tau t}-\e^{-\lambda_k t})f_k(x)
        -
        \sum_{j=1}^n a_j
        (\e^{-i\tau t_j}-\e^{-\lambda_k t_j})f_k(x_j)
    \right]
    \e^{i\tau t}\widehat\varphi_{r^2}(\tau)\,\d\tau,\notag
\end{align}
where we use the Fourier convention
\[
    \widehat g(\tau)=\int_\R\e^{-i\tau v}g(v)\,\d v,
    \qquad
    g(v)=\frac1{2\pi}\int_\R\e^{i\tau v}\widehat g(\tau)\,\d\tau.
\]
In particular, \(\widehat\varphi_{r^2}(\tau)=\widehat\varphi(r^2\tau)\), and Fourier inversion gives
\begin{align*}
    I
    =2\pi\sum_{k=1}^\infty
    \Bigg[&\bigl(\varphi_{r^2}(0)
        -\e^{-\lambda_k t}\varphi_{r^2}(t)\bigr)f_k(x)\\
    &-\sum_{j=1}^n a_j
       \bigl(\varphi_{r^2}(t-t_j)
        -\e^{-\lambda_k t_j}\varphi_{r^2}(t)\bigr)f_k(x_j)
    \Bigg]
    \inner{\psi_{x,r}}{f_k}_{L^2(D)}.
\end{align*}
Note that \(A_D\le\sqrt T\), thus we have \(r\le\sqrt t\). 
Because \(\operatorname{supp}\varphi\subset[-1/2,1/2]\), the inequality \(r\le\sqrt t\) implies $\varphi_{r^2}(t)=0$.
Hence,
\begin{align*}
	I
	=2\pi\sum_{k=1}^\infty
	\Bigg[\varphi_{r^2}(0) f_k(x)
	-\sum_{j=1}^n a_j\varphi_{r^2}(t-t_j)f_k(x_j)\Bigg]
	\inner{\psi_{x,r}}{f_k}_{L^2(D)}.
\end{align*}
Since \(\psi_{x,r}\in C_c^\infty(D)\), we may use integration by parts, $\sup_{x \in D}|f_n(x)|\lesssim \lambda_n^{d/4}$ (see \cite[Proposition 5]{F95}) and Weyl's law $\lambda_n \asymp n^{2/d}$ (see \cite[Corollary 3.15]{FLW}) to show that for any $z \in D$,
\begin{align*}
	&\sum_{n=1}^\infty \left|\langle \psi_{x,r} , f_n \rangle_{L^2(D)} f_n(z)\right|
	= \sum_{n=1}^\infty \lambda_n^{-d} \left|\langle \psi_{x,r} , (-\Delta)^d f_n \rangle_{L^2(D)}\right| |f_n(z)| \\
	&= \sum_{n=1}^\infty \lambda_n^{-d} \left| \langle(-\Delta)^d \psi_{x,r}, f_n  \rangle_{L^2(D)}\right| |f_n(z)|
	\lesssim \|(-\Delta)^d \psi_{x,r}\|_{L^2(D)} \sum_{n=1}^\infty n^{-3/2} < \infty.
\end{align*}
The preceding and the fact that each $f_n$ is continuous on $D$ (see \cite[Theorem 9.31]{Brezis}) then imply pointwise convergence of the following series:
\[
    \sum_{k=1}^\infty
    \inner{\psi_{x,r}}{f_k}_{L^2(D)}f_k(z)
    =\psi_{x,r}(z) \quad \text{for all $z\in D$.}
\]
It follows that
\begin{align}\label{SLND:I:eval}
    I=2\pi
    \Bigg[\varphi_{r^2}(0)\psi_{x,r}(x)-\sum_{j=1}^n a_j \varphi_{r^2}(t-t_j)\psi_{x,r}(x_j)\Bigg].
\end{align}
Next, we evaluate $I$ by checking the support of the test functions.
Recall \eqref{d(x)}.
Since \(A_D\le R_D\), we have \(r\le d(x)\), so $\psi_{x,r}(\cdot)$ is supported in \(B(x,r/2)\subset D\). 
Fix \(j\in\{1,\ldots,n\}\) and consider the product $\varphi_{r^2}(t-t_j)\psi_{x,r}(x_j)$.
If \(\varphi_{r^2}(t-t_j)=0\), then this product is 0. Otherwise \(|t-t_j|\le r^2/2\). Hence
\[
    \sqrt{\frac{|t-t_j|}{T}}\le \frac{r}{\sqrt{2T}}.
\]
By the definition of \(r\), the maximum
\[
    \sqrt{\frac{|t-t_j|}{T}}\vee\frac{|x-x_j|}{R_D}
\]
is at least \(r/A_D\).
But the first term $\sqrt{|t-t_j|/T}$ in the maximum is at most \(r/\sqrt{2T}<r/A_D\) because \(A_D\le\sqrt T\), so the maximum must be attained by the second term $|x-x_j|/R_D$. Hence,
\[
    r\le A_D\frac{|x-x_j|}{R_D}\le |x-x_j|.
\]
Since \(\operatorname{supp}\psi_{x,r}\subset B(x,r/2)\), this gives \(\psi_{x,r}(x_j)=0\). Therefore, in any case, we have
\begin{equation}\label{eq:vanish}
    \varphi_{r^2}(t-t_j)\psi_{x,r}(x_j)=0
    \quad \text{for each $j=1,\dots, n$.}
\end{equation}
It follows from \eqref{SLND:I:eval} and \eqref{eq:vanish} that
\begin{align}\label{SLND:I:value}
    I
    =2\pi\varphi_{r^2}(0)\psi_{x,r}(x)
    =2\pi r^{-d-2}.
\end{align}

Applying Cauchy-Schwarz directly to \eqref{SLND:I}, with weight
\(\lambda_k^{-\alpha}(\lambda_k^2+\tau^2)^{-1}\), and using
\eqref{eq:Fourier}, gives
\begin{align}\label{SLND:I:CS}
    |I|^2
    \le
    C\,\E\Bigg[\bigg(u(t,x)-\sum_{j=1}^n a_j u(t_j,x_j)\bigg)^2\Bigg]\times J,
\end{align}
where
\[
    J
    :=
    \sum_{k=1}^\infty
    \int_\R
    \lambda_k^\alpha(\lambda_k^2+\tau^2)
    \left|\widehat\varphi_{r^2}(\tau)\right|^2
    \left|\inner{\psi_{x,r}}{f_k}_{L^2(D)}\right|^2\,\d\tau .
\]
Since \(\widehat\varphi_{r^2}(\tau)=\widehat\varphi(r^2\tau)\) and $\widehat{\varphi}$ is rapidly
decreasing, we have
\[
    \int_\R \left|\widehat\varphi_{r^2}(\tau)\right|^2\,\d\tau\lesssim r^{-2},
    \qquad
    \int_\R\tau^2\left|\widehat\varphi_{r^2}(\tau)\right|^2\,\d\tau\lesssim r^{-6}.
\]
Consequently,
\[
	J \lesssim r^{-2} \sum_{k=1}^\infty \lambda_k^{\alpha+2} \left| \langle \psi_{x,r}, f_k \rangle_{L^2(D)} \right|^2 + r^{-6} \sum_{k=1}^\infty \lambda_k^{\alpha} \left| \langle \psi_{x,r}, f_k \rangle_{L^2(D)} \right|^2.
\]
Next, we show that for any $\beta\ge 0$, there exists $C=C_{\beta, \psi}>0$ such that
\begin{align}\label{SLND:claim:J}
	\sum_{k=1}^\infty \lambda_k^{\beta} \left| \langle \psi_{x,r}, f_k \rangle_{L^2(D)} \right|^2 \le C r^{-d-2\beta}.
\end{align}

Case 1: $\beta = m \in \N_0$.
In this case, we may use $-\Delta f_k = \lambda_k f_k$, integration by parts, and the definition of $\psi_{x,r}$ to deduce that
\begin{align*}
	&\langle \psi_{x,r} , \lambda_k^m f_k\rangle
	= \int_D \psi_{x,r}(y) ((-\Delta)^m f_k) (y) \, \d y
	=\int_D ((-\Delta)^m \psi_{x,r}) (y) f_k(y)  \, \d y\\
	& = (-1)^m r^{-2m} \int_D r^{-d}(\Delta^m\psi)(\tfrac{y-x}{r}) f_k(y) \, \d y = (-1)^m r^{-2m} \langle (\Delta^m\psi)_{x,r} , f_k\rangle_{L^2(D)}.
\end{align*}
The preceding, Parseval's identity, and the change of variable $z=(y-x)/r$ then imply that
\begin{align*}
	&\sum_{k=1}^\infty \lambda_k^{m} \left| \langle \psi_{x,r}, f_k \rangle_{L^2(D)} \right|^2
	= \sum_{k=1}^\infty \langle \psi_{x,r}, f_k \rangle_{L^2(D)} \cdot \langle \psi_{x,r}, \lambda_k^m f_k \rangle_{L^2(D)}\\
	&= (-1)^m r^{-2m} \sum_{k=1}^\infty \langle \psi_{x,r}, f_k \rangle_{L^2(D)} \cdot \langle (\Delta^m\psi)_{x,r}, f_k \rangle_{L^2(D)}\\
	&= (-1)^m r^{-2m} \langle \psi_{x,r} , (\Delta^m \psi)_{x,r} \rangle_{L^2(D)}
	=(-1)^m r^{-2d-2m} \int_D \psi(\tfrac{y-x}{r}) (\Delta^m \psi)(\tfrac{y-x}{r}) \, \d y\\
	& \le r^{-d-2m} \int_{\R^d} |\psi(z)| |\Delta^m \psi (z)| \, \d z = C_{m,\psi}\, r^{-d-2m}.
\end{align*}

Case 2: $\beta \not\in \N_0$. In this case, we can find $m \in \N_0$ such that $m < \beta < m+1$.
Take $p = 1/(m+1-\beta)>1$ and $q = 1/(\beta-m)>1$, so that $1/p+1/q=1$.
Set $\beta_1 = m/p$ and $\beta_2 = (m+1)/q$.
Note that $\beta_1+\beta_2 = \beta$.
Hence, we may apply H\"older's inequality and Case 1 to deduce that
\begin{align*}
	&\sum_{k=1}^\infty \lambda_k^{\beta} \left| \langle \psi_{x,r}, f_k \rangle_{L^2(D)} \right|^2\\
	&\le \left( \sum_{k=1}^\infty \lambda_k^m \left| \langle \psi_{x,r}, f_k \rangle_{L^2(D)} \right|^2\right)^{m+1-\beta} \left(\sum_{k=1}^\infty \lambda_k^{m+1} \left| \langle \psi_{x,r}, f_k \rangle_{L^2(D)} \right|^2\right)^{\beta-m}\\
	& \le \left(C_{m,\psi} \, r^{-d-2m} \right)^{m+1-\beta} \left( C_{m+1,\psi} \, r^{-d-2m-2} \right)^{\beta-m}
	= C_{\beta, \psi} r^{-d-2\beta}.
\end{align*}
This proves \eqref{SLND:claim:J}.
It follows that
\[
	J\lesssim r^{-d-2\alpha-6}.
\]
Now, we return to \eqref{SLND:I:CS} and use \eqref{SLND:I:value} to deduce that
\begin{align}\label{SLND:claim:proved}
    \E\Bigg[\bigg(u(t,x)-\sum_{j=1}^n a_j u(t_j,x_j)\bigg)^2\Bigg]
    \gtrsim
    r^{-2d-4} J^{-1}
    \gtrsim
    r^{2\alpha-d+2}
    =
    r^{2\theta},
\end{align}
uniformly for all $n \ge 1$, $(t,x),(t_1,x_1),\dots, (t_n,x_n) \in [0,T]\times D$ and $a_1,\dots, a_n \in \R$.
The factors $T$ and $R_D$ in \eqref{SLND:r} can be enlarged and factored out to give a lower bound.
Moreover, since
\[
    \rho((t,x),(t_j,x_j))
    \asymp
    \bigl(|t-t_j|^{1/2}\vee |x-x_j|\bigr)^{\theta},
\]
the implicit constant above can also be factored and absorbed by $C$ to obtain \eqref{SLND:claim}.
This completes the proof of the proposition.
\end{proof}


\subsection{Proof of Theorem \ref{thm:optimal}}

\begin{proof}
Under \eqref{Dalang}, the upper bound in \eqref{var:u-u} follows from Propositions \ref{prop:var:u}, \ref{prop:u-u:t}, \ref{prop:u-u:x}.
The lower bound in \eqref{var:u-u} follows from an application of \eqref{SLND:claim:proved} with $n=1$, $a_1=1$ and $(t_1,x_1)=(s,y)$, and another application of \eqref{SLND:claim:proved} with $(t,x)$ and $(s,y)$ swapped.
The SLND property on $[\delta,T]\times D'$ follows from Proposition \ref{prop:SLND} and the variance bound \eqref{var:u-u}, which implies that $\Var(u(t,x)-u(s,y)) \asymp \rho((t,x),(s,y))^2$ uniformly for all $(t,x),(s,y)\in [\delta,T]\times D'$ since $t,s$ and $d(x),d(y)$ are all bounded above and below.
\end{proof}

\section{Proofs of Theorems \ref{thm:moc}--\ref{thm:sbp}}\label{s:proofs}


\subsection{Harmonizable representation}

We use the framework of Lee and Xiao \cite{LX23} to prove Theorems \ref{thm:moc}--\ref{thm:sbp}. Their first assumption
requires an independently scattered Gaussian field indexed by Borel subsets
of \([0,\infty)\), together with quantitative estimates for its low- and
high-frequency parts. We construct this field from the temporal Fourier
transform of the heat kernel.

Let \(\{W_n^{(1)}\}_{n\ge1}\) and \(\{W_n^{(2)}\}_{n\ge1}\) be independent
sequences of real-valued centered Gaussian white noises on \(\R\), and set
\(W_n=W_n^{(1)}+iW_n^{(2)}\). Thus, for \(f,g\in L^2(\R;\mathbb C)\),
\[
    \E\left[\left(\Re\int_\R f\,\d W_n\right)
             \left(\Re\int_\R g\,\d W_m\right)\right]
    =\delta_{nm}\Re\int_\R f(\tau)\overline{g(\tau)}\,\d\tau,
\]
where $\Re$ denotes real part.
Define
\begin{align}\label{q}
    q_{n,t}(\tau)
    =\frac{\e^{-i\tau t}-\e^{-\lambda_n t}}{\lambda_n-i\tau}.
\end{align}
For any Borel set \(A\subset [0,\infty)\), define the centered Gaussian field
\begin{equation}\label{eq:harm}
    u(A,t,x)
    =\frac1{\sqrt{2\pi}}
    \sum_{n=1}^{\infty}\lambda_n^{-\alpha/2}f_n(x)
    \Re\int_{\{\tau \in \R:\,\lambda_n^{\theta/2}\vee|\tau|^{\theta/2}\in A\}}
    q_{n,t}(\tau)\,W_n(\d\tau).
\end{equation}
Since \(q_{n,t}\) is the Fourier transform of
\(r\mapsto\1_{(0,t)}(r)\e^{-\lambda_n(t-r)}\), Plancherel's theorem and Proposition \ref{prop:spec:rep} imply that, under condition \eqref{Dalang}, $\{u([0,\infty),t,x)\}_{t \ge 0, x \in D}$ has the same covariance, and hence the same law, as $\{ u(t,x) \}_{t \ge 0, x \in D}$.
Therefore, we may and will identify $u([0,\infty),\cdot) = u(\cdot)$ for the rest of the paper.
Following \cite{Balan12,DMX17,LX23}, we call \eqref{eq:harm} the harmonizable representation of $u$.

\begin{proposition}\label{prop:LX}
Suppose \(D\) is a bounded \(C^2\) domain and
\eqref{alpha:Holder:range} holds.
Let \(R=R_1 \times R_2\) be a
compact rectangle in $(0,\infty)\times D$. Write \(z=(z_0,\ldots,z_d)=(t,x_1,\ldots,x_d)\)
and set
\[
    \alpha_0=\theta/2,\qquad \alpha_i=\theta\ \, (1\le i\le d),
    \qquad
    \gamma_i=\alpha_i^{-1}-1\ \, (0\le i \le d).
\]
Then every \(\gamma_i\) is positive, and the Gaussian field \(u(A,z)\) from
\eqref{eq:harm} satisfies the following properties.

\begin{enumerate}
\item For every \(z\in R\), the map \(A\mapsto u(A,z)\) is an
independently scattered Gaussian noise,
\(u([0,\infty),z)=u(z)\), and the fields \(u(A,\cdot)\) and
\(u(B,\cdot)\) are independent for disjoint \(A,B\). 
Thus Assumption~2.1(a) of \cite{LX23} holds.\smallskip

\item There exists $C_R>0$ such that for every \(1\le a<b\le\infty\) and \(z,z'\in R\),
\begin{gather*}
\qquad\qquad \|u([a,b),z)-u(z)-u([a,b),z')+u(z')\|_2\le C_R\left[
   \sum_{i=0}^d a^{\gamma_i}|z_i-z'_i|+b^{-1}\right],\\
	\|u([0,1),z) - u([0,1),z')\|_2 \le C_R \sum_{i=0}^d |z_i-z_i'|,
\end{gather*}
with \(0^{\gamma_i}=0\) and \(\infty^{-1}=0\). So, Assumption~2.1(b) of \cite{LX23} holds with \(a_0=1\).\smallskip

\item Let \(z^0=(0,\dots, 0)\in\R^{d+1}\). There exists \(c_0>0\) such that
\[
    \Var\left(u(z)\mid u(z^1),\ldots,u(z^n)\right)
    \ge
    c_0\min_{0\le j\le n}\rho(z,z^j)^2
\]
for all \(n\ge1\) and \(z,z^1,\ldots,z^n\in R\). Thus
Assumption~2.2 of \cite{LX23} holds.\smallskip

\item There exist $c_1,c_2>0$ such that the metric $d_u(z,z') := \|u(z)-u(z')\|_2$ satisfies
\[
    c_1\rho(z,z')\le d_u(z,z')\le c_2\rho(z,z'),
    \qquad \forall z,z'\in R,
\]
so Assumption~2.3 of \cite{LX23} holds.
\end{enumerate}
%
\end{proposition}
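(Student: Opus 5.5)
The plan is to check the four items in order. Items (1), (3) and (4) follow quickly from the construction and from earlier results. Item (2) is the real work: a frequency-by-frequency variance computation using the spectral sums of Lemmas~\ref{lem:spectral:1}--\ref{lem:spectral:3}. First, positivity of $\gamma_i$ is immediate, since $0<\theta<1$ under \eqref{alpha:Holder:range} gives $\alpha_i\in(0,1)$.

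\emph{Items (1), (3), (4).} For (1), fix $z=(t,x)$. The map $A\mapsto u(A,z)$ is a sum over $n$ of white-noise integrals over the sets $E_n(A)=\{\tau:\lambda_n^{\theta/2}\vee|\tau|^{\theta/2}\in A\}$. Disjoint $A,B$ give disjoint $E_n(A),E_n(B)$ for every $n$. Since the $W_n$ are independent complex white noises, the families $u(A,\cdot)$ and $u(B,\cdot)$ are uncorrelated jointly Gaussian, hence independent. Additivity over disjoint unions is clear. The identification $u([0,\infty),z)=u(z)$ was already made via Plancherel and Proposition~\ref{prop:spec:rep}. For (3), $R$ is compact in $(0,\infty)\times D$, so $t^\theta\wedge d(x)^{2\theta}\ge c_R>0$ on $R$, while $\rho(z,z^0)=|t|^{\theta/2}+\sum|x_i|^\theta$ is bounded above on $R$. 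Hence $t^\theta\wedge d(x)^{2\theta}\gtrsim\rho(z,z^0)^2$, and Proposition~\ref{prop:SLND} gives
\[
\Var(u(z)\mid u(z^1),\dots,u(z^n))\gtrsim \min_{1\le j\le n}\rho(z,z^j)^2\wedge\rho(z,z^0)^2 .
\]
For (4), apply \eqref{var:u-u} from Theorem~\ref{thm:optimal}. On $R$ the truncation term $(\sqrt t\wedge d(x))^\theta$ is bounded below while $\rho$ is bounded above, so $d_u\asymp\rho$.

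\emph{Item (2), reduction.} Write $A=a^{2/\theta}$ and $B=b^{2/\theta}$. Then $u([a,b),z)-u(z)=-u([0,a),z)-u([b,\infty),z)$, so it suffices to treat three pieces.
\begin{itemize}
\item[(i)] The increment of the low part, over the set where $\lambda_n<A$ and $|\tau|<A$, bounded by $\sum_i a^{\gamma_i}|z_i-z_i'|$.
\item[(ii)] The high part $u([b,\infty),z)$ at a single point, bounded by $b^{-1}$; the increment then follows by the triangle inequality.
\item[(iii)] The case $a=1$, $b=\infty$ of (i), which is the second displayed inequality.
\end{itemize}
Every variance has the form
\[
\tfrac1{2\pi}\sum_n\lambda_n^{-\alpha}\int|q_{n,t}f_n(x)-q_{n,t'}f_n(x')|^2\,\d\tau .
\]

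\emph{Item (2), high part.} We use $|q_{n,t}|^2\le 4/(\lambda_n^2+\tau^2)$. On $\{\lambda_n\ge B\}$, integrating in $\tau$ gives $\lambda_n^{-1}$. Lemma~\ref{lem:spectral:2} with exponent $\alpha+1>d/2$ then gives
\[
\sum_{\lambda_n\ge B}\lambda_n^{-\alpha-1}|f_n(x)|^2\lesssim B^{d/2-\alpha-1}=B^{-\theta}.
\]
On $\{\lambda_n<B,\ |\tau|\ge B\}$, the $\tau$-integral is $\lesssim B^{-1}$, and Lemma~\ref{lem:spectral:1} with $\alpha<d/2$ gives $\sum_{\lambda_n<B}\lambda_n^{-\alpha}|f_n|^2\lesssim B^{d/2-\alpha}$. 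The product is again $B^{-\theta}$. So the standard deviation is $\lesssim B^{-\theta/2}=b^{-1}$.

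\emph{Item (2), low part, time increment.} We have
\[
\partial_tq_{n,t}=\frac{-i\tau e^{-i\tau t}+\lambda_ne^{-\lambda_nt}}{\lambda_n-i\tau},
\]
which has modulus at most $\sqrt2$, so $|q_{n,t}-q_{n,t'}|\le\sqrt2|t-t'|$. The $\tau$-integral over $|\tau|<A$ contributes a factor $A$, and Lemma~\ref{lem:spectral:1} bounds the remaining spectral sum. This gives variance $\lesssim|t-t'|^2A^{1+d/2-\alpha}=|t-t'|^2A^{2-\theta}$. The standard deviation is therefore $\lesssim a^{2/\theta-1}|t-t'|=a^{\gamma_0}|t-t'|$.

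\emph{Item (2), low part, space increment.} I expect this to be the main obstacle, because the eigenfunctions are not uniformly Lipschitz. The plan is to use $\int_\R|q_{n,t}|^2\,\d\tau\lesssim\lambda_n^{-1}$. First take $|x-x'|\le\varepsilon_0$ and use the curve $\gamma$ of Lemma~\ref{lem:curve} with Cauchy--Schwarz:
\[
|f_n(x)-f_n(x')|^2\le C_0^2|x-x'|^2\int_0^1|\nabla f_n(\gamma(s))|^2\,\d s .
\]
Lemma~\ref{lem:spectral:3} holds uniformly over all points of $D$, so it applies at every point of the curve. With exponent $\alpha+1<d/2+1$ it gives
\[
\sum_{\lambda_n<A}\lambda_n^{-\alpha-1}|\nabla f_n(\gamma(s))|^2\lesssim A^{d/2-\alpha}=A^{1-\theta}.
\]
The standard deviation is therefore $\lesssim|x-x'|A^{(1-\theta)/2}=a^{\gamma_i}|x-x'|$. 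For $|x-x'|>\varepsilon_0$, join $x$ and $x'$ by a chain of boundedly many points in a compact neighbourhood of $R_2$ inside $D$ with consecutive gaps at most $\varepsilon_0$, and apply the triangle inequality.

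\emph{Item (2), the case $a=1$.} Repeating (i) with $A=1$ gives the second inequality, with constants depending only on $R$.
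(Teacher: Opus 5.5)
Your proposal is correct and follows the paper's proof essentially step by step. It uses the same splitting $u([a,b),z)-u(z)=-u([0,a),z)-u([b,\infty),z)$, the same high-frequency bound via Lemmas~\ref{lem:spectral:1} and \ref{lem:spectral:2}, the same low-frequency time/space split via Lemma~\ref{lem:curve} and Lemma~\ref{lem:spectral:3}, and the same reduction of items (1), (3), (4) to the construction, Proposition~\ref{prop:SLND} and \eqref{var:u-u}. The only differences are cosmetic: you bound $|q_{n,t}-q_{n,t'}|$ through $|\partial_t q_{n,t}|\le\sqrt2$ and handle $|x-x'|>\varepsilon_0$ by chaining, whereas the paper uses the elementary increment bounds for $e^{-i\tau t}$ and $e^{-\lambda_n t}$, and disposes of large separations via $\|u([0,a),z)\|_2\le\|u(z)\|_2\lesssim 1$ together with $a\ge 1$.
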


\begin{proof}
First, under condition \eqref{alpha:Holder:range}, \(0<\theta<1\), so
\(\gamma_0=(\theta/2)^{-1}-1>0\) and
\(\gamma_i=\theta^{-1}-1>0\) for \(1\le i\le d\).
Item (1) follows from the definition \eqref{eq:harm} and the fact that $W_n$ are independent Gaussian noises. 

To prove item (2), we claim that there exists $C>0$ such that
\begin{align*}
	&\|u([a,b),t,x)-u(t,x)-u([a,b),s,y)+u(s,y)\|_{2}\\
	&\hskip1.5in \le C\bigl(a^{\gamma_0}|t-s|+a^{\gamma_1}|x-y|+b^{-1}\bigr),\\
	& \|u([0,1),t,x) - u([0,1),s,y)\|_2 \le C (|t-s|+|x-y|)
\end{align*}
for all $1\le a <b \le \infty$ and $(t,x),(s,y) \in R$.

The following identities will be used throughout the proof:
\[
    \frac{2}{\theta}(1+d/2-\alpha)=2\gamma_0,
    \qquad
    \frac{2}{\theta}(d/2-\alpha)=2\gamma_1,
    \qquad
    \frac{2}{\theta}(d/2-\alpha-1)=-2 .
\]
Since
\[
\begin{aligned}
&u([a,b),t,x)-u(t,x)-u([a,b),s,y)+u(s,y)  \\
&\quad =
    -\bigl(u([0,a),t,x)-u([0,a),s,y)\bigr)
    -\bigl(u([b,\infty),t,x)-u([b,\infty),s,y)\bigr),
\end{aligned}
\]
its $\|\cdot\|_2$ norm is bounded by
\[
    I_a+J_b(t,x)+J_b(s,y),
\]
where
\[
    I_a=\|u([0,a),t,x)-u([0,a),s,y)\|_2,
    \qquad
    J_b(t,x)=\|u([b,\infty),t,x)\|_2 .
\]

We first estimate $J_b(t,x)$. If \(b=\infty\), then \(J_b=0\).
Let \(1<b<\infty\) and set
\[
	\Lambda_b=b^{2/\theta}.
\] 
By Wiener isometry and
\(|\e^{-i\tau r}-\e^{-\lambda_n r}|\le2\),
\[
    |J_b(t,x)|^2
    \lesssim
    \sum_{n=1}^\infty
    \lambda_n^{-\alpha}|f_n(x)|^2
    \int_{\{\tau:\lambda_n^{\theta/2}\vee|\tau|^{\theta/2}\ge b\}}
    \frac{\d\tau}{\lambda_n^2+\tau^2}.
\]
If \(\lambda_n<\Lambda_b\), then the domain of integration is contained in
\(\{\tau: |\tau|\ge\Lambda_b\}\), and
\[
\begin{aligned}
    \sum_{n \ge 1:\lambda_n<\Lambda_b}
    \lambda_n^{-\alpha}|f_n(x)|^2
    \int_{|\tau|\ge\Lambda_b}
    \frac{\d\tau}{\lambda_n^2+\tau^2}
    &\lesssim
    \Lambda_b^{-1}
    \sum_{n \ge 1: \lambda_n<\Lambda_b}\lambda_n^{-\alpha}|f_n(x)|^2  \\
    &\lesssim
    \Lambda_b^{d/2-\alpha-1}
    =
    b^{-2},
\end{aligned}
\]
thanks to Lemma~\ref{lem:spectral:1}. If \(\lambda_n\ge\Lambda_b\), then the domain of integration is $\R$ and
\[
    \int_\R\frac{\d\tau}{\lambda_n^2+\tau^2}=\frac{\pi}{\lambda_n}.
\]
Thus by Lemma \ref{lem:spectral:2},
\begin{align*}
    \sum_{n \ge 1: \lambda_n\ge\Lambda_b}
    \lambda_n^{-\alpha}|f_n(x)|^2
    \int_\R\frac{\d\tau}{\lambda_n^2+\tau^2}
    &\lesssim
    \sum_{n \ge 1:\lambda_n\ge\Lambda_b}
    \lambda_n^{-(\alpha+1)}|f_n(x)|^2\\
    &\lesssim
    \Lambda_b^{d/2-\alpha-1}
    =
    b^{-2}.
\end{align*}
Consequently, \(J_b(t,x)\lesssim b^{-1}\) for all $1<b\le \infty$ and $(t,x) \in R$.

Next, we prove that $I_a \lesssim a^{\gamma_0}|t-s| + a^{\gamma_1} |x-y|$ for all $a \ge 1$ and $(t,x),(s,y) \in R$.
Note that it is enough to prove this for all $|t-s| \le \varepsilon_0$ and $|x-y| \le \varepsilon_0$, where $\varepsilon_0>0$ is a fixed number, because $\|u([0,a),t,x)\|_2 \le \|u(t,x)\|_2$ and Lemma  \ref{lem:var:u:1} implies that $\|u(t,x)\|_2$ is uniformly bounded.
Let $a \ge 1$ and set 
\[
	\Lambda_a=a^{2/\theta}.
\]
Recall \eqref{q}. 
By Wiener isometry,
\[
    I_a^2
    \lesssim
    \sum_{n \ge 1: \lambda_n<\Lambda_a}
    \lambda_n^{-\alpha}
    \int_{|\tau|<\Lambda_a}
    |f_n(x)q_{n,t}(\tau)-f_n(y)q_{n,s}(\tau)|^2\,\d\tau .
\]
Split the last expression as \(A_1+A_2\), where
\[
    A_1:=
    \sum_{n \ge 1: \lambda_n<\Lambda_a}
    \lambda_n^{-\alpha}|f_n(x)|^2
    \int_{|\tau|<\Lambda_a}|q_{n,t}(\tau)-q_{n,s}(\tau)|^2\,\d\tau,
\]
and
\[
    A_2:=
    \sum_{n \ge 1: \lambda_n<\Lambda_a}
    \lambda_n^{-\alpha}|f_n(x)-f_n(y)|^2
    \int_{|\tau|<\Lambda_a}|q_{n,s}(\tau)|^2\,\d\tau .
\]
Since $|\e^{-i\tau t}-\e^{-i\tau s}|\le|\tau||t-s|$ and $|\e^{-\lambda_n t}-\e^{-\lambda_n s}|\le\lambda_n|t-s|$,
we have 
\[
	|q_{n,t}(\tau)-q_{n,s}(\tau)|\lesssim |t-s|.
\]
Hence, by Lemma \ref{lem:spectral:1},
\[
    A_1
    \lesssim
    |t-s|^2\Lambda_a
    \sum_{n \ge 1: \lambda_n<\Lambda_a}\lambda_n^{-\alpha}|f_n(x)|^2
    \lesssim
    |t-s|^2\Lambda_a^{1+d/2-\alpha}
    =
    a^{2\gamma_0}|t-s|^2 .
\]
Also, we have
\[
	|q_{n,s}(\tau)|\le \frac2{(\lambda_n^2+\tau^2)^{1/2}}
	\quad \text{and}\quad 
	\int_\R |q_{n,s}(\tau)|^2\,\d\tau\lesssim\lambda_n^{-1}.
\]
Thus,
\[
    A_2
    \lesssim
    \sum_{n \ge 1: \lambda_n<\Lambda_a}
    \lambda_n^{-(\alpha+1)}|f_n(x)-f_n(y)|^2 .
\]
Let $\varepsilon_0 \in (0,1)$ and $C_0>0$ be the constants given by Lemma \ref{lem:curve}.
By that lemma, for any $x,y \in R_2$ with $|x-y|\le\varepsilon_0$, we can find a $C^2$ curve $\gamma:[0,1] \to D$ such that $\gamma(0)=x$, $\gamma(1) = y$ and 
\[
	|\gamma'(r)| \le C_0|x-y| \quad \text{for all $r \in [0,1]$.}
\]
Then, by the fundamental theorem of calculus and Cauchy-Schwarz inequality,
\begin{align*}
    |f_n(x)-f_n(y)|^2 
    &=
    \left|\int_0^1 \nabla f_n(\gamma(r))\cdot\gamma'(r)\,\d r\right|^2
    \le C_0^2 |x-y|^2 \int_0^1 |\nabla f_n(\gamma(r))|^2\,\d r.
\end{align*}
This together with Lemma~\ref{lem:spectral:3} yields
\begin{align*}
    A_2
    &\lesssim
    |x-y|^2 \int_0^1
    \sum_{n \ge 1: \lambda_n<\Lambda_a}
    \lambda_n^{-(\alpha+1)}|\nabla f_n(\gamma(r))|^2\,\d r\\
    &\lesssim
    |x-y|^2\Lambda_a^{d/2-\alpha}
    =
    a^{2\gamma_1}|x-y|^2 .
\end{align*}
Combining \(A_1\) and \(A_2\), we have
\[
    I_a
    \lesssim
    a^{\gamma_0}|t-s|+a^{\gamma_1}|x-y|.
\]
Since \(|x-y|\asymp \sum_{i=1}^d|x_i-y_i|\), the last display and the
previous estimates for $J_b$ give exactly the first bound in item (2). 
The estimate for $I_a$ above with $a=1$ gives the second bound.
This proves item (2).

Item (3) follows from Proposition \ref{prop:SLND} and
$\inf_{(t,x) \in R}[t^\theta \wedge (d(x))^{2\theta}] > 0$ because the compact rectangle $R \subset (0,\infty) \times D$ has a strictly positive distance from $\{0\} \times D$ and $[0,T] \times \partial D$.
Finally, item (4) follows from \eqref{var:u-u}.
\end{proof}

\subsection{Proof of Theorem~\ref{thm:moc}}

\begin{proof}
Fix $z_0 \in (0,\infty) \times D$.
Choose a compact rectangle \(R \subset (0,\infty) \times D\) with \(z_0\) in its interior. Proposition~\ref{prop:LX}
verifies Assumptions~2.1 and 2.3 of \cite{LX23}. Therefore, Theorem~5.2 of 
\cite{LX23} applies to $\{u(t,x)\}_{(t,x) \in R}$ and gives the asserted exact local
modulus with respect to the metric \(\rho\), with a constant $K_0$ that is finite and strictly positive.
\end{proof}

\subsection{Proof of Theorem~\ref{thm:umoc}}

We first recall the notion of metric entropy.
Let $d_u(z,z') = \|u(z)-u(z')\|_2$ denote the canonical metric for $u$ on $[0,\infty) \times D$.
For any set $A \subset [0,\infty)\times D$, let $N(A,r)$ be the entropy number, defined as the smallest number of $d_u$-balls of radius $r$ needed to cover $A$.

\begin{lemma}\label{lem:N}
Suppose $D\subset \R^d$ is a bounded $C^2$ domain and \eqref{alpha:Holder:range} holds.
Fix $T>0$. Then, there exists $C>0$ such that
\begin{align*}
	N([0,T]\times D, r) \le Cr^{-Q} \quad \forall r \in (0,1],
\end{align*}
where $Q$ is given by \eqref{eq:Q}.
\end{lemma}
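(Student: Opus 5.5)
The plan is to compare $d_u$ with $\rho$ and then count $\rho$-balls. By the upper bound in \eqref{var:u-u} of Theorem \ref{thm:optimal}, there is $c_2>0$ with
\[
	d_u(z,z') \le c_2\,\rho(z,z') \qquad \text{for all } z,z'\in[0,T]\times D .
\]
This upper bound holds on the whole closed set $[0,T]\times D$, including near $t=0$ and near $\partial D$. We use only this direction, since the minimum on the right of \eqref{var:u-u} is at most $\rho$. Consequently every $\rho$-ball of radius $r/c_2$ is contained in a $d_u$-ball of radius $r$ with the same center. It therefore suffices to cover $[0,T]\times D$ by $O(r^{-Q})$ sets of $\rho$-diameter at most $r/c_2$, each containing a point of $[0,T]\times D$ to serve as a center. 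Covering by sets of $\rho$-diameter $r/c_2$ around a point of the set gives balls of radius $r/c_2$, which is enough.

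For the covering, use a grid. Since $D$ is bounded, $D\subset[-M,M]^d$ for some $M$. Partition $[0,T]$ into intervals of length $\delta_0 := (r/(2c_2))^{2/\theta}$, and partition each coordinate $x_i$ into intervals of length $\delta_1 := (r/(2dc_2))^{1/\theta}$. Within a single grid box $B$, any two points $z,z'$ satisfy
\[
	\rho(z,z') \le \delta_0^{\theta/2} + d\,\delta_1^{\theta} \le \frac{r}{c_2},
\]
by the definition \eqref{eq:rho}. Discard the boxes that miss $[0,T]\times D$, and pick one point of $[0,T]\times D$ in each remaining box as a center. The number of boxes is at most
\[
	\Bigl(\tfrac{T}{\delta_0}+1\Bigr)\Bigl(\tfrac{2M}{\delta_1}+1\Bigr)^{d} \lesssim r^{-2/\theta}\,r^{-d/\theta} = r^{-Q}
\]
for $r\in(0,1]$, where the last equality is \eqref{eq:Q}.

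There is no real obstacle here. The only point to check is that the upper variance bound in Theorem \ref{thm:optimal} is uniform up to $t=0$ and up to the boundary. It is, because it comes from Propositions \ref{prop:u-u:t} and \ref{prop:u-u:x}, which hold uniformly on $[0,T]\times D$.
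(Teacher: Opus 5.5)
Your proof is correct, and it takes a different route from the paper's. The paper uses the full two-sided comparison \eqref{d_u:bd} and splits $[0,T]\times D$ into two parts. The first is the layer $F_r$ near $\{t=0\}$ and $\partial D$; it fits inside a single $d_u$-ball of radius $r$ because the right-hand side of \eqref{var:u-u} degenerates there. The second is the interior part $E_r$. There the paper takes a maximal family of disjoint $d_u$-balls of radius $r/2$ and bounds their number by a volume count. Each such ball contains a $\rho$-ball of Lebesgue measure $\asymp r^{Q}$, which is not truncated by $\partial D$ or $\{t=0\}$ because its center lies in $E_r$. You instead use only the upper bound $d_u\le c_2\rho$, which holds on all of $[0,T]\times D$ by Propositions \ref{prop:u-u:t} and \ref{prop:u-u:x}, or by \eqref{var:u-u} since the minimum there is at most $\rho$. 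You then cover by an explicit anisotropic grid with side lengths $\asymp r^{2/\theta}$ in time and $\asymp r^{1/\theta}$ in each space coordinate. The boundary layer then needs no separate treatment: the boxes meeting it are simply among the $O(r^{-Q})$ boxes in $[0,T]\times[-M,M]^d$. Your argument is shorter and more elementary. It needs neither the degeneracy of the variance at $t=0$ and $\partial D$ nor the bookkeeping of the packing step (disjointness, confinement of the balls to a bounded region, non-truncation of the $\rho$-balls). The paper's decomposition exploits more of the structure of $u$ than an upper bound of order $r^{-Q}$ requires, and it yields nothing sharper for this lemma.
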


\begin{proof}
For any $r>0$, define
\[
	D_{r} := \{ x \in D: d(x) > r^{1/\theta} \},
\]
where $\theta$ and $d(x)$ are given by \eqref{theta} and \eqref{d(x)}, respectively.
By Theorem \ref{thm:optimal}, there exist $c_0,c_1>0$ such that
\begin{align}\begin{split}\label{d_u:bd}
	&c_0 \left[ \rho((t,x),(s,y)) \wedge \left( (\sqrt{t} \wedge d(x))^\theta \vee (\sqrt{s} \wedge d(y))^\theta \right) \right]\\
	&\le d_u((t,x),(s,y)) \le c_1 \left[ \rho((t,x),(s,y)) \wedge  \left( (\sqrt{t} \wedge d(x))^\theta \vee (\sqrt{s} \wedge d(y))^\theta \right)  \right]
\end{split}\end{align}
for all $(t,x), (s,y) \in [0,T] \times D$.
For any $r \in (0,r_0)$, we write $[0,T]\times D = E_r \cup F_r$, where
\[
	E_r = [(r/c_1)^{2/\theta}, T] \times D_{r/c_1} \quad \text{and} \quad 
	F_r = ([0,T]\times D) \setminus E_r.
\]
By \eqref{d_u:bd}, $F_r$ can be covered by a single $d_u$-ball of radius $r$, thus \[
	N(F_r, r) = 1.
\]
To estimate $N(E_r,r)$, let $B_1,\dots, B_p$ be a maximal set of pairwise disjoint $d_u$-balls of radius $r/2$ with centers in $E_r$. 
Maximality implies that the balls $2B_1,\dots, 2B_p$ form a cover for $E_r$, where $2B_i$ is the $d_u$-ball with radius $r$ and the same center as $B_i$.
Hence, $N(E_r,r) \le p$.
Since $D$ is a bounded domain and $B_1,\dots, B_p$ are pairwise disjoint, their total volume $\sum_{i=1}^p \mathrm{Vol}(B_i)$ is uniformly bounded.
By \eqref{d_u:bd}, $N(E_r,r) r^Q \le p r^Q\lesssim \sum_{i=1}^p \mathrm{Vol}(B_i) = O(1)$, hence we can find $C_0>0$ such that
\[
	N(E_r,r) \le p \le C_0 r^{-Q}.
\]
Therefore, we have $N([0,T]\times D, r) \le 1 + C_0 r^{-Q} \lesssim r^{-Q}$ for all $r \in (0,1]$.
\end{proof}

\begin{proof}[Proof of Theorem~\ref{thm:umoc}]
First, by \eqref{var:u-u}, we have
\[
	\lim_{\varepsilon\to0^+} \sup_{\substack{z,z'\in [0,T]\times D\\ 0<\rho(z,z') \le \varepsilon}} \frac{d_u(z,z')}{\rho(z,z') \sqrt{\log(1/\rho(z,z'))}} = 0.
\]
Hence, we may use Lemma 7.1.1 of \cite{MR} to deduce that \eqref{umoc} holds for some constant $K_1 \in [0,\infty]$.
It remains to show that $0<K_1<\infty$.

To show that $K_1<\infty$, we apply Theorem 1.3.5 of \cite{AT} to see that there exist a universal constant $K>0$ and a random variable $\eta>0$ such that a.s., for all $\varepsilon \in (0,\eta]$,
\begin{align*}
	\sup_{\substack{z,z'\in [0,T]\times D\\\rho(z,z') \le \varepsilon}} |u(z)-u(z')| \le K \int_0^\varepsilon \sqrt{\log N([0,T]\times D, r)} \, \d r.
\end{align*}
By Lemma \ref{lem:N}, $N([0,T]\times D, r) \le C r^{-Q}$ for all $r \in (0,1]$.
Hence, we may let $\varepsilon_n = e^{-n}$ and proceed as in the proof of Theorem 3.13 of \cite{HL26} to deduce that there exist constants $C_1, C_2 \in (0,\infty)$ such that a.s., for all $n$ large,
\[
	\sup_{\substack{z,z'\in [0,T]\times D\\\rho(z,z') \le \varepsilon_n}} |u(z)-u(z')| 
	\le C_1 \varepsilon_n \sqrt{\log(1/\varepsilon_n)}
\]
and
\[
	\lim_{n \to \infty} \sup_{\substack{z,z'\in [0,T]\times D\\\rho(z,z') \le \varepsilon_n}} \frac{|u(z)-u(z')|}{\rho(z,z')\sqrt{\log(1/\rho(z,z'))}} \le C_2 \quad \text{a.s.}
\]
This implies that $K_1 \le C_2 <\infty$.

To show that $K_1>0$, we choose and fix any compact rectangle $R \subset (0,T] \times D$.
By Proposition \ref{prop:LX}, $\{u(t,x)\}_{(t,x) \in R}$ satisfies Assumptions~2.1--2.3 of \cite{LX23}.
Hence, Theorem~6.1 of \cite{LX23} implies that
\[
    \lim_{\varepsilon\to0^+}
    \sup_{\substack{z,z'\in R\\0<\rho(z,z')\le\varepsilon}}
    \frac{|u(z)-u(z')|}
    {\rho(z,z')\sqrt{\log(1/\rho(z,z'))}}
    =
    C
    \qquad\text{a.s.}
\]
for some constant $C \in (0,\infty)$.
Clearly, we have $K_1 \ge C>0$.
This completes the proof of Theorem \ref{thm:umoc}.
\end{proof}

\subsection{Proof of Theorem~\ref{thm:lil}}

\begin{proof}
Fix \(z_0\in(0,\infty)\times D\) and choose a compact rectangle
\(R \subset (0,\infty) \times D\) with \(z_0\) in its interior. 
Under \eqref{alpha:Holder:range}, Proposition~\ref{prop:LX} shows that $\{u(t,x)\}_{(t,x) \in R}$ satisfies Assumptions~2.1 and 2.2 of 
\cite{LX23} with \(\alpha_0=\theta/2\) and
\(\alpha_i=\theta\) for \(1\le i\le d\).
Therefore, we may apply Theorem~4.4 of \cite{LX23} to obtain the asserted Chung's law of the iterated logarithm with a  constant in \((0,\infty)\). 
\end{proof}

\subsection{Proof of Theorem~\ref{thm:sbp}}


%

\begin{proof}[Proof of Theorem \ref{thm:sbp}]
By Lemma 2.2 of Talagrand \cite{Talagrand} (see also \cite[Lemma 3.4]{DLMX} for a more precise statement) and Lemma \ref{lem:N}, there exists $K>0$ such that 
\[
	\P\left\{ \sup_{z,z' \in [0,T]\times D}|u(z)-u(z')| \le \varepsilon \right\} \ge e^{-K/\varepsilon^Q}\qquad
	\forall \varepsilon \in (0,1].
\]
Fix $t_0 \in (0,T]$ and $x_0 \in D$.
Thanks to the Gaussian correlation inequality and the preceding, we can find $K_3>0$ such that
\begin{align*}
	\P\left\{ \sup_{z \in [0,T]\times D}|u(z)| \le \varepsilon \right\}
	&\ge \P\left\{ |u(t_0,x_0)| \le \frac{\varepsilon}{2} \right\} \, \P\left\{ \sup_{z,z' \in [0,T]\times D}|u(z)-u(z')| \le \frac{\varepsilon}{2} \right\} \\
	& \ge \P\left\{ |Z| \le \frac{\varepsilon}{2\|u(t_0,x_0)\|_2} \right\} e^{-K2^Q/\varepsilon^Q}
	\ge e^{-K_3/\varepsilon^Q}
\end{align*}
for all $\varepsilon \in (0,1]$, where $Z$ has a standard normal distribution, and we have used $\Var(u(t,x)) \asymp t^{\theta} \wedge (d(x))^{2\theta}$, which follows from  \eqref{var:u-u}, and $\P\{|Z| \le a \} \ge C_A a$ for $|a| \le A$ to obtain the last inequality. This proves the lower bound in Theorem \ref{thm:sbp}.

To show the upper bound, we fix a compact rectangle $R \subset (0,T] \times D$.
By Theorem \ref{thm:optimal}, $\|u(z)-u(z')\|_2 \asymp \rho(z,z')$ uniformly for all $z,z' \in R$ and $\{u(z)\}_{z \in R}$ satisfies the SLND property.
Hence, we may apply Theorem 5.1 of \cite{X09} to find $K_4>0$ such that
\[
	\P\left\{ \sup_{z \in R} |u(z)| \le \varepsilon \right\} \le e^{-K_4 \varepsilon^{-Q}}
\]
uniformly for all $\varepsilon \in (0,1]$.
This implies the upper bound in Theorem \ref{thm:sbp}.
\end{proof}

{\bf Acknowledgments.}
C.Y. Lee was supported in part by the Shenzhen Peacock grant
2025TC0013.

\bibliographystyle{plain}
\bibliography{SHE}

\end{document}